\theoremstyle{plain}
\newtheorem{theorem}{Theorem}[section]
\theoremstyle{remark}
\newtheorem{remark}[theorem]{Remark}
\newtheorem{example}[theorem]{Example}
\theoremstyle{plain}
\newtheorem{corollary}[theorem]{Corollary}
\newtheorem{lemma}[theorem]{Lemma}
\newtheorem{proposition}[theorem]{Proposition}
\newtheorem{definition}[theorem]{Definition}
\newtheorem{problem}[theorem]{Problem}
\numberwithin{equation}{section}
\def\N{{\mathbb N}}
\def\Z{{\mathbb Z}}
\def\R{{\mathbb R}}
\def\C{{\mathbb C}}
\newcommand{\E}{{\mathbb E}}
\renewcommand{\P}{{\mathbb P}}
\newcommand{\mF}{{\mathbb F}}
\newcommand{\F}{{\mathscr F}}
\newcommand{\A}{{\mathcal A}}
\renewcommand{\H}{{\mathbb H}}
\newcommand{\g}{\gamma}
\renewcommand{\a}{\kappa}
\newcommand{\om}{\omega}
\renewcommand{\O}{\Omega}
\newcommand{\angH}{\omega_{H^{\infty}}}
\renewcommand{\Re}{\hbox{\rm Re}\,}
\newcommand{\D}{{\mathscr D}}
\newcommand{\calL}{{\mathscr L}}
\newcommand{\Sc}{{\mathscr S}}
\newcommand{\one}{{{\bf 1}}}
\newcommand{\dps}{\displaystyle}
\newcommand{\wh}{\widehat}
\newcommand{\supp}{\text{\rm supp\,}}
\newcommand{\Four}{\mathcal{F}}
\newcommand{\wt}{\widetilde}
\renewcommand{\tilde}{\widetilde}
\newcommand{\Tr}{\mathtt{Tr}_0}
\renewcommand{\S}{\mathscr{S}}
\newcommand{\Id}{I}
\newcommand{\Sch}{\mathscr{S}}
\newcommand{\Ext}{\mathsf{E}_I}
\newcommand{\TD}{{\mathscr S'}}
\newcommand{\loc}{\rm loc}
\def\Xint#1{\mathchoice
   {\XXint\displaystyle\textstyle{#1}}%
   {\XXint\textstyle\scriptstyle{#1}}%
   {\XXint\scriptstyle\scriptscriptstyle{#1}}%
   {\XXint\scriptscriptstyle\scriptscriptstyle{#1}}%
   \!\int}
\def\XXint#1#2#3{{\setbox0=\hbox{$#1{#2#3}{\int}$}
     \vcenter{\hbox{$#2#3$}}\kern-.5\wd0}}
\newcommand{\Do}{{\mathsf{D}}}
\newcommand{\Ran}{{\mathsf{R}}}
\newcommand{\Dd}{\dot{\mathsf{D}}}
\newcommand{\norm}[1]{{\left\vert\kern-0.25ex\left\vert\kern-0.25ex\left\vert #1
    \right\vert\kern-0.25ex\right\vert\kern-0.25ex\right\vert}}
\newcommand{\Wz}{\prescript{}{0}{W}}
\newcommand{\Hz}{\prescript{}{0}{H}}
\newcommand{\M}{\mathfrak{T}}
\newcommand{\Co}{\mathscr{C}}
\newcommand{\Fsigma}{\mathfrak{P}}
\newcommand{\Der}{\mathfrak{D}}
\newcommand{\Dert}{\tilde{\mathfrak{D}}}
\newcommand{\Restr}{\mathsf{R}}
\newcommand{\Restrepsilon}{\Restr^{\varepsilon}}
\newcommand{\Restrkepsilon}{\Restr_{k}^{\varepsilon}}
\begin{document}

\title[Trace embedding and its applications]{On the trace embedding and its applications to evolution equations}

\author{Antonio Agresti}
\address{Institute of Science and Technology Austria (IST Austria)\\
Am Campus 1\\
3400 Klosterneuburg\\
Austria}
\email{antonio.agresti92@gmail.com}

\author{Nick Lindemulder}
\address{Institute of Analysis \\
Karlsruhe Institute of Technology \\
Englerstraße 2 \\
76131 Karlsruhe\\
Germany}
\email{n.lindemulder@hotmail.com}

\author{Mark Veraar}
\address{Delft Institute of Applied Mathematics\\
Delft University of Technology \\ P.O. Box 5031\\ 2600 GA Delft\\The
Netherlands} \email{M.C.Veraar@tudelft.nl}

\subjclass[2010]{Primary: 46E35; Secondary: 35B65, 35K90, 45N05, 46E40, 47D06, 60H15}

\keywords{Traces, anisotropic function spaces, weighted function spaces, Besov spaces, Triebel-Lizorkin spaces, Bessel-potential spaces, Sobolev spaces, integral equations, stochastic maximal regularity}

\thanks{
The first author has been partially supported by the Nachwuchsring -- Network for the promotion of young scientists -- at TU Kaiserslautern.
The second and third author were supported by the Vidi subsidy 639.032.427 of the Netherlands Organisation for Scientific Research (NWO)}

\date{\today}

\begin{abstract}
In this paper we consider traces at initial times for functions with mixed time-space smoothness. Such results are often needed in the theory of evolution equations. Our result extends and unifies many previous results. Our main improvement is that we can allow general interpolation couples. The abstract results are applied to regularity problems for fractional evolution equations and stochastic evolution equation, where uniform trace estimates on the half-line are shown.
\end{abstract}

\maketitle

\section{Introduction}

In the theory of evolution equations one often needs to describe the precise regularity of the trace of $u$ at $t=0$, where $u$ is a function which has mixed fractional smoothness in time and space. Parabolic regularity usually implies that $u$ is in
\begin{equation}\label{eq:intersspace}
 \mathcal{A}^{s,p}(\R_+;X_0)\cap L^p(\R_+;X_1),
\end{equation}
where $\mathcal{A}^{s,p}=W^{s,p}$ is the Sobolev–Slobodetskii space, or $\mathcal{A}=H^{s,p}$ is the Bessel potential space, or $\mathcal{A}^{s,p} = F^{s}_{p,q}$ is the Triebel-Lizorkin space, and $s>1/p$. Typically, $X_0 = L^q$ and $X_1$ is some Sobolev–Slobodetskii space as well. In the case $s\in \N$ is an integer, and $\mathcal{A}$ is a Sobolev space, then the precise regularity is well-understood and related to the so-called trace method for real interpolation due to J.L. Lions (see \cite{Tr78} for a detailed account and references).

For $s\in (0,\infty)\setminus \N$, the situation is more delicate and much less is known. This case occurs naturally in:
\begin{enumerate}[{\rm (a)}]
\item\label{it:EEvol} Evolution equations of Volterra type;
\item\label{it:SE} Stochastic evolution equations driven by Brownian noise.
\end{enumerate}
In \eqref{it:EEvol} the smoothness in time is fractional because of the fractional order time derivative in the equations. In \eqref{it:SE} the smoothness in time is fractional because Brownian motion is only $C^\alpha$-regular for all $\alpha\in [0,1/2)$.
These two examples will be discussed in more detail in Section \ref{s:applications}.

Fractional smoothness also occurs when dealing with inhomogeneous boundary value problems. We illustrate this with a simple but nontrivial heat equation. On the time intervals $J = (0,T)$ and the half space $\R^d_+ = (0,\infty)\times\R^{d-1}$ consider
\begin{equation*}
\left\{
  \begin{array}{ll}
    \partial_t u  - \Delta u = 0, &  \text{on $J\times \R^d_+$;} \\
    u=g,  & \text{on $J\times \R^{d-1}$;} \\
    u = 0, & \hbox{on $\{0\}\times\R^d_+$.}
  \end{array}
\right.
\end{equation*}
Typically one wants to characterize when
\begin{equation}\label{eq:uMRspace}
u\in W^{1,p}(J;L^p(\R^d_+))\cap L^p(J;W^{2,p}(\R^d_+)).
\end{equation}
By classical parabolic regularity theory (see \cite{DHPinh, LSU}), the latter holds if and only if
\begin{align*}
g \in \mathbb{B}:=W^{1-\frac1{2p},p}(J; L^p(\R^{d-1}))\cap L^p(J;W^{2-\frac1p,p}(\R^{d-1})),
\end{align*}
and the compatibility conditions $g =0$ holds at $\{0\}\times\R^{d-1}$ if $p>3/2$ and no condition is required if $p<3/2$ (we avoid the case $p=3/2$). The latter space fits exactly in the setting \eqref{eq:intersspace}.
In order to prove the above characterization it is important to characterize the trace space of $\mathbb{B}$ at $t=0$, which is known to be $W^{2-\frac3p,p}(\R^{d-1})$. In case one wants $L^q$-integrability in space in \eqref{eq:uMRspace}, then the space for $g$ becomes more involved and Triebel-Lizorkin spaces are needed (see \cite{DHPinh}).

Without any structured theory, the complexity of the spaces involved easily gets out of hand for more difficult boundary value problems. We refer to \cite{MV14} for the application to the inhomogeneous Stefan problem with Gibbs-Thomson correction, where many traces spaces need to be characterized for spaces of the form \eqref{eq:intersspace}, where often even three spaces are intersected. The same technique can be used to include inhomogeneities in other equations of so called {\em mixed order type}. Details on equations of mixed order type can be found in \cite{DenkKaip, DenkSaalSeiler}.

After these motivations, we return to the general setting of the trace problem for \eqref{eq:intersspace}. There are actually two problems to be solved if one wants to characterize the unknown trace space $X_{\Tr}$. In the following $\Tr u:=u(0)$ and $\R_+:=(0,\infty)$.

\begin{problem}[Trace problem]\label{pro:trace}
Let $s>0$, $p\in (1, \infty)$, $\gamma\geq 0$. Find $X_{\Tr}$ such that
\begin{enumerate}[{\rm (1)}]
\item\label{it:tracech1} $\Tr:\mathcal{A}^{s,p}(\R_+,t^{\gamma}dt;X_0)\cap L^p(\R_+,t^{\gamma}dt;X_1)\to X_{\Tr}$ is  well-defined and bounded;
\item\label{it:tracech2} $X_{\Tr}$ is the smallest space for which \eqref{it:tracech1} holds.
\end{enumerate}
\end{problem}
A sufficient condition for \eqref{it:tracech2} is
\begin{enumerate}[{\rm (2)'}]
\item There exists a right inverse to $\Tr$.
\end{enumerate}

In the Sobolev–Slobodetskii scale ($\mathcal{A} = W$) results about traces of spaces with mixed order (fractional) smoothness can be found in \cite{DiBlasio84} and extensions to weighted spaces in \cite[Theorem 4.2]{MeySchn12}. The case of Bessel-potential spaces ($\mathcal{A}=H$) was considered in special cases in \cite[Theorem 3.6]{Zacher05} under geometric restrictions on $X_0$ and under the condition that $X_1$ is the domain of an $R$-sectorial operator $T$. In the paper \cite{MV14} by Meyries and the third named author, it was shown that \cite{DiBlasio84,MeySchn12,Zacher05} can be unified and geometric and $R$-sectoriality conditions can be omitted. However, in \cite{MV14} it is still assumed that $X_1$ is domain of some sectorial operator $T$, and there is an angle condition on the sectoriality of $T$, which is in terms of the smoothness $s$.

In the current paper we will show that one can actually consider an arbitrary interpolation couple $(X_0,X_1)$, and this extends the boundedness part of \cite{MV14}, which is an important step in the solution to Problem \ref{pro:trace}:

\begin{theorem}\label{thmintro:t:trace_FF_FL_spaces;H}
Let $(X_0, X_1)$ be an interpolation couple of Banach spaces. Let $p\in (1, \infty)$, $\g \in (-1,p-1)$, $q\in [1, \infty]$, and $s>0$. Let $\mathcal{A}^{s,p}\in \{H^{s,p}, W^{s,p}, F^{s}_{p,q}\}$.
Then for $s>\frac{1+\g}{p}$ the following mapping is bounded
\begin{align*}
\Tr: \mathcal{A}^{s,p}(\R_+,t^{\gamma}dt;X_0)\cap L^{p}(\R_+,t^{\gamma}dt;X_1) &\to (X_0,X_1)_{1 - \frac{1+\gamma}{s p},p}, \ \ \Tr u = u(0).
\end{align*}
Moreover, if $\gamma\in [0,p-1)$, then $\mathcal{A}^{s,p}(\R_+,t^{\gamma}dt;X_0)\cap L^p(\R_+,t^{\gamma}dt;X_1)$ continuously embeds into
\begin{align*}
& C_{\mathrm{b}}\big([0,\infty);(X_0,X_1)_{1 - \frac{1+\gamma}{s p},p}\big) \ \ \ \ \  \ \ \ \text{if $s>\frac{1+\g}{p}$,}
\\ & C_{{\rm{b}},\gamma/p}\big((0,\infty);(X_0,X_1)_{1 - \frac{1}{s p},p}\big)  \ \ \ \  \ \text{if $s>\frac{1}{p}$.}
\end{align*}
\end{theorem}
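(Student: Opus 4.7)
The strategy is to combine a Littlewood--Paley decomposition in time with the K-functional characterization of the real interpolation space $(X_0, X_1)_{\theta, p}$, where $\theta := 1 - (1+\gamma)/(sp)$. First I would extend $u$ from $\R_+$ to $\R$ by a bounded extension operator on each of the three scales; this is available because $|t|^\gamma \in A_p(\R)$ for $\gamma \in (-1, p-1)$. Then fix an inhomogeneous dyadic partition of unity $(\phi_j)_{j \geq 0}$ and set $\Delta_j u := \phi_j(D) u$, so that $u(0) = \sum_j \Delta_j u(0)$ in $X_0 + X_1$.

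The key local ingredient is the weighted Nikol'skii inequality: for any Banach space $X$ and any $v$ with $\widehat{v}$ supported in $\{|\xi|\lesssim 2^j\}$,
\begin{equation*}
\|v(0)\|_X \lesssim 2^{j(1+\gamma)/p}\|v\|_{L^p(\R, |t|^\gamma dt; X)},
\end{equation*}
which follows from the reproducing identity $v = \widehat{\phi}_j * v$ and weighted Young's inequality. Applied to $\Delta_j u$ with $X = X_0$, and combined with the Muckenhoupt-weighted vector-valued Littlewood--Paley characterization of each $\mathcal{A}^{s,p} \in \{H^{s,p}, W^{s,p}, F^{s}_{p,q}\}$, this yields $\|\Delta_j u(0)\|_{X_0} \lesssim 2^{-j\alpha} c_j$ with $\alpha := s - (1+\gamma)/p > 0$ and $(c_j) \in \ell^p$ of norm $\lesssim \|u\|_{\mathcal{A}^{s,p}(X_0)}$. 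The three scales are unified by the fact that the target interpolation index $p$ is independent of the microscopic $q$-parameter of $\mathcal{A}^{s,p}$.

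For each $N \in \Z$, decompose $u(0) = a_N + b_N$ with $a_N := \sum_{j \geq N}\Delta_j u(0)$ and $b_N := \sigma_N(D) u(0)$, $\sigma_N := \sum_{j < N}\phi_j$. Then $K(2^{-Ns}, u(0); X_0, X_1) \leq \|a_N\|_{X_0} + 2^{-Ns}\|b_N\|_{X_1}$, and the target interpolation norm is recovered via the dyadic sum $\bigl(\sum_N 2^{Np\alpha} K(2^{-Ns}, u(0))^p\bigr)^{1/p}$. The $a_N$-contribution is handled by discrete Young's inequality, convolving $(c_j)$ against the decaying sequence $(2^{-j\alpha})_{j \geq 0}$. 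The main obstacle is the $b_N$-contribution: the naïve bound $\|\Delta_j u\|_{L^p(X_1)} \lesssim \|u\|_{L^p(X_1)}$ produces a divergent $N$-sum, so instead one uses the convolution representation $b_N = (\Phi_N * u)(0)$ with $\Phi_N(t) = 2^N \Phi(2^N t)$ for a fixed Schwartz $\Phi$; Minkowski's integral inequality in the dilation parameter $\tau = 2^{-N}$ followed by the change of variables $t = \tau r$ then gives
\begin{equation*}
\Big(\sum_N 2^{-N(1+\gamma)}\|b_N\|_{X_1}^p\Big)^{1/p} \lesssim \Big(\int_{\R} |\Phi(r)|\,|r|^{-(1+\gamma)/p}\, dr\Big)\,\|u\|_{L^p(\R, |t|^\gamma dt; X_1)},
\end{equation*}
with the prefactor finite precisely when $\gamma \in (-1, p-1)$.

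For the continuous embedding into $C_{\mathrm{b}}([0,\infty); (X_0, X_1)_{\theta, p})$ under $\gamma \in [0, p-1)$, apply the trace estimate to each translate $u(\cdot + t_0)$, $t_0 \geq 0$: for $\gamma \geq 0$, translation by $t_0$ contracts the weighted $L^p$-norm, yielding a uniform trace bound, and continuity follows from density of smooth compactly supported functions. For the weighted embedding $C_{\mathrm{b},\gamma/p}((0,\infty); (X_0, X_1)_{1 - 1/(sp), p})$ under only $s > 1/p$, apply the unweighted ($\gamma = 0$) trace bound to $v(r) := u(r + t_0)$: since for $\gamma \geq 0$ one has $\|v\|_{\mathcal{A}^{s,p}((0,\infty); X_0) \cap L^p((0,\infty); X_1)} \lesssim t_0^{-\gamma/p}\|u\|_{\mathcal{A}^{s,p}(t^\gamma;X_0) \cap L^p(t^\gamma; X_1)}$, the factor $t_0^{\gamma/p}$ appears on the left of the trace estimate, giving the required weighted continuity. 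The convolution/Minkowski route for the $b_N$-step avoids any need for sectoriality or $R$-boundedness of an operator with $X_1 = D(T)$, which is the key improvement permitting arbitrary interpolation couples.
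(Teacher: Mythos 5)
Your approach via Littlewood--Paley decomposition, weighted Nikol'skii, and the dyadic $K$-functional is a genuinely different route from the paper's. The paper avoids Fourier analysis entirely: it uses the physical-space identity $u(0)= \int_0^{\sigma}  t^{-2}\big( \int_0^{t} (u(\tau)-u(t)) d\tau \big) dt + \frac{1}{\sigma}\int_0^{\sigma}u(\tau)d\tau=:T_0(\sigma)+T_1(\sigma)$ together with the first mean method characterization of $(X_0,X_1)_{\theta,p}$ (Lemma \ref{l:eqreal}) and the Hardy--Young inequality; your $a_N$ plays the role of $T_0$ (high frequency / difference part) and $b_N$ that of $T_1$ (average / low-pass part). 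The mean method is a \emph{continuous-parameter} device, which is precisely what sidesteps the discretization issues that appear in your dyadic $b_N$-estimate.

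There are two concrete gaps. First, the claim that $c_j:=2^{js}\|\Delta_j u\|_{L^p(\R,w_\gamma;X_0)}$ satisfies $\|(c_j)\|_{\ell^p}\lesssim \|u\|_{\mathcal{A}^{s,p}(\R,w_\gamma;X_0)}$ is equivalent to $\mathcal{A}^{s,p}\hookrightarrow B^s_{p,p}$, which \emph{fails} for $F^s_{p,q}$ with $q>p$, and the elementary sandwich \eqref{eq:EmbElementary} only gives $H^{s,p}, W^{m,p}\hookrightarrow F^s_{p,\infty}\hookrightarrow B^s_{p,\infty}$, not $B^s_{p,p}$. You need a preliminary Sobolev embedding $F^s_{p,q}(\R,w_\gamma;X_0)\hookrightarrow F^{\tilde s}_{p,p}(\R,w_{\tilde\gamma};X_0)$ with $\tilde\gamma<\gamma$, $\tilde s-\tfrac{1+\tilde\gamma}{p}=s-\tfrac{1+\gamma}{p}$ to reduce to the $q=p$ case; this is exactly Step 4 of the paper's proof of Theorem \ref{t:trace_FF_FL_spaces}, and without it the $a_N$-estimate is unsubstantiated. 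Second, the ``change of variables $t=\tau r$'' applied to the discrete sum $\sum_N 2^{-N(1+\gamma)}\|u(2^{-N}r)\|^p$ is formal: the pointwise inequality $\sum_N 2^{-N(1+\gamma)}\|u(2^{-N}r)\|^p\lesssim |r|^{-(1+\gamma)}\|u\|^p_{L^p(w_\gamma)}$ is false (take $u$ a thin bump near $1$; for $r$ in a small resonant set the left side exceeds $|r|^{-(1+\gamma)}\|u\|^p$ by an arbitrarily large factor). The final displayed estimate for $\big(\sum_N 2^{-N(1+\gamma)}\|b_N\|^p\big)^{1/p}$ is actually true, but proving it requires averaging over dyadic annuli in $r$ before summing against $|\Phi(r)|$, or else passing to a continuous scale parameter $\lambda$ in place of $2^N$ (which amounts to abandoning the dyadic $K$-functional for the mean method, i.e.\ the paper's Lemma \ref{l:eqreal}). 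As written, this step is not a proof.
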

Here $C_{\rm{b}}([0,\infty);X)$ and $C_{\rm{b},\eta}((0,\infty);X)$ denote the spaces of $X$-valued continuous functions on $[0,\infty)$ and $(0,\infty)$ for which $\dps \sup_{t\in [0,\infty)}\|u(t)\|_X< \infty$ and $\dps \sup_{t>0}t^{\eta} \|u(t)\|_X<\infty$, respectively.

The above result extends and unifies many of the previous mentioned results.
Moreover, consequences with homogeneous norms are derived in Sections \ref{sec:homF} and \ref{s:traceBessel}. Optimality holds in many cases (see \cite[Theorem 1.1]{MV14}), but not in general. For instance in the trivial case where $X_0\hookrightarrow X_1$, the trace mapping is actually bounded with values in the smallest space $X_0$, which shows that optimality is a delicate problem. Further results on this will be given in a subsequent paper.

Both cases $H^{s,p}$  and $W^{s,p}$ of Theorem \ref{thmintro:t:trace_FF_FL_spaces;H} follow from the case $F^{s}_{p,q}$. Indeed, these spaces are sandwiched between the smallest case $q=1$, and largest case $q=\infty$ of $F^{s}_{p,q}$ (see \eqref{eq:EmbElementary} and \eqref{eq:equivhom} below), and the image space $(X_0,X_1)_{\theta,p}$ is independent of $q$.
The result on the weighted continuous functions  with values in $(X_0,X_1)_{1 - \frac{1}{s p},p}$ does not appear elsewhere as far as we know. If $\gamma>0$ this result provides extra information on the regularity in space for $t>0$.
Theorem \ref{thmintro:t:trace_FF_FL_spaces;H} is proved in Sections \ref{s:trace} and \ref{s:traceBessel} where we even allow different $p$, $q$, and $\gamma$ for both spaces in the intersection.

Theorem \ref{thmintro:t:trace_FF_FL_spaces;H} shows that the angle conditions of \cite[Corollary 4.7]{MV14} are not needed for the boundedness of the trace mapping. This angle condition led to restrictions in \cite[Section 5]{Pruss19} which can now be omitted as will be explained in detail in Section \ref{s:applications}. Another important aspect is that, unlike in the previous theory, we can allow $X_1$ to be the homogeneous domain of a sectorial operator. For instance our setting allows to take $X_0 = L^q(\R^d)$ and the homogeneous space $X_1 = \dot{W}^{m,q}(\R^d)$ which is important for certain optimal regularity results for (non)linear evolution equation (cf.\ \cite{danchin2020free} and references therein). In particular, we will state global estimates on $\R_+$ by using homogeneous function spaces, which would be impossible if one is limited to the inhomogeneous setting. These things are demonstrated in Section \ref{s:applications}.

\medskip

Overview:
\begin{itemize}
\item In Section \ref{sec:Prel} we present some preliminaries on function spaces on $\R^d$, and on sectorial operators and functional calculus.
\item In Section \ref{sec:Halfline} we extend some well-known results to function spaces on the half line.
\item In Section \ref{s:trace} we prove the main boundedness result concerning the trace operator.
\item Section \ref{s:traceBessel} we briefly discuss the consequence for Bessel potential spaces.
\item In Section \ref{s:applications} we show how Theorem \ref{thmintro:t:trace_FF_FL_spaces;H} can be used to obtain a priori estimates on infinite time intervals $[0,\infty)$ for Volterra equations and stochastic evolution equations.
\end{itemize}

\subsubsection*{Notation}
The following standard notations are used frequently in the paper $\N_0:=\N\cup\{0\}$, $\R_+ = (0,\infty)$, $w_{\gamma}(t) = t^{\gamma}$. The real interpolation space $(X_0, X_1)_{\theta,p}$ for $\theta\in (0,1)$ and $p\in [1, \infty]$. Dense and continuous embedding $X \stackrel{d}{\hookrightarrow} Y$.

$B^{s}_{p,q}$ Besov space, $F^{s}_{p,q}$ Triebel-Lizorkin space, $H^{s,p}$ Bessel potential space, $W^{m,p}$ Sobolev space, $W^{s,p}$ Sobolev-Slobodetskii space.
Difference seminorm $[f]_{F^{s}_{q,p}(I,w_{\g};X)}$ (see Remark \ref{rem:seminorms}). $\Tr^k$ is the trace mapping and is defined in Lemma \ref{lemma:t:trace_FF_FL_spaces;X_0}.
$C_{\rm{b},\mu}(\R_+;X)$ see \eqref{eq:Cbmu}.
$\Der u = u'$ and $\Dert u = -u'$ see \eqref{eq:defC}-\eqref{eq:defB}.

$A \eqsim B$ means that there is a constant $C>0$ such that $C^{-1} A \leq B \leq C A$. Here the constant $C$ is typically only dependent of parameters which are clear in each context.

\section{Preliminaries}\label{sec:Prel}
In this section we collect definitions and basic results for function spaces with power weights.

Let $\Sc(\R^d;X)$ denote the Schwartz functions with values in $X$ with its usual topology, and let $\Sc'(\R^d;X) = \calL(\Sc(\R^d),X)$ denote the $X$-valued tempered distributions. For $\Omega\subseteq \R^d$ open, let $\D(\Omega) = C^\infty_c(\Omega)$ be the $C^\infty$-functions which have compact support in $\Omega$ endowed with its usual topology, and let $\D'(\Omega;X) =\calL(\D(\Omega),X)$  denote the $X$-valued distributions.

\subsection{Real interpolation and the mean method}
In this subsection we collect basic facts on interpolation theory which will be needed in the paper. For an exposition of the general theory see \cite{BeLo}. As usual, we say that $(X_0,X_1)$ is an interpolation couple of Banach spaces if $X_0$ and $X_1$ are Banach spaces and there exists a topological Hausdorff space $V$, such that $X_i\hookrightarrow V$ continuously for $i\in \{0,1\}$.

Let $(X_0,X_1)$ be an interpolation couple of Banach spaces.  For the definition and the basic properties of the real interpolation spaces $(X_0, X_1)_{\theta,p}$ for $\theta\in (0,1)$ and $p\in [1, \infty]$,  we refer to \cite{BeLo, Analysis1, Tr78}. Recall that $X_0\cap X_1\hookrightarrow (X_0,X_1)_{\theta,p}\hookrightarrow X_0+X_1$, where the first embedding is dense if $p<\infty$. We will frequently use the following reiteration result:
\begin{equation}\label{eq:reit}
((X_0,X_1)_{\theta_0,p_0},(X_0,X_1)_{\theta_1,p_1})_{\alpha,p}=(X_0,X_1)_{\theta,p},
\end{equation}
where $p_0, p_1,p\in [1, \infty]$, $\alpha, \theta_0, \theta_1\in (0,1)$ and $\theta = (1-\alpha) \theta_0 + \alpha \theta_1$.

In this paper we will use the following alternative characterization of real interpolation spaces, which is a variant of the first mean method and can be found in \cite[Remark 2, p. 35]{Tr78}.

\begin{lemma}
\label{l:eqreal}
Let $\xi_0,\xi_1$ be real numbers such that $\xi_0 \xi_1<0$ and $1 \leq p_0,p_1<\infty$. Moreover, set
\begin{equation*}
\theta:= \frac{\xi_0}{\xi_0-\xi_1}, \qquad \frac{1}{p}:=\frac{1-\theta}{p_0}+\frac{\theta}{p_1}.
\end{equation*}
Then $(X_0,X_1)_{\theta,p}$ coincides with the set of all $x\in X_0+X_1$ for which there exist measurable functions $u_j:\R_+\to X_j$ (for $j\in \{0,1\}$) such that the maps $t \mapsto t^{\xi_j}u_j(t)$ belong to $ L^{p_j}(\R_+,\frac{dt}{t};X_j)$ and
\begin{equation}
\label{eq:xid}
x=u_0(t)+u_1(t),\qquad \text{for almost all}\;t>0.
\end{equation}
Furthermore,
$$
\|x\|_{(X_0,X_1)_{\theta,p}}\eqsim_{\theta,p,\xi_0,\xi_1} \inf_{u_0,u_1} \sum_{j\in \{0,1\}} \|t \mapsto t^{\xi_j}u_j(t) \|_{ L^{p_j}(\R_+,\frac{dt}{t};X_j)},
$$
where the infimum is taken over all $u_0,u_1$ such that \eqref{eq:xid} holds.
\end{lemma}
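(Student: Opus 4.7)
The plan is to establish the equivalence via the $K$-functional characterization $\|x\|_{(X_0,X_1)_{\theta,p}}\eqsim \|t^{-\theta}K(t,x)\|_{L^p(\R_+, dt/t)}$, where $K(t,x)=\inf\{\|a\|_{X_0}+t\|b\|_{X_1}:x=a+b\}$. A preliminary change of variables reduces matters to the canonical parametrization $\xi_0=-\theta$, $\xi_1=1-\theta$: WLOG $\xi_0<0<\xi_1$, and the substitution $t=\tau^{\alpha}$ with $\alpha=-\theta/\xi_0>0$ preserves the ratio $\theta=\xi_0/(\xi_0-\xi_1)$ and yields $\alpha\xi_1=1-\theta$ by a direct computation from the definition of $\theta$, so the integrability exponents transform into the standard ones up to a harmless factor on the Haar measure $dt/t$.

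For the direction ``decomposition $\Rightarrow$ interpolation'' I begin from the pointwise bound $K(t,x)\leq \|u_0(t)\|_{X_0}+t\|u_1(t)\|_{X_1}$. The weighted $L^p$-norm of $K(\cdot,x)$ is then estimated by splitting $\R_+$ into the region where $\|u_0(t)\|_{X_0}\geq t\|u_1(t)\|_{X_1}$ and its complement, and on each region H\"older's inequality with conjugate exponents dictated by $1/p=(1-\theta)/p_0+\theta/p_1$ produces a bound by $\|t^{\xi_0}u_0\|_{L^{p_0}(dt/t;X_0)}+\|t^{\xi_1}u_1\|_{L^{p_1}(dt/t;X_1)}$. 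When $p_0=p_1=p$ this collapses to the elementary inequality $(a+b)^p\lesssim a^p+b^p$.

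For the converse, given $x\in(X_0,X_1)_{\theta,p}$, I would use a Peetre-type $J$-representation $x=\int_0^{\infty} v(s)\,\frac{ds}{s}$ with $\|s^{-\theta}J(s,v(s))\|_{L^p(\R_+,ds/s)}\lesssim \|x\|_{(X_0,X_1)_{\theta,p}}$, and then define
\begin{equation*}
u_0(t):=\int_0^t v(s)\,\frac{ds}{s},\qquad u_1(t):=\int_t^{\infty} v(s)\,\frac{ds}{s},
\end{equation*}
which manifestly satisfy $u_0(t)+u_1(t)=x$; measurability is automatic since $v$ is measurable and the integrals are absolutely continuous in $t$. Writing $g(s):=s^{-\theta}J(s,v(s))$ one has $\|v(s)\|_{X_0}\leq s^{\theta}g(s)$ and $\|v(s)\|_{X_1}\leq s^{\theta-1}g(s)$, so the required $L^{p_0}$- and $L^{p_1}$-bounds on $t^{\xi_0}u_0$ and $t^{\xi_1}u_1$ follow from weighted Hardy inequalities on $\R_+$ combined with a H\"older step that exploits $1/p=(1-\theta)/p_0+\theta/p_1$ to convert the single $L^p$-integrability of $g$ into the two separate $L^{p_j}$-estimates.

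The main technical obstacle is this last conversion in the converse direction: transferring a \emph{single} $L^p$-bound on the $J$-representation into \emph{two} bounds with distinct exponents $p_0, p_1$. A naive Hardy application alone would only yield $L^p$-bounds on each side, so the precise interplay between $\theta$, $p$, $p_0$, $p_1$ forced by $1/p=(1-\theta)/p_0+\theta/p_1$ is essential and must be exploited through a carefully balanced H\"older argument.
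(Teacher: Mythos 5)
Your change-of-variables reduction to the canonical parametrization $(\xi_0,\xi_1)=(-\theta,1-\theta)$ is correct and is in fact exactly what the paper does: setting $t=\tau^{1/(\xi_1-\xi_0)}$ (equivalently your $\alpha=-\theta/\xi_0$) maps $t^{\xi_j}$ to $\tau^{j-\theta}$ and $dt/t$ to a constant multiple of $d\tau/\tau$. At that point, however, the paper simply \emph{cites} the resulting statement: the canonical case is the classical first mean method of Lions--Peetre, available as \cite[Theorem 1.5.2~a), p.~33]{Tr78} or \cite[Proposition C.3.7]{Analysis1}. You instead try to reprove it via $K$- and $J$-functionals, and this is where the argument does not close.

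In the direction ``decomposition $\Rightarrow$ interpolation'', write $g_0(t):=t^{-\theta}\|u_0(t)\|_{X_0}$ and $g_1(t):=t^{1-\theta}\|u_1(t)\|_{X_1}$, so that $t^{-\theta}K(t,x)\le g_0(t)+g_1(t)$ pointwise with $g_j\in L^{p_j}(\R_+,dt/t)$. On the region $\{g_0\ge g_1\}$ you bound $t^{-\theta}K$ by $2g_0$, but you then need to estimate $\int_{\{g_0\ge g_1\}} g_0^p\,dt/t$ from $\|g_0\|_{L^{p_0}(dt/t)}$. Since $dt/t$ is an infinite measure there is no inclusion $L^{p_0}(dt/t)\subset L^p(dt/t)$ for $p_0\ne p$, and H\"older on that region with exponents dictated by $1/p=(1-\theta)/p_0+\theta/p_1$ is of no direct help unless one exploits the monotonicity of $K(\cdot,x)$ (increasing, with $K(t,x)/t$ decreasing) or performs a nontrivial regularization of $u_0,u_1$; your sketch contains neither. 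In the converse direction, as you yourself note, the Hardy inequality applied to your ansatz $u_0(t)=\int_0^t v(s)\,ds/s$, $u_1(t)=\int_t^\infty v(s)\,ds/s$ yields only $L^p$-control on $t^{-\theta}u_0$ and $t^{1-\theta}u_1$; passing from the single exponent $p$ to the two exponents $p_0,p_1$ is precisely the content of the mean-method theorem and is not supplied by the ``carefully balanced H\"older argument'' you allude to.

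In short: the only nontrivial work in this lemma is the change of variables, which you get right. The canonical case is a standard theorem and should be cited rather than reproved; your attempted proof of it has a genuine, unresolved gap in both directions.
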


\begin{proof}
The proof is a simple modification of \cite[Remark 2, p. 35]{Tr78}. For convenience of the reader we provide the details. To begin, we recall that $(X_0,X_1)_{\theta,p}$ can be characterized as the set of all $x\in X_0+X_1$ such that there exist measurable functions $u_j:\R_+\to X_j$ (for $j\in \{0,1\}$) such that the maps $t \mapsto t^{j-\theta} u_j(t)$ ($j\in \{0,1\}$) belong to $ L^{p_j}(\R_+,\frac{dt}{t};X_j)$ and $x=u_0(t)+u_1(t)$ for almost all $t>0$. Furthermore,
$$
\|x\|_{(X_0,X_1)_{\theta,p}}\eqsim_{\theta,p} \inf_{u} \sum_{j\in \{0,1\}} \|t \mapsto t^{j-\theta}u_j(t) \|_{ L^{p_j}(\R_+,\frac{dt}{t};X_j)};
$$
where the infimum is taken over all $u$ such that \eqref{eq:xid} holds. See for instance, \cite[Theorem 1.5.2 a) p. 33]{Tr78} or \cite[Proposition C.3.7]{Analysis1}.

To prove the equivalence, it is enough to note that for any measurable maps $u_j$ ($j\in \{0,1\}$) such that $t \mapsto t^{\xi_j}u_j(t)\in L^{p_j}(\R_+,\frac{dt}{t};X_j)$, the functions $v_j(t):=u_j(t^{\frac{1}{\xi_1-\xi_0}})$ satisfy
$$
\int_0^{\infty} (t^{\xi_j} \|u(t)\|_{X_j})^{p_j} \frac{dt}{t}=\frac{1}{|\xi_1-\xi_0|} \int_0^{\infty} (\tau^{j-\theta} \|v_j(t)\|_{X_j})^{p_j} \frac{dt}{t},\quad j\in \{0,1\};
 $$
here we have used the transformation $t=\tau^{\frac{1}{\xi_1-\xi_0}}$ and that $\frac{dt}{t}=\frac{1}{\xi_1-\xi_0}\frac{d\tau}{\tau}$. Moreover,
 $$
 x=\sum_{j\in \{0,1\}} u_j( t^{\frac{1}{\xi_1-\xi_0}})=\sum_{j\in \{0,1\}} v_j( t), \qquad \text{for almost all}\;\; t\in\R_+.
 $$
Thus, the maps $u_j \mapsto v_j$ are isomorphisms and this concludes the proof.
\end{proof}

\subsection{Weighted function spaces}

In this subsection $X$ denotes a Banach space.
Let $p\in (1,\infty)$. Let $\Omega\subseteq \R^d$ be an open set. A weight $w:\Omega\to [0,\infty)$ is a measurable function which is nonzero a.e. The norm of $L^p(\Omega,w;X)$ is given by
\[\|f\|_{L^p(\Omega,w;X)} = \Big( \int_{\Omega} \|f(x)\|_X^p w(x) \, dx\Big)^{1/p}.\]
In case $X = \C$ we write $L^p(\Omega,w) = L^p(\Omega,w;\C)$, and in case $w = 1$ we write $L^p(\Omega;X) = L^p(\Omega,1;X)$. We say $w\in A_p$ (Muckenhoupt $A_p$ class) if
$$\sup_{Q\text{ cubes in }\R^d} \Big( \Xint-_Q w(x)\,dx\Big)\Big( \Xint-_Q w(x)^{-1/(p-1)}\,dx\Big)^{p-1} < \infty.$$

One further sets $A_\infty = \bigcup_{p> 1} A_p$. For details on $A_p$-weights we refer to \cite[Chapter 9]{GraModern} and \cite[Chapter V]{Stein93}.
We will mostly be using $d=1$ and the weight
$w_\gamma(t) = |t|^\gamma$ which is in $A_p$ if and only if $\gamma \in (-1,p-1)$ (see \cite[Example 9.1.7]{GraModern}).

Let $m\in \N_0$, $p\in [1, \infty]$ and a weight $w:\Omega\to X$ such that $w^{-1/(p-1)}\in L^1_{{\rm loc}}(\O)$. Note that, by H\"{o}lder's inequality, the latter implies $L^p(\O,w;X)\hookrightarrow L^1_{{\rm loc}}(\O;X)$. Denote by $W^{m,p}(\Omega,w;X)$ the space of all $f\in L^p(\Omega,w;X)$ for which the distributional derivative $\partial^{\alpha} f$ exists in $L^p(\Omega;X)$ for all $|\alpha|\leq m$, and set
\begin{equation}
\label{eq:Weighted_Sobolev_spaces}
\|f\|_{W^{m,p}(\Omega,w;X)} = \sum_{|\alpha|\leq m} \|\partial^{\alpha} f\|_{L^p(\Omega,w;X)}.
\end{equation}

\subsection{Besov, Triebel-Lizorkin, and Bessel potential spaces}

We recall the definitions of weighted spaces of smooth functions. For details in the unweighted case see \cite{Amannbook2,Saw18,Tr78,Tri83} and in the weighted case \cite{Bui82,MV12}.

Let $\Phi(\R^d)$ be the set of all sequences $(\varphi_k)_{k\geq 0} \subseteq \Sch(\R^d)$ such that
\begin{align}\label{eq:defPhisequence}
\wh{\varphi}_0 = \wh{\varphi}, \qquad \wh{\varphi}_1(\xi) = \wh{\varphi}(\xi/2) - \wh{\varphi}(\xi), \qquad \wh{\varphi}_k(\xi) = \wh{\varphi}_1(2^{-k+1} \xi), \quad k\geq 2, \qquad \xi\in \R^d,
\end{align}
where the Fourier transform $\wh{\varphi}$ of the generating function $\varphi\in \Sch(\R^d)$ satisfies
\begin{equation}\label{gen-func}
 0\leq \wh{\varphi}(\xi)\leq 1, \quad  \xi\in \R^d, \qquad  \wh{\varphi}(\xi) = 1 \ \text{ if } \ |\xi|\leq 1, \qquad  \wh{\varphi}(\xi)=0 \ \text{ if } \ |\xi|\geq \frac32.
\end{equation}
For  $(\varphi_k)_{k\geq 0} \in \Phi(\R^d)$ and $f\in \TD(\R^d;X)$ we let
\begin{equation}
\label{eq:def_S_k}
S_k f = \varphi_k * f = \Four^{-1} ( \wh{\varphi}_k \wh{f}).
\end{equation}
Given $p \in [1,\infty)$, $q\in [1,\infty]$, $w\in A_\infty$ and $s \in \R$, for $f\in {\mathscr S}'(\R^d;X)$ we set
 \begin{align*}
\|f\|_{B_{p,q}^s (\R^d,w;X)} &= \Big\| \big( 2^{sk}S_k f\big)_{k\geq 0} \Big\|_{\ell^q(L^p(\R^d,w;X))}, \\
\|f\|_{F_{p,q}^s (\R^d,w;X)} & = \Big\| \big( 2^{sk}S_k f\big)_{k\ge 0} \Big\|_{L^p(\R^d,w;\ell^q(X))}.
\end{align*}
The Besov space $B_{p,q}^s (\R^d,w;X)$ and the Triebel-Lizorkin space $F_{p,q}^s (\R^d,w;X)$ are those spaces on which the respective extended norms are finite. They are all Banach spaces. Any other $(\psi_k)_{k\geq 0} \in \Phi(\R^d)$ leads to an equivalent norm on the $B$- and $F$-spaces. Note that $B_{p,p}^s = F_{p,p}^s$.
It is well known that the following continuous embeddings hold for $\A\in \{B,F\}$:
\begin{align}\label{eq:embeddingsSchDist}
\Sch(\R^d;X) \hookrightarrow \mathcal{A}^{s}_{p,q}(\R^d;X) \stackrel{d}{\hookrightarrow} \TD(\R^d;X).
\end{align}
The first embedding is also dense if $q<\infty$ (see also Lemma \ref{l:density} below).
For $s\in \R\setminus \N_0$, we define the Sobolev-Slobodetskii space by
\[W^{s,p}(\R^d,w;X) = B_{p,p}^s (\R^d,w;X).\]

Let $p \in (1,\infty)$, $w\in A_p$ and $s \in \R$.  For $f\in {\mathscr S}'(\R^d;X)$ we set
\begin{equation*}
\|f\|_{H^{s,p}(\R^d,w;X)} = \Big\|\Four^{-1} [(1+|\cdot|^2)^{s/2} \Four(f) ]\Big\|_{L^p(\R^d,w;X)}
\end{equation*}
and we define the Bessel-potential space $H^{s,p}(\R^d,w;X)$ as the space on which this extended norm is finite.  Recall the following crucial embedding results for $\mathcal{A}\in \{H,W\}$, $p\in (1,\infty)$, $w\in A_p$ and $s \in \R$:
\begin{align}
\label{eq:EmbElementary}
F^{s}_{p,1}(\R^d,w;X) &\hookrightarrow \mathcal{A}^{s,p}(\R^d,w;X) \hookrightarrow F^{s}_{p,\infty}(\R^d,w;X).
\end{align}

The following simple density result will be needed.
\begin{lemma}
\label{l:density}
Let $(X_0,X_1)$ be an interpolation couple of Banach spaces. Let $p_0,p_1\in [1,\infty)$, $q_0, q_1\in [1, \infty)$, $s_0,s_1\in\R$, $w_0,w_1\in A_\infty$ and $\A\in \{F,B\}$. Then
\[\S(\R,X_0\cap X_1) \stackrel{d}{\hookrightarrow}
\mathcal{A}^{s_0}_{p_0,q_0}(\R,w_{0};X_0)\cap \mathcal{A}^{s_1}_{p_1,q_1}(\R,w_{1};X_1).\]
The same is true if $\A_{p_1,q_1}^{s_1}$ is replaced by $L^{p_1}$.
\end{lemma}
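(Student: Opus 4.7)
The plan is to split the statement into the continuous embedding, which is immediate, and density, which is the substance of the lemma. For continuity, since $X_0\cap X_1\hookrightarrow X_i$ for $i=0,1$, the identity factors as $\S(\R;X_0\cap X_1)\hookrightarrow \S(\R;X_i)\hookrightarrow \mathcal{A}^{s_i}_{p_i,q_i}(\R,w_i;X_i)$, the second embedding being \eqref{eq:embeddingsSchDist}. Intersecting over $i$ gives the desired continuous embedding into the intersection, and the same works if the second factor is $L^{p_1}(\R,w_1;X_1)$.

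For density, I would use the classical two-step scheme: first regularize in frequency, then truncate in space. Given $f$ in the intersection, viewed as an element of $\S'(\R;X_0+X_1)$, set $f_N := \chi_N * f$ with $\chi_N := \sum_{k=0}^{N}\varphi_k \in \S(\R)$; by \eqref{eq:defPhisequence}, $\wh\chi_N(\xi)=\wh\varphi(2^{-N}\xi)$ is compactly supported and equals $1$ for $|\xi|\leq 2^{N-1}$. The key observation is that $f_N$ is simultaneously smooth as an $X_0$-valued and as an $X_1$-valued function (convolution of a Schwartz function against a tempered distribution is smooth), and the two resulting maps agree in $X_0+X_1$; since the canonical inclusion $X_0\cap X_1\hookrightarrow X_0+X_1$ is injective, I conclude that $f_N\in C^\infty(\R;X_0\cap X_1)$.

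A standard Littlewood--Paley bookkeeping then yields $f_N\to f$ in $\mathcal{A}^{s_i}_{p_i,q_i}(\R,w_i;X_i)$ for $i=0,1$: since $S_k(f-f_N)$ vanishes for $k$ much smaller than $N$, equals $S_k f$ for $k\gg N$, and is controlled by $S_k f + S_{k\pm 1}f$ in the transitional range, the norm $\|f-f_N\|_{\mathcal{A}^{s_i}_{p_i,q_i}(w_i;X_i)}$ is dominated by a tail of the $\ell^{q_i}$-sum defining the norm of $f$, which tends to zero because $q_i<\infty$. To finish, fix $\eta\in C_c^\infty(\R)$ with $\eta\equiv 1$ on $[-1,1]$ and put $\eta_R(x):=\eta(x/R)$. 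Then $\eta_R f_N\in C_c^\infty(\R;X_0\cap X_1)\subset \S(\R;X_0\cap X_1)$, and it remains to show $\eta_R f_N\to f_N$ as $R\to\infty$ in each of the two norms. I would prove this by checking that the multiplication operators $M_{\eta_R}\colon g\mapsto \eta_R g$ are uniformly bounded on $\mathcal{A}^{s_i}_{p_i,q_i}(\R,w_i;X_i)$ for $R\geq 1$ (via the pointwise multiplier estimate, using $\|\eta_R\|_{C^M}\leq \|\eta\|_{C^M}$) and converge strongly to the identity on the dense subspace $\S(\R;X_i)$; a $3\varepsilon$-argument then upgrades this to strong convergence on the whole space. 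A diagonal choice $N=N(n)$, $R=R(n)$ produces Schwartz $(X_0\cap X_1)$-valued approximants simultaneously in both norms. The variant with $L^{p_1}(\R,w_1;X_1)$ in place of $\mathcal{A}^{s_1}_{p_1,q_1}$ is easier: $f_N\to f$ in $L^{p_1}(w_1;X_1)$ is essentially Young's inequality, and $\eta_R f_N\to f_N$ follows by dominated convergence.

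The main technical obstacle is the last step: establishing uniform boundedness of the cutoffs $M_{\eta_R}$ on the weighted Besov/Triebel--Lizorkin spaces with merely $A_\infty$-weights, which is the essential input needed for the $3\varepsilon$-argument. For this I would invoke the weighted pointwise multiplier theorem for the $B$- and $F$-scales from \cite{MV12}.
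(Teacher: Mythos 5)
Your first step coincides with the paper's: both pass from $f$ to the band\mbox{-}limited partial sums $f_N=\sum_{k\le N}S_k f=\chi_N * f$, and the convergence $f_N\to f$ in both norms is the standard tail estimate enabled by $q_i<\infty$. The genuine divergence is in the cut-off step. The paper multiplies by $\eta(\delta\,\cdot\,)$ with $\eta\in\S(\R)$, $\eta(0)=1$, and \emph{$\wh\eta$ compactly supported}; since $\wh{\eta(\delta\,\cdot)}$ has support in a ball of radius $\delta$, the product $\eta(\delta\,\cdot)f_N$ stays band-limited, so it is automatically Schwartz and the convergence as $\delta\searrow 0$ reduces to a statement about band-limited functions (\cite[Proposition 2.4]{MV12}). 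You instead take $\eta\in C_c^\infty$, so $\wh{\eta_R}$ is \emph{not} compactly supported and $\eta_R f_N$ is no longer band-limited; the uniform bound you then need for $M_{\eta_R}$ is a genuine pointwise multiplier theorem for weighted $B$/$F$-spaces, a considerably heavier tool.

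This is where the gap is. You attribute that multiplier theorem to \cite{MV12}, but \cite{MV12} contains no such result: its relevant multiplication lemma (Proposition 2.4) is precisely a statement about products with a band-limited factor, which is exactly what the paper's choice of $\eta$ is engineered to exploit and what your choice of $\eta$ forfeits. A general pointwise multiplier bound $\|M_\varphi\|\lesssim\|\varphi\|_{C^M}$ on $\Aspace{s}{p}{q}(\R,w;X)$ for $w\in A_\infty$ is plausible (via Peetre maximal function characterizations, with $M$ depending on the exponent $r$ for which $w\in A_r$), but it is neither cited correctly nor proved, and it is the essential input for your $3\varepsilon$ step. Either supply a correct reference/proof for that multiplier bound, or, more economically, replace your spatial cut-off by the paper's Fourier-compactly-supported one so that the whole argument reduces to the band-limited setting and no multiplier theorem is needed. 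A smaller point: for the $L^{p_1}$ case, the convergence $\chi_N * f\to f$ in $L^{p_1}(\R,w_1;X_1)$ is not literally Young's inequality for a general $A_\infty$ weight; the operative fact is uniform boundedness of $(S_k)$ via the maximal function, which is the mechanism the paper implicitly relies on as well.
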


A similar result holds for $\mathcal{A}_{p_i,q_i}^{s_i}$ replaced by $H^{s_i,p_i}$ for $i\in \{0,1\}$. Since it will be not needed here, we do not include the details.

\begin{proof}
We provide some details in the case $\mathcal{A} =F$. The case of Besov spaces is similar.
For notational convenience we set $\mF:=F^{s_0}_{p_0,q_0}(\R,w_{\g_0};X_0)\cap F^{s_1}_{p_1,q_1}(\R,w_{\g_1};X_1)$.
We employ the argument of \cite[Lemma 3.8]{MV12}. Fix $f\in \mF$ and let $S_k$ be as in \eqref{eq:def_S_k}. Since $f_n:=\sum_{k=0}^n S_k f\to f$ as $n \to \infty$ in $F^{s_i}_{p_i,q_i}(\R,w_{\g_i};X_i)$ for each $i\in \{0,1\}$, it is enough to approximate $f_n$. Following \cite{MV12}, we fix $\eta\in \S(\R)$ such that $\eta(0)=1$ and $\supp(\wh{\eta})\subseteq \{|x|< 1\}$.
Reasoning as in \cite[Lemma 3.8]{MV12}, by \cite[Proposition 2.4]{MV12}, one has $\eta(\delta \cdot)f_n\in \S(\R,X_i)$ and $\eta(\delta \cdot )f_n\to f_n$ in  $F^{s_i}_{p_i,q_i}(\R,w_{\g_i};X_i)$ as $\delta \searrow 0$ for each $i\in \{0,1\}$. This concludes the proof in the case $\A=F$.

The case $\A_1=L^{p_1}$ follows by noticing that $(S_k)_{k\geq 0}$ converges strongly to the identity in $L^{p_1}(\R,w_{\g_1};X_1)$ and therefore the above proof holds also in this case.
\end{proof}

\subsection{Sectorial operators and functional calculus}
\label{sss:sectorial_op_fractional_calculus}
For a detailed discussion on the theory below we refer to \cite{Analysis2,Haase:2,KuWe,pruss2016moving}.

Let $(\varepsilon_j)_{j\geq 1}$ be a sequence such that $\varepsilon_j$'s are independent random variable on a probability space $(\O,\A,\P)$ such that $\P(\varepsilon_j=1)=\P(\varepsilon_j=-1)=1/2$ for all $j\geq 1$. Usually, such sequence is called a Rademacher sequence. A family $\M\subseteq \calL(X)$ is called $R$-bounded if there exists $C>0$ such that for all $N\geq 1$, $x_1,\dots,x_N\in X$ and $T_1,\dots,T_N\in \M$,
$$
\Big\|\sum_{j=1}^N \varepsilon_j T_j x_j\Big\|_{L^2({\O};X)}\leq C\Big\|\sum_{j=1}^N \varepsilon_j x_j\Big\|_{L^2({\O};X)}.
$$
The infimum of all such $C>0$ will be denoted by $R(\M)$. With a slight abuse of notation, we write $R(\M)<\infty$ if $\M$ is $R$-bounded.

A closed linear operator $A:\Do(A)\subseteq X\to X$ is said to be sectorial (resp. $R$-sectorial) if $\overline{\Ran(A)}=\overline{\Do(A)}=X$ and there exists $\phi\in (0,\pi)$ such that $\sigma(A)\subseteq \{z\in \C\,:\,|\arg z|<\phi\}=:\Sigma_{\phi}$ and $\sup_{\lambda\in \C\setminus \Sigma_{\phi}}\|\lambda (\lambda-A)^{-1}\|_{\calL(X)}<\infty$ (resp. $R(\lambda(\lambda-A)^{-1}\,:\,\lambda \in \C\setminus \Sigma_{\phi})<\infty$). Moreover, we denote $\om(A)$ (resp. $\om_R(A)$) the angle of sectoriality (resp. $R$-sectoriality) and it is the infimum over all $\phi \in (0,\pi)$ as above.

Next, we define the $H^{\infty}$-calculus. For $\varphi\in(0,\pi)$, we denote by $H^{\infty}_0(\Sigma_{\varphi})$ the set of all holomorphic function $f:\Sigma_{\varphi}\rightarrow \C$ such that $|f(z)|\leq C|z|^{\varepsilon}/(1+|z|^{2\varepsilon})$ for some $C,\varepsilon>0$ independent of $z\in \Sigma_{\varphi}$. Let $A$ be a sectorial operator of angle $\om(A)<\nu<\varphi$. Then for $f\in H^{\infty}_0(\Sigma_{\varphi})$ we set
\begin{equation}
\label{eq:dunford}
f(A):=\frac{1}{2\pi i} \int_{\partial\Sigma_{\nu}} f(z)R(z,A)\,dz;
\end{equation}
where the orientation of $\partial \Sigma_{\nu}$ is such that $\sigma(A)$ is on the right. By \cite[Section 10.2]{Analysis2}, $f(A)$ is well-defined in $\calL(X)$ and it is independent of $\nu\in (\om(A),\varphi)$.

Furthermore, the operator $A$ is said to have a bounded $H^{\infty}(\Sigma_{\varphi})$-calculus if there exists $C>0$ such that for all $f\in H^{\infty}_0(\Sigma_{\varphi})$,
\begin{equation*}
\|f(A)\|_{\calL(X)}\leq C\|f\|_{H^{\infty}(\Sigma_{\varphi})}\,,
\end{equation*}
where $\|f\|_{H^{\infty}(\Sigma_{\varphi})}=\sup_{z\in\Sigma_{\varphi}}|f(z)|$. Finally, $\angH(A)$ denotes the infimum of all $\varphi\in (\om(A),\pi)$ such that $A$ has a bounded $H^{\infty}(\Sigma_{\varphi})$-calculus.

\subsubsection*{The homogeneous fractional scale $(\Dd(A^{\sigma}):\sigma\in \R)$}
\label{sss:homogeneous_scale}
Let $A$ be a sectorial operator on $X$. For each $\sigma\in \R$, $A^{\sigma}$ defines a closed injective linear operator on $X$ with domain $\Do(A^{\sigma})$ (see e.g. \cite[Chapter 3]{Haase:2} or \cite[Subsection 3.3]{pruss2016moving}). Furthermore, we may define the following spaces (see \cite[Appendix]{KuWe})
\begin{equation}
\label{eq:def_homogeneous_fractional_powers}
\Dd(A^{\sigma}):=(\Do(A^{\sigma}),\|A^{\sigma}\cdot\|_{X})^{\sim}, \qquad \sigma\in \R,
\end{equation}
where $\sim$ denotes the completion. Note that, if $0\not\in \rho(A)$, then $\Dd(A^{\alpha})\not\hookrightarrow \Dd(A^{\beta})$ even if $\alpha>\beta$.  Moreover, for $\varphi\in \R$, $p\in (1,\infty)$ and $ \N\ni k>|\varphi|$ we set
\begin{equation}
\label{eq:def_Dd_A}
\Dd_A(\varphi,p):=(\Dd(A^{-k}),\Dd(A^k))_{\theta,p},   \ \ \ \theta:=1/2+\varphi/(2k).
\end{equation}
By \eqref{eq:reit}, for $\theta_0,\theta_1,\phi\in (0,1)$ and $q_0,q_1,q\in (1,\infty)$ with $\theta:=\theta_0(1-\phi)+\theta_1\phi$,
\begin{align}
\label{eq:Dd_A_reiteration}
(\Dd(A^{\theta_0}),\Dd(A^{\theta_1}))_{\phi,q}
=(\Dd_A({\theta_0},q_0),\Dd_A({\theta_1},q_1))_{\phi,q}
=\Dd_A(\theta,q).
\end{align}

\section{Function spaces on the half line}\label{sec:Halfline}

In this section we discuss some results for function spaces on $\R$ and $\R_+$, which will be needed in the later sections.

\subsection{Besov, Triebel-Lizorkin, and Bessel potential spaces on intervals}
In this subsection we gather basic definition and facts of function spaces on some interval of $\R$. In this subsection, $X$ is a Banach space and $I=(0,T)$ for some $T\in (0,\infty]$. The following quotient definition is standard and can be found in \cite{Amannbook2,Rychkov1999,Saw18,Tr78,Tri83}.

\begin{definition}
\label{def:function_spaces_intervals}
Let $s\in \R$, $\g>-1$, $p\in (1,\infty)$ and $q\in [1,\infty]$. Let $\A^{s,p}\in \{H^{s,p},F^{s}_{p,q},B^{s}_{p,q}\}$. We denote by
$
\A^{s,p}(I,w_{\g};X)
$
the set of all $f\in \D'(I;X)$ for which there exists $g\in \A^{s,p}(\R,w_{\g};X)$ such that $g|_{I}=f$. Moreover, we set
\begin{equation*}
\|f\|_{\A^{s,p}(I,w_{\g};X)}:=
\inf\{\|g\|_{\A^{s,p}(\R,w_{\g};X)}\,:\,g|_{I}=f\}.
\end{equation*}
\end{definition}
With the help of extension operators one can find equivalent descriptions of these spaces. We will only need the following special extension operator.

\begin{proposition}\label{prop:ext}
Let $m\in \N_0$. Then there exists $(b_j)_{j=1}^{m+1}$ in $\R$ and $(\lambda_j)_{j=1}^{m+1}$ in $(0,\infty)$ such that for every Banach space $X$, $p\in (1, \infty)$, $q\in [1, \infty)$, $\gamma\in (-1,p-1)$, $|s|<m$ the mapping
$\Ext^m:B^{s}_{p,q}(\R_+,w_{\gamma};X)\to B^{s}_{p,q}(\R,w_{\gamma};X)$ given by
\begin{equation}
\label{eq:extension_op}
(\Ext^m f)(t):=
\begin{cases}
f(t),\qquad &t>0,\\
\sum_{j=1}^{m+1} b_j f(-\lambda_j t),\qquad &t<0;
\end{cases}
\end{equation}
is bounded, and $\Ext^m f|_{\R_+} = f|_{\R_+}$.

Moreover,
if $f\in L^p(\R_+,w_{\gamma};X)\cap C^{m}(\overline{\R_+};X)$, then $\Ext^m f\in C^{m}(\R;X)$.
\end{proposition}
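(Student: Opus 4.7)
The plan is to choose the coefficients $(b_j,\lambda_j)$ so that derivatives up to order $m$ match across $t=0$, then prove boundedness first on the two endpoint scales $L^p$ and $W^{m,p}$, and finally propagate to the full Besov range by real interpolation and a duality argument for negative smoothness.

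First, I would fix any $m+1$ pairwise distinct positive numbers $\lambda_1,\dots,\lambda_{m+1}$. The matrix with entries $V_{kj}=(-\lambda_j)^k$ ($k,j=0,\dots,m$) is Vandermonde in the distinct numbers $-\lambda_j$ and hence invertible, so there is a unique $(b_j)_{j=1}^{m+1}$ solving
\[
\sum_{j=1}^{m+1} b_j(-\lambda_j)^k = 1, \qquad k=0,1,\dots,m.
\]
For $f\in C^m(\overline{\R_+};X)$ the chain rule yields $\partial^k[\Ext^m f](0^-)=\partial^k f(0^+)$ for each $k\le m$, which gives the last assertion $\Ext^m f\in C^m(\R;X)$ and, more importantly, shows that for smooth functions the distributional derivatives of $\Ext^m f$ on $\R$ of order $\le m$ coincide with the piecewise classical ones.

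Second, I would prove the bounds on the endpoint scales. The $L^p(\R_+,w_\gamma;X)\to L^p(\R,w_\gamma;X)$ bound follows at once from the substitution $s=-\lambda_j t$ on $(-\infty,0)$, which uses $\gamma>-1$ for integrability:
\[
\int_{-\infty}^{0}\|f(-\lambda_j t)\|_X^p|t|^\gamma\,dt = \lambda_j^{-\gamma-1}\|f\|_{L^p(\R_+,w_\gamma;X)}^p.
\]
For the $W^{m,p}$-bound I would reduce by density to $f\in C^m(\overline{\R_+};X)\cap W^{m,p}(\R_+,w_\gamma;X)$ (legitimate since $\gamma\in(-1,p-1)$ puts $w_\gamma$ in $A_p$); the first step then guarantees $\Ext^m f\in C^m(\R;X)$, so its weak $k$-th derivatives on $\R$ ($k\le m$) are given piecewise by $f^{(k)}$ on $\R_+$ and by $\sum_j b_j(-\lambda_j)^k f^{(k)}(-\lambda_j\cdot)$ on $\R_-$, each of which is estimated in $L^p(\R,w_\gamma;X)$ by the substitution above applied to $f^{(k)}$.

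Finally, for $s\in(0,m)$ I would invoke the real-interpolation identification
\[
\Be{s}{p}{q}(I,w_\gamma;X) = (L^p(I,w_\gamma;X),W^{m,p}(I,w_\gamma;X))_{s/m,q}, \qquad I\in\{\R,\R_+\},
\]
together with interpolation of the two endpoint bounds. For $s\in(-m,0]$ I would pass to the transpose with respect to the standard unweighted pairing: the adjoint of $\Ext^m$ is the operator $g\mapsto g|_{\R_+}+\sum_j b_j\lambda_j^{-1}g(-\cdot/\lambda_j)|_{\R_+}$, again a finite combination of restrictions and dilations, bounded on the dual weighted Besov spaces by the case already proven (the dual power weight still lies in the Muckenhoupt class of the conjugate exponent since $\gamma\in(-1,p-1)$). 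The main obstacle will be this last duality step, namely identifying the dual of the quotient space $\Be{s}{p}{q}(\R_+,w_\gamma;X)$ with the correct space of distributions supported in $\overline{\R_+}$ and checking that the transpose of $\Ext^m$ really equals the stated restriction-of-reflection operator; the weighted vector-valued Besov calculus of \cite{MV12} and Rychkov-type arguments should furnish the required tools.
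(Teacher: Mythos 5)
Your plan follows essentially the same route as the paper's proof: determine $(b_j,\lambda_j)$ from a Vandermonde system so that derivatives up to order $m$ match at $t=0$, prove the endpoint $L^p$ and $W^{m,p}$ bounds by the change of variables $s=-\lambda_j t$, interpolate to get $B^{s}_{p,q}$ for $0<s<m$, and dualize for $-m<s\le 0$. The paper delegates the coefficient choice to \cite[Subsection 2.9.3]{Tr78} and the Sobolev endpoint to \cite[Proposition 5.6]{LMV18}, and for $s\le 0$ it invokes precisely the duality $B^{s}_{p,q}(\R,w_{\gamma};X)^*=B^{-s}_{p',q'}(\R,w_{\gamma'};X^*)$ together with density of $\S(\R;X)$; so the two routes coincide.

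The genuine gap is in the duality step, and you partly sense it without resolving it. First, the dual of the quotient space $B^{s}_{p,q}(\R_+,w_\gamma;X)$ is \emph{not} the quotient space $B^{-s}_{p',q'}(\R_+,w_{\gamma'};X^*)$; it is the annihilator of $\{g\in B^{s}_{p,q}(\R,w_\gamma;X): g|_{\R_+}=0\}$ inside $B^{-s}_{p',q'}(\R,w_{\gamma'};X^*)$, i.e.\ the closed subspace of distributions supported in $[0,\infty)$. The formal adjoint you write, landing in the quotient space on $\R_+$ and declared ``bounded by the case already proven'', therefore does not close the argument by itself. The cleaner route, implicit in the paper's appeal to density of $\S(\R;X)$, is to dualize the composite $T:=\Ext^m\circ(\,\cdot\,|_{\R_+})$ on the whole line: one finds
\[
T^*\phi = \mathbf{1}_{\R_+}\Big[\phi+\sum_{j=1}^{m+1} b_j\lambda_j^{-1}\phi(-\,\cdot\,/\lambda_j)\Big],
\]
so a multiplication by $\mathbf{1}_{\R_+}$ (equivalently, an extension by zero across $t=0$) is hiding in the adjoint, and that is where the difficulty lives. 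For $-s$ close to $m$ this cut-off is bounded on $B^{-s}_{p',q'}(\R,w_{\gamma'};X^*)$ only because the bracketed function has high-order vanishing at $t=0$, and this imposes algebraic constraints on $(\lambda_j,b_j)$ involving \emph{negative} powers of $\lambda_j$, beyond the one-sided Vandermonde system $\sum_j b_j(-\lambda_j)^k = 1$, $0\le k\le m$, that you solve with arbitrary distinct $\lambda_j$. In short, ``a finite combination of restrictions and dilations'' omits the indicator-function multiplication, which is not a minor detail: it is the very reason the coefficients must be chosen carefully, as in Triebel's construction, rather than via an arbitrary Vandermonde solve.
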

\begin{proof}
Choosing the $(\lambda_j)_{j=1,\dots,m+1},(b_j)_{j=1,\dots,m+1}$ as in \cite[Subsection 2.9.3]{Tr78}, the result follows analogously to \cite[Proposition 5.6]{LMV18} where a suitable extension operator on $W^{m,p}(\R^d_+,w_{\gamma};X)$ is constructed. Note that unlike stated therein the weights in the latter reference should satisfy an additional scaling property. Now the claimed properties can be checked from the formulas in \cite[Proposition 5.6]{LMV18}. Here for $s\in (-m,0]$ one additionally needs to use the duality result (which does not require any conditions on $X$!)
\[B^{s}_{p,q}(\R,w_{\gamma};X)^*=B^{-s}_{p',q'}(\R,w_{\gamma'};X^*),\]
where $1/p+1/p' = 1$, $1/q+1/q'=1$ and $\gamma' = -\gamma/(p-1)$ (see \cite[Example 6.4]{Lindinter}), and the fact $\Sc(\R;X)$ is dense in $B^{s}_{p,q}(\R,w_{\gamma};X)$.
\end{proof}

\begin{remark}\label{rmk:prop:ext}\
\begin{enumerate}[{\rm(1)}]
    \item Extension operators of the type \eqref{eq:extension_op} are treated in \cite[Section~4.5.2]{Tri92II} for $B^{s}_{p,q}$ and $F^{s}_{p,q}$ with $p \in (0,\infty)$ and $q \in (0,\infty]$. There, the main ingredient is a characterization by means of oscillations, also see \cite{Lindinter}.
    One could extend this to the weighted Banach space-valued setting, where the case $p \in (1,\infty)$ and $w \in A_p$ would simplify.
    \item For the construction of a universal extension operator for $B^{s}_{p,q}$ and $F^{s}_{p,q}$ with $p \in (0,\infty)$ and $q \in (0,\infty]$ on Lipschitz domains we refer the reader to \cite{Rychkov1999}. One could extend this to the weighted Banach space-valued setting. Furthermore, let us note that \cite{Rychkov1999} also contains a preliminary extension result, \cite[Theorem~2.2]{Rychkov1999}.
\end{enumerate}
\end{remark}

\subsubsection*{Characterizations by local means on half-lines}

Let $g$ be a measurable function on $\R$. We say that $g$ satisfies the \emph{moment condition} of order $N \in \N_0 \cup \{-1\}$ if $\int_{\R} x^k g(x) dx = 0$ for all $k \leq N$.

Let $I=(a,\infty)$ with $a \in \R$.
For $f \in \mathscr{D}'(I;X)$ and $\chi \in \mathscr{D}(\R_-)$ we set $f*\chi(x):= f(\chi(x-\,\cdot\,))$ for each $x \in I$, which is well-defined because $\chi(x-\,\cdot\,) \in \mathscr{D}(I)$ for every such $x$. Then $f*\chi$ is an $X$-valued $C^\infty$-function on $I$.

Furthermore, we define $\TD(I;X)$ as the subspace of $\D'(I;X)$ consisting of all $f \in \D'(I;X)$ having finite order and at most polynomial growth at infinity, that is, for which there exist $C \in [0,\infty)$ and $N \in \N_0$ such that
$$
\|f(\phi)\|_X \leq C\sup_{k \leq N}\sup_{t \in I}(1+|t|)^N|\partial^{k}_t\phi(t)|, \qquad \phi \in \D(I).
$$

The following theorem is an extension of  \cite[Theorem~3.2]{Rychkov1999} to the weighted Banach space-valued setting. It will be needed in one of the estimates in the difference characterization in Proposition \ref{prop:equivalentNormF} below.
\begin{theorem}\label{thm:local_mean_Rychkov}
Let $I=(a,\infty)$ with $a \in \R$, $p \in [1,\infty)$, $q \in [1,\infty]$, $w \in A_\infty(\R)$ and $s \in \R$.
Let $\phi_0,\phi \in \mathscr{D}(\R_-)$ satisfy $\phi(x)=\phi_0(x)-\frac{1}{2}\phi(\frac{x}{2})$. Set $\phi_j(x)=2^j\phi(2^j x)$ for each $j \geq 1$.
Suppose that $\int_{\R}\phi_0(x) dx \neq 0$ and that $\phi$ satisfies the moment condition of order $N \geq \lfloor s \rfloor$. Then
\begin{equation*}
\|f\|_{F^s_{p,q}(I,w;X)} \eqsim_{p,q,w,s}
\|(2^{js}\phi_j*f)_{j\geq 0}\|_{L^p(I,w;\ell^q(X))}, \ \ \ f \in \TD(I;X).
\end{equation*}
\end{theorem}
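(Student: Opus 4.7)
The plan is to follow the scheme of \cite[Theorem 3.2]{Rychkov1999}, which establishes this characterization in the scalar unweighted case on $\R$, and upgrade it to the weighted Banach-valued half-line setting. The key observation that allows Rychkov's construction to work on $I=(a,\infty)$ unchanged is the support condition $\supp \phi_j \subseteq \R_-$: for any $x \in I$, $\phi_j * f(x)$ depends only on the values of $f$ on $[x,\infty) \subseteq I$, so these convolutions interact cleanly with the quotient definition of $F^{s}_{p,q}(I,w;X)$.

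First I would invoke Rychkov's local reproducing formula, which constructs $\psi_0, \psi \in \D(\R_-)$ with $\psi$ satisfying moment conditions of any prescribed order and $\psi_j(x) = 2^j \psi(2^j x)$, such that $\sum_{j\geq 0} \psi_j * \phi_j = \delta_0$ in $\TD(\R)$. This construction is purely one of matching supports and vanishing moments, hence transfers verbatim to the $X$-valued setting, and gives the identity $f = \sum_{j\geq 0} \psi_j * (\phi_j * f)$ in $\D'(I;X)$ for every $f \in \TD(I;X)$.

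For the bound $\|(2^{js}\phi_j * f)_j\|_{L^p(I,w;\ell^q(X))} \lesssim \|f\|_{F^{s}_{p,q}(I,w;X)}$, take any extension $\wt f \in F^{s}_{p,q}(\R,w;X)$ of $f$ via Proposition \ref{prop:ext}; the support condition yields $\phi_j * \wt f = \phi_j * f$ on $I$, so it suffices to prove the analogous full-line inequality. This is the standard local means upper bound in weighted vector-valued Triebel--Lizorkin spaces, whose ingredients are the pointwise estimate $\sup_{|y|\leq c 2^{-j}} \|\phi_j * \wt f(x+y)\|_X \leq C M_r(\|\phi_j * \wt f\|_X)(x)$ (proved by the standard Peetre-type argument, using the moment condition on $\phi$) together with the $\ell^q$-valued Fefferman--Stein inequality for the Hardy--Littlewood maximal function on $L^{p/r}(\R,w)$. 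One picks $r < \min(p,q)$ with $w \in A_{p/r}$, which is possible since $w \in A_\infty$. Conversely, for the lower bound, apply the reproducing formula to control the $F^{s}_{p,q}(\R,w;X)$-norm of a suitable extension of $\sum_{j\geq 0} \psi_j * (\phi_j * f)$ by $\|(2^{js}\phi_j * f)_j\|_{L^p(\R,w;\ell^q(X))}$ via the same Peetre--Fefferman--Stein machinery, and take the infimum over extensions.

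The main obstacle is the weighted Banach-valued Peetre-type maximal estimate: the moment condition $N \geq \lfloor s \rfloor$ must be used to compare $(\phi_j)$ with a standard Littlewood--Paley sequence modulo a convergent series in $F^{s}_{p,q}$. The Banach-valued extension is immediate once one pulls the norm $\|\cdot\|_X$ inside and applies scalar maximal function inequalities; the weighted $\ell^q$-valued Fefferman--Stein bound for $A_p$ weights is available in the literature (see e.g.\ \cite{MV12,Bui82}). No fundamentally new idea beyond Rychkov's original argument is required --- the work is in tracking the weight and systematically replacing scalar absolute values by $\|\cdot\|_X$ throughout.
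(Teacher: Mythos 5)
The paper does not actually prove Theorem \ref{thm:local_mean_Rychkov}: it merely asserts it as the extension of \cite[Theorem~3.2]{Rychkov1999} to the weighted Banach-valued half-line setting, so there is no reference proof to compare against. Your sketch correctly identifies Rychkov's scheme, the crucial role of the $\R_-$-support of the kernels (so that $\phi_j*f$ on $I=(a,\infty)$ depends only on $f|_I$), the construction of the companion family $\psi_j$ via the local reproducing formula, and the fact that $w\in A_\infty$ gives $w\in A_{p/r}$ for $r<\min(p,q)$ small enough that the weighted $\ell^{q/r}$-valued Fefferman--Stein inequality applies. This is indeed the intended route.

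Two imprecisions are worth flagging. First, the appeal to Proposition~\ref{prop:ext} for the upper bound is out of scope: that extension operator is constructed on $B^{s}_{p,q}(\R_+,w_\gamma;X)$ with $\gamma\in(-1,p-1)$, whereas here one works with $F^s_{p,q}$ and arbitrary $w\in A_\infty$. It is also unnecessary: by the quotient definition, any $g\in F^s_{p,q}(\R,w;X)$ with $g|_I=f$ works, the support condition gives $\phi_j*g=\phi_j*f$ on $I$, and one then takes the infimum over such $g$. Second, the displayed pointwise estimate $\sup_{|y|\le c2^{-j}}\|\phi_j*\wt f(x+y)\|_X\lesssim M_r(\|\phi_j*\wt f\|_X)(x)$ is \emph{not} a consequence of ``the standard Peetre-type argument'': because $\wh{\phi}_j$ is not compactly supported, such a sub-mean-value bound is precisely the nontrivial core of Rychkov's proof (his Lemma~2.9, derived from the reproducing formula), and it belongs in the \emph{lower} bound, where one must control the Littlewood--Paley pieces of the constructed extension $\sum_j\psi_j*g_j$ by the data $\phi_j*f$. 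For the \emph{upper} bound one instead expands $\phi_j*\wt f=\sum_k\phi_j*\varphi_k*\wt f$ along a genuine Littlewood--Paley family $(\varphi_k)$ and applies the ordinary band-limited Peetre estimate to $\varphi_k*\wt f$, the moment conditions on $\phi$ supplying the decay in $|j-k|$. Once this bookkeeping is straightened out, your plan is correct: the upgrade to the weighted $X$-valued setting is a matter of replacing $|\cdot|$ by $\|\cdot\|_X$ (harmless, since only scalar maximal operators act) and Lebesgue measure by $w\,dx$.
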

A variant of Theorem \ref{thm:local_mean_Rychkov} hold with $F$ replaced by $B$ if $L^p(I,w;\ell^q(X))$ is replaced by $\ell^q(L^p(I,w;X))$.

\subsection{Characterization by differences}
For $a\in \R$, let $I \in \{(a,\infty),\R\}$ and $m \in \N$.
We define
\begin{align*}
V^m_I(x,t) &:= \{ h \in \R : |h|<t, x+mh \in I \}
= \left\{\begin{array}{lr}
(-t,t),     &  I=\R,\\
(-t,t) \cap (\frac{a}{m}-\frac{x}{m},\infty),     &  I=\R_+,
\end{array}\right.
\end{align*}
for each $x \in I$ and $t > 0$.
For a strongly measurable function $f:I \to X$ we define the $m$-th order difference of $f$ as
$$
\Delta_h^m f(x):=\sum_{j=0}^m \binom{m}{j} (-1)^j f(x+(m-j)h),\qquad x \in I, h \in \R, x+m \in I,
$$
and we define the means of the $m$-th order difference of $f$ as
$$
d^m_t f(x) := t^{-1}\int_{V^m_I(x,t)}\|\Delta_h^m f(x)\|_X dh, \qquad x \in I, t>0.
$$

Let $s>0$, $p \in (1,\infty)$ and $w \in A_p(\R)$.
For $f\in L^p(I,w;X)$ we set
\begin{align}\label{eq:defFmnorm}
[f]_{F^s_{p,q}(I,w;X)}^{(m)}
&:=\Big\| \Big( \int_0^{\infty} \Big( t^{-s} d^m_t f(\,\cdot\,) \Big)^q \frac{dt}{t} \Big)^{1/q}\Big\|_{L^p(I,w)},
\end{align}
with the usual modification for $q=\infty$, and
$$
\norm{f}_{F^s_{p,q}(I,w;X)}^{(m)}:=\|f\|_{L^p(I,w;X)}+[f]_{F^s_{p,q}(I,w;X)}^{(m)}.
$$
By a discretization argument, for any $\kappa>0$,
\begin{align}\label{eq:Fmdiscr}
[f]_{F^s_{p,q}(I,w;X)}^{(m)} \eqsim_{s,\kappa} \|(2^{js}d^m_{2^{-j}\kappa}f)_{j \in \Z}\|_{L^p(I,w;\ell^q(X))}.
\end{align}

One can obtain the following extension to the weighted setting of a well-known result on the characterization by means of differences for $F$-spaces  (see \cite[Proposition~6]{SS04}, \cite[Section 2.5.11]{Tri83}, \cite[Sections 3.5.3 and 4.5.4]{Tri92II} and \cite[Theorem 6.9]{Tri06III}).

\begin{proposition}
\label{prop:equivalentNormF}
Let $p\in (1,\infty)$, $q\in [1,\infty]$, $w\in A_p(\R)$ and $s>0$. Let $m$ be an integer such that $m>s$. Let $I\in \{\R,(a,\infty)\}$ where $a\in \R$ is fixed.
Then there is the equivalence of (extended) norms
\begin{equation}
\label{eq:equivalence_norm}
\|f\|_{F^s_{p,q}(I,w;X)} \eqsim \norm{f}_{F^s_{p,q}(I,w;X)}^{(m)}, \ \ \ f\in L^p(I,w;X).
\end{equation}
\end{proposition}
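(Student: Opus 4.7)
The plan is to prove the equivalence first for $I = \R$ by combining the Littlewood--Paley decomposition with the (weighted, vector-valued) Fefferman--Stein maximal inequality and a classical local-means characterisation, and then to transfer the result to $I = (a,\infty)$ using Definition~\ref{def:function_spaces_intervals} for one inequality and Theorem~\ref{thm:local_mean_Rychkov} for the other.

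For $I = \R$, the direction $[f]_{F^s_{p,q}(\R,w;X)}^{(m)} \lesssim \|f\|_{F^s_{p,q}(\R,w;X)}$ is obtained by writing $f = \sum_{k\geq 0} S_k f$ and using the two estimates $\|\Delta_h^m S_k f(x)\|_X \lesssim (2^k|h|)^m M^*_k f(x)$ (for $2^k|h|\leq 1$, via Bernstein's inequality applied to band-limited functions) and $\|\Delta_h^m S_k f(x)\|_X \lesssim M^*_k f(x)$ (otherwise), where $M^*_k$ denotes the Peetre-type maximal operator dominated pointwise by the Hardy--Littlewood maximal function of $\|S_k f\|_X$. Integration over $|h|<t$ and the discretisation \eqref{eq:Fmdiscr} yield $2^{js}d^m_{2^{-j}}f(x) \lesssim \sum_k \omega_{j-k}\, 2^{ks} M_{\mathrm{HL}}(\|S_k f\|_X)(x)$ with $\ell^1$-summable weights $\omega$ (because $0<s<m$); the vector-valued Fefferman--Stein inequality on $L^p(\R,w;\ell^q)$ for $A_p$ weights then closes the estimate when $q\in (1,\infty]$, while $q=1$ is recovered by the Peetre maximal characterisation $\|f\|_{F^s_{p,1}(\R,w;X)}\eqsim \|\sum_k 2^{ks} M^*_k f\|_{L^p(\R,w)}$, into which the pointwise estimate plugs directly. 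For the reverse direction we apply a classical weighted vector-valued local-means characterisation of $F^s_{p,q}(\R,w;X)$ with kernels $\phi_0, \phi \in \D(\R_-)$, choosing $\phi$ as an averaged $m$-th order difference
\[
\phi(x) := \int_1^2 h^{-m} \eta(h)\,(\Delta_h^m \chi)(x)\, dh,
\]
where $\chi \in \D((-\delta,0))$ and $\eta \in C_c^\infty((1,2))$. For each such $h$, $\Delta_h^m \chi$ is supported in $\R_-$, hence $\phi \in \D(\R_-)$; Fourier analysis gives that $\wh{\phi}$ vanishes to order $m$ at the origin, so $\phi$ satisfies the moment condition of order $m-1 \geq \lfloor s \rfloor$. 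Using the discrete integration-by-parts identity $\int (\Delta_h^m g)(y) f(x-y)\,dy = \int g(y) (\Delta_h^m f)(x-y)\,dy$, the scaling $\phi_j = 2^j\phi(2^j\cdot)$, and a change of variables, one derives the pointwise bound $\|\phi_j * f(x)\|_X \lesssim M_{\mathrm{HL}}(d^m_{c 2^{-j}} f)(x)$ for some universal $c>0$, together with $\|\phi_0 * f(x)\|_X \lesssim M_{\mathrm{HL}}(\|f\|_X)(x)$. Combined with the local-means characterisation, the Fefferman--Stein inequality, and \eqref{eq:Fmdiscr}, this yields $\|f\|_{F^s_{p,q}(\R,w;X)} \lesssim \|f\|_{L^p(\R,w;X)} + [f]_{F^s_{p,q}(\R,w;X)}^{(m)}$.

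For $I = (a,\infty)$, the inequality $[f]^{(m)}_{F^s_{p,q}(I,w;X)} \lesssim \|f\|_{F^s_{p,q}(I,w;X)}$ follows from Definition~\ref{def:function_spaces_intervals} by taking an extension $g\in F^s_{p,q}(\R,w;X)$ of $f$ with $\|g\| \leq 2\|f\|$ and observing that for $x\in I$ and $h \in V^m_I(x,t)$ all evaluation points $x + jh$ ($0\leq j\leq m$) lie in $I$, so $\Delta_h^m f(x)= \Delta_h^m g(x)$ and $d^m_t f(x)\leq d^m_t g(x)$, reducing the bound to the $\R$-case. For the reverse inequality we use Theorem~\ref{thm:local_mean_Rychkov} directly on $I$ with the very same $\phi\in\D(\R_-)$: for $x \in I$, the convolution $(\phi_j * f)(x)$ only probes values of $f$ on $[x,\infty)\subset I$, and the pointwise bound from the $\R$-case goes through verbatim because the differences $\Delta_h^m f(y)$ appearing involve only points in $I$ (automatically matching the constraint $h\in V^m_I(y,\cdot)$). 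The main technical obstacle is the construction of the kernel $\phi$ with all three properties (support in $\R_-$, moment condition of order $\geq \lfloor s \rfloor$, and representability as an averaged $m$-th difference) together with the careful change-of-variables bookkeeping required to derive the pointwise bound in terms of $d^m_t f$; once this is handled, the weighted Banach-valued statements reduce to the Fefferman--Stein inequality for $A_p$ weights, with the $q=1$ endpoint handled by the Peetre maximal function characterisation.
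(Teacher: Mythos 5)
Your overall strategy mirrors the paper's: obtain the equivalence on $\R$ (which the paper simply cites from the standard references), deduce the inequality $[f]^{(m)}_{F^s_{p,q}(I,w;X)}\lesssim\|f\|_{F^s_{p,q}(I,w;X)}$ on the half-line by extension, and prove the reverse inequality on the half-line via the local-means characterisation of Theorem~\ref{thm:local_mean_Rychkov}. Supplying the details of the $\R$-case via Littlewood--Paley, Peetre maximal functions, and the Fefferman--Stein inequality is unobjectionable, and your extension step for the direction ``$\gtrsim$'' on $(a,\infty)$ is exactly the paper's argument.

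There is, however, a genuine gap in your treatment of the reverse inequality on $I=(a,\infty)$. You construct $\phi$ as an averaged $m$-th difference $\phi=\int_1^2 h^{-m}\eta(h)\,\Delta_h^m\chi\,dh$, verify $\phi\in\D(\R_-)$ and the moment condition, and then apply Theorem~\ref{thm:local_mean_Rychkov} with this $\phi$. But Theorem~\ref{thm:local_mean_Rychkov} requires a \emph{pair} $(\phi_0,\phi)\subset\D(\R_-)$ satisfying the scaling relation $\phi(x)=\phi_0(x)-\tfrac12\phi(\tfrac{x}{2})$ together with $\int\phi_0\neq0$; this compatibility is precisely what makes the kernel family $(\phi_j)_{j\ge 0}$ a (quasi-)resolution of the identity and lets one recover $\|f\|_{F^s_{p,q}}$ from the means $\phi_j*f$. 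You never produce $\phi_0$ or check the relation, and in fact one cannot simply complete an arbitrary compactly supported $\phi$ with vanishing mean to such a pair: on the Fourier side the formal solution is $\widehat{\phi_0}(\xi)=\sum_{k\geq0}\widehat{\phi}(2^k\xi)$, whose inverse transform $\sum_{k\geq0}2^{-k}\phi(2^{-k}\cdot)$ has unbounded support, so $\phi_0\notin\D(\R_-)$. The paper avoids this by going in the opposite direction: it \emph{starts} from the kernels $k_0,k$ of \cite[Section~3.3.2]{Tri92II}, which are built to satisfy the hypotheses of Theorem~\ref{thm:local_mean_Rychkov}, and then \emph{shows} that they admit a representation $\phi_j*f(x)=\int_{\R_+}\partial_y^m\chi(y)\,\Delta^m_{2^{-j}y}f(x)\,dy$, yielding the clean pointwise bound $\|\phi_j*f(x)\|_X\lesssim d^m_{2^{-j}\kappa}f(x)$ with the difference anchored at $x$. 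Your averaged-difference kernel instead yields a bound in terms of $\Delta^m_{2^{-j}h}f$ at the shifted points $x-2^{-j}v$, forcing a detour through a Hardy--Littlewood maximal function; this by itself is harmless (Fefferman--Stein with $A_p$ weights handles it), but the missing $\phi_0$ and the unverified scaling relation are not, and they are not mere technicalities: they are the hypotheses under which Theorem~\ref{thm:local_mean_Rychkov} is valid. To repair the argument you would need either to modify your kernel to come from a valid $(\phi_0,\phi)$ pair, or to invoke a version of the local-means characterisation that does not impose the scaling relation (which is not the version stated in the paper).
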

\begin{proof}
The case $I = \R$ follows as in the above references.

Next consider $I = (a, \infty)$. By a translation argument we may assume $a=0$. The inequality '$\gtrsim$' in \eqref{eq:equivalence_norm} follows from the corresponding inequality for $I = \R$.
Indeed, given $f \in F^s_{p,q}(\R_+,w;X)$ and $g \in F^s_{p,q}(\R,w;X)$ with $f=g|_{\R_+}$, we have
\begin{equation*}
\norm{f}_{F^s_{p,q}(\R_+,w;X)}^{(m)} \leq \norm{g}_{F^s_{p,q}(\R,w;X)}^{(m)} \lesssim \|g\|_{F^s_{p,q}(\R,w;X)}
\end{equation*}
and '$\gtrsim$' in \eqref{eq:equivalence_norm} follows by taking the infimum over all such $g$.

In order to prove the reverse inequality '$\lesssim$' in \eqref{eq:equivalence_norm}, fix $f \in L^p(\R_+,w;X)$.
Let $k_0,k \in \mathscr{D}(\R_+)$ be as in \cite[Section~3.3.2]{Tri92II}.
Then $\phi_0 := k_0(-\,\cdot\,)$ and $\phi := k(-\,\cdot\,)$ satisfy the conditions of Theorem~\ref{thm:local_mean_Rychkov}, and there exists $\chi \in \mathscr{D}(\R_+)$ such that
$$
\phi_j*f(x) = \int_{\R_+} \partial_y^m \chi(y) \Delta^m_{2^{-j}y}f(x)dy, \quad x \in \R_+, j \geq 1,
$$
implying that
\begin{align*}
\|\phi_j*f(x)\|_X
&\lesssim \int_{\supp(\chi)} \|\Delta^m_{2^{-j}y}f(x)\|_X dy\lesssim d^m_{2^{-j}\kappa}f(x), \quad x \in \R_+, j \geq 1,
\end{align*}
for $\kappa > 0$ such that $\supp(\chi) \subseteq (0,\kappa)$.
As $p \in (1,\infty)$ and $w \in A_p(\R)$, we have $\|\psi_0*f\|_{L^p(\R_+,w;X)} \lesssim \|f\|_{L^p(\R_+,w;X)}$.
It thus follows from Theorem~\ref{thm:local_mean_Rychkov} that
\begin{align*}
\|f\|_{F^s_{p,q}(\R_+,w;X)}
&\lesssim \|\psi_0*f\|_{L^p(\R_+,w;X)}+\|(2^{js}\phi_j*f)_{j \geq 1}\|_{L^p(\R_+,w;\ell^q(X))} \\
&\lesssim \|f\|_{L^p(\R_+,w;X)}+\|(2^{js}d^m_{2^{-j}\kappa}f)_{j \geq 1}\|_{L^p(\R_+,w;\ell^q(X))}
\lesssim [f]_{F^s_{p,q}(\R_+,w;X)}^{(m)}
\end{align*}
where in the last step we used \eqref{eq:Fmdiscr}.
\end{proof}

\begin{remark}\label{rem:seminorms}
Let $s>0$, $m,n\in \N$ with $m,n>s$. If $\g\in (-1,p-1)$, then setting $f_{\lambda}=f(\lambda \cdot)$ in \eqref{eq:equivalence_norm} and letting $\lambda\to \infty$ one obtains
$$
[f]_{F^{s}_{q,p}(I,w_{\g};X)}^{(m)}\eqsim [f]_{F^{s}_{q,p}(I,w_{\g};X)}^{(n)},
$$
for all $f \in F^{s}_{q,p}(I,w_{\g};X)$.
Therefore, for the weight $w_{\g}$, throughout the paper we write $[f]_{F^{s}_{q,p}(I,w_{\g};X)}$ instead of $[f]_{F^{s}_{q,p}(I,w_{\g};X)}^{(m)}$ whenever this is convenient.
\end{remark}

\subsection{Connections with fractional powers}\label{sec:characfractional}
Let $p\in (1, \infty)$ and $\gamma\in (-1,p-1)$. On $L^p(\R_+,w_{\gamma};X)$ consider the following derivative operators:
\begin{align}\label{eq:defC}
\Der u &= u' & \text{with $\Do(\Der) = \Wz^{1,p}(\R_+,w_{\gamma};X)$,}
\\
\label{eq:defB}
\Dert u &= -u' & \text{with $\Do(\Dert) = W^{1,p}(\R_+,w_{\gamma};X)$,}
\end{align}
where $\Wz^{1,p}(\R_+,w_{\gamma};X):=\{u\in W^{1,p}(\R_+,w_{\gamma};X)\,:\, u(0)=0\}$. By standard considerations, one can check that the trace $u(0)$ for $u\in W^{1,p}(\R_+,w_{\gamma};X)$ is well-defined  (see Lemma \ref{lemma:t:trace_FF_FL_spaces;X_0} and \eqref{eq:EmbElementary}).

Then for $\gamma\in [0,p-1)$, $(e^{-t \Dert})_{t\geq 0}$ is the contractive $C_0$-semigroup given by left-translation.
Fix integers $0<n<m$. By the Balakrishnan formula for the fractional power (see \cite[Theorem 3.2.2]{MartinezSanz}) we have that for $s\in (0,n)$ and $u\in \Do(\Dert^n)$
\begin{align}\label{eq:Bpowers}
\Dert^{s} u= C_{s,m} \int_0^\infty \frac{(\Id-e^{-t\Dert})^m u}{t^{1+s}} dt.
\end{align}
Moreover, when $u\in L^p(\R_+,w_{\gamma};X)$, and
$\lim_{\delta\downarrow 0}\int_\delta^\infty \frac{(\Id-e^{-t\Dert})^m u}{t^{1+s}} dt$ exists in $X$,
$u\in \Do(\Dert^{s})$ and \eqref{eq:Bpowers} holds.
In particular, this is the case if
\begin{align}\label{eq:fractionalpowerB}
\Big\| x \mapsto \int_0^\infty t^{-1-s}\|(\Id - e^{-t\Dert} )^m u(x) \|_{X}dt \Big\|_{L^p(\R_+,w_{\gamma})}<\infty.
\end{align}
Let us compare this to the difference norm of $F^s_{p,q}$ introduced in \eqref{eq:defFmnorm}.  By Fubini's theorem
\[[u]_{F^s_{p,1}(I,w_{\gamma};X)}^{(m)} = c_{s}
\Big\| x \mapsto \int_{-mx}^\infty |h|^{-1-s} \|\Delta_h^m u(x) \|_X dh \Big\|_{L^p(\R_+,w_{\gamma})}.\]
where $c_{s}>0$ depends only on $s$. Therefore, by \eqref{eq:fractionalpowerB} it is immediate that $u\in \Do(\Dert^s)$ and
$\|\Dert^s u \| \lesssim [f]_{F^s_{p,1}(\R_+,w_{\gamma};X)}^{(m)}$. This implies $F^{s}_{p,1}(\R_+,w_{\gamma};X) \hookrightarrow \Do(\Dert^s)$.

Next we show that one can obtain a Sandwich lemma of similar nature as \eqref{eq:EmbElementary}. If one assumes $X$ has UMD (see e.g.\ \cite[Chapter 4]{Analysis1}), then the embedding in the next result actually follows from \eqref{eq:EmbElementary} and \cite[Theorem 6.8(2)]{LMV18}.
\begin{lemma}[Sandwich lemma]\label{lem:sandwichHRplus}
Let $s>0$ and let $m>s$ be an integer. Let $p\in (1, \infty)$ and $\gamma\in [0,p-1)$. Then $F^{s}_{p,1}(\R_+,w_{\gamma};X) \hookrightarrow \Do(\Dert^s)\hookrightarrow F^{s}_{p,\infty}(\R_+,w_{\gamma};X)$, Moreover, for all $u\in F^{s}_{p,1}(\R_+,w_{\gamma};X)$,
\begin{align}\label{eq:equivhom}
[u]_{F^{s}_{p,\infty}(\R_+,w_{\gamma};X)} \lesssim \|\Dert^{s} u\|_{L^p(\R_+,w_{\gamma};X)}\lesssim [u]_{F^{s}_{p,1}(\R_+,w_{\gamma};X)},
\end{align}
\end{lemma}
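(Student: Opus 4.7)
The right inequality $\|\Dert^s u\|_{L^p}\lesssim [u]_{F^{s}_{p,1}}^{(m)}$, and hence the embedding $F^{s}_{p,1}(\R_+,w_\gamma;X)\hookrightarrow\Do(\Dert^s)$, are essentially proved in the paragraph immediately before the lemma via Balakrishnan's formula \eqref{eq:Bpowers}, the identity $(I-e^{-t\Dert})^mu=(-1)^m\Delta_t^m u$ for $t>0$, and Fubini. My proposal therefore focuses on the left inequality $[u]_{F^{s}_{p,\infty}}\lesssim \|\Dert^s u\|_{L^p}$; the embedding $\Do(\Dert^s)\hookrightarrow F^{s}_{p,\infty}$ will then follow by density.

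Set $v:=\Dert^s u\in L^p(\R_+,w_\gamma;X)$ and work first with $u$ in a dense subspace on which the Laplace-type representation $u(x)=\Dert^{-s}v(x)=\frac{1}{\Gamma(s)}\int_0^\infty \sigma^{s-1}\,v(x+\sigma)\,d\sigma$ converges absolutely (e.g.\ $u\in \Sc(\R;X)\cap F^{s}_{p,1}(\R_+,w_\gamma;X)$, which is dense by Lemma~\ref{l:density}); the representation relies on the fact that $\Dert$ generates the left-translation semigroup $(e^{-r\Dert}w)(x)=w(x+r)$. Substituting into $(I-e^{-h\Dert})^m u(x)=\sum_{k=0}^m \binom{m}{k}(-1)^k u(x+kh)$ and changing variable $\sigma=h(\tau-k)$ gives, for $h>0$,
\[
\|\Delta_h^m u(x)\|_X \le \frac{h^s}{\Gamma(s)}\int_0^\infty |K(\tau)|\,\|v(x+h\tau)\|_X\, d\tau,
\]
with $K(\tau)=\sum_{k=0}^m\binom{m}{k}(-1)^k(\tau-k)_+^{s-1}$. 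Since $0<s<m$, $K\in L^1(\R_+)$: it has integrable singularities at $\tau=0,1,\ldots,m-1$ (when $s<1$) and decays like $O(\tau^{s-m-1})$ at infinity. For $h<0$, the identity $\Delta_h^m u(x)=(I-e^{-|h|\Dert})^m u(x-m|h|)$ (valid when $x+mh\ge 0$) yields an analogous bound with $v$ evaluated at $x+|h|(\tau-m)$.

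The crucial observation, exploiting the \emph{averaging over $h$} built into the $F$-seminorm from Proposition~\ref{prop:equivalentNormF}, is that Fubini together with the substitution $\mu=h\tau$ give the pointwise bound
\[
\frac{1}{t^{s+1}}\int_0^t \|\Delta_h^m u(x)\|_X\, dh \le \frac{1}{\Gamma(s)}\int_0^\infty |K(\tau)|\, H_x(t\tau)\, d\tau \le \frac{\|K\|_{L^1}}{\Gamma(s)}\,M^+\|v\|_X(x),
\]
where $H_x(r):=r^{-s-1}\int_0^r \mu^s\,\|v(x+\mu)\|_X\, d\mu\le M^+\|v\|_X(x)$ uniformly in $r>0$ thanks to the elementary bound $\mu^s\le r^s$ on $[0,r]$ (which requires $s>0$), and $M^+$ denotes the one-sided Hardy--Littlewood maximal operator. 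Treating the $h<0$ contribution analogously (the subranges $\tau>m$ and $\tau<m$ producing right- and left-oriented one-sided averages of $\|v\|_X$) yields $\sup_{t>0}t^{-s-1}\int_{V_I^m(x,t)} \|\Delta_h^m u(x)\|_X\,dh \le C\,M\|v\|_X(x)$, with $M$ the two-sided Hardy--Littlewood maximal operator on $\R$ (applied to $\|v\|_X$ extended by zero to $\R_-$).

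Since $w_\gamma\in A_p(\R)$ for $\gamma\in[0,p-1)$, $M$ is bounded on $L^p(\R,w_\gamma)$, so Proposition~\ref{prop:equivalentNormF} yields $[u]_{F^{s}_{p,\infty}(\R_+,w_\gamma;X)}^{(m)} \lesssim \|v\|_{L^p}=\|\Dert^s u\|_{L^p}$. Extending to all $u\in \Do(\Dert^s)$ proceeds by approximation and lower semi-continuity of the $F$-seminorm along $L^p$-convergent sequences, and combined with the trivial $\|u\|_{L^p}\le \|u\|_{L^p}$ gives the embedding $\Do(\Dert^s)\hookrightarrow F^{s}_{p,\infty}$. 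The main delicate point I expect is the rigorous justification of the Laplace representation of $\Dert^{-s}$ on $L^p(\R_+,w_\gamma;X)$, where $0\in\sigma(\Dert)$ prevents $\Dert$ from being strictly sectorial; this is precisely what the reduction to the dense subspace (plus the already-proved right inequality) is designed to bypass.
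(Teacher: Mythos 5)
Your proposal is correct in its essentials but takes a genuinely different route from the paper. The paper first establishes the \emph{inhomogeneous} estimate $\|u\|_{F^{s}_{p,\infty}(\R_+,w_{\gamma};X)}\lesssim \|(1+\Dert)^s u\|_{L^p}$ by writing $(1+\Dert)^{-s}v = R(1+\Dert_\R)^{-s}Ev$ (restriction--extension to $\R$), and then invoking the lifting property and Fourier multiplier theorems in weighted Triebel--Lizorkin spaces on the full line (the references to \cite[Propositions 3.9--3.10]{MV12} and \cite{Tr97}); the homogeneous estimate \eqref{eq:equivhom} is then extracted via the scaling argument $u\mapsto u(\lambda\cdot)$, $\lambda\to\infty$. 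You instead work directly on the half-line with the fractional-integral representation $\Dert^{-s}v(x)=\tfrac1{\Gamma(s)}\int_0^\infty \sigma^{s-1}v(x+\sigma)\,d\sigma$, insert it into the $m$-th difference, and show that the averaged difference quantity $t^{-s-1}\int_{V^m_I(x,t)}\|\Delta^m_h u(x)\|_X\,dh$ is pointwise dominated by the Hardy--Littlewood maximal function of $\|v\|_X$; the $L^p(w_\gamma)$-boundedness of $M$ for $w_\gamma\in A_p$ then closes the argument. This is a direct kernel-plus-maximal-function proof that bypasses Fourier analysis entirely and produces the homogeneous inequality at once, at the cost of the extra bookkeeping you flag: justifying the Laplace representation of $\Dert^{-s}$ on a dense class (for instance $\S(\R;X)|_{\R_+}$, using that the second inequality, already proved before the lemma, guarantees $u\in\Do(\Dert^s)$ there), checking $K\in L^1(\R_+)$ using $0<s<m$ (integrable singularities at $\tau=0,\dots,m-1$ when $s<1$ and $O(\tau^{s-m-1})$ decay at $\infty$), treating $h<0$ via $\Delta^m_h u(x)=(I-e^{-|h|\Dert})^m u(x-m|h|)$ for $x+mh\geq 0$, and extending by density and Fatou-type lower semicontinuity of the difference seminorm. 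The paper's route is more modular, reusing multiplier machinery that is needed elsewhere in the paper and extending painlessly to more general weights (cf.\ Remark \ref{rem:generalweights}); yours is more elementary and self-contained, and makes the role of the $A_p$ condition transparent.
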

\begin{proof}
The second embedding follows from the above discussion, but one could also use a variation of the argument below. For the first one, we will first show that
\begin{equation}\label{eq:lem:sandwichHRplus}
\|u\|_{F^{s}_{p,\infty}(\R_+,w_{\gamma};X)}\leq C\|(1+\Dert)^s u\|_{L^p(\R_+,w_{\gamma};X)},
\end{equation}
or equivalently, that $\|(1+\Dert)^{-s} v\|_{F_{p,\infty}^s(\R_+,w_{\gamma};X)}\leq C \|v\|_{L^p(\R_+,w_{\gamma};X)}$ for all $v\in L^p(\R_+,w_{\gamma};X)$. Let $Ev$ denote the zero extension of $v$ to $\R$, let $R$ denote the restriction to $\R_+$, and let $\Dert_{\R} u= -u'\in L^p(\R,w_{\g};X)$ for $u\in \Do(\Dert_{\R})=W^{1,p}(\R,w_{\g};X)$. Then from \cite[the proof of Theorem 6.8(2)]{LMV18}, $(1+\Dert)^{-s}v = R (1+\Dert_{\R})^{-s} E v$ holds for $s=1$, and therefore, by definition of the fractional power the same holds for $s\in (0,1)$. This implies
\begin{align*}
\|(1+\Dert)^{-s} v\|_{F_{p,\infty}^s(\R_+,w_{\gamma};X)} &\leq \|(1+\Dert_{\R})^{-s} E v\|_{F_{p,\infty}^s(\R,w_{\gamma};X)}
\\ & \stackrel{(i)}{\eqsim} \|J_s(1+\Dert_{\R})^{-s} E v\|_{F_{p,\infty}^0(\R,w_{\gamma};X)}
\\ &  \stackrel{(ii)}{\lesssim} \|E v\|_{F_{p,\infty}^0(\R,w_{\gamma};X)}
\\ & \lesssim \|E v\|_{L^p(\R,w_{\gamma};X)}\leq \|v\|_{L^p(\R_+,w_{\gamma};X)},
\end{align*}
where $J_s:=\Four^{-1}((1+|\cdot|^2)^{s/2}\Four(\cdot))$. Here $(i)$ follows from \cite[Proposition 3.9]{MV12} and $(ii)$ follows from Fourier multiplier theory in Triebel--Lizorkin spaces (see \cite[15.6]{Tr97} and the comments in \cite[Proposition 3.10]{MV12}).

It remains to prove the first inequality in \eqref{eq:equivhom}. For this we use the shorthand notation $L^p$ and $F^{s}_{p,q}$. By \eqref{eq:lem:sandwichHRplus} and \cite[Proposition 3.1.9]{Haase:2} we have
\[[u]_{F^{s}_{p,\infty}} + \|u\|_{L^p} \lesssim \|\Dert^{s} u\|_{L^p}+ \|u\|_{L^p}.
\]
Applying this to $u_{\lambda}(x) = u(\lambda x)$ and using the scaling properties implied by the difference norms and the expression \eqref{eq:Bpowers}, after rewriting we obtain
\[[u]_{F^{s}_{p,\infty}} + \lambda^{-s}\|u\|_{L^p} \lesssim \|\Dert^{s} u\|_{L^p}+ \lambda^{-s}\|u\|_{L^p}.
\]
Letting $\lambda\to \infty$ this gives the first inequality in \eqref{eq:equivhom}.
\end{proof}

\begin{remark}\label{rem:generalweights}
One can actually show that $\Dert$ is sectorial on $L^p(\R_+,w_{\gamma};X)$ for any $\gamma\in (-1,p-1)$. In this case $-\Dert$ no longer generates a $C_0$-semigroup for $\gamma\in (-1,0)$. However, the first part of Lemma \ref{lem:sandwichHRplus} on the embedding result $\Do(\Dert^s)\hookrightarrow F^{s}_{p,\infty}(\R_+,w_{\gamma};X)$ extends to this setting if one uses the Fourier multiplier argument of that proof. For this we only need to check that $\Dert$ is sectorial. For this consider $\Dert_{\R} u =- u'\in L^p(\R,w;X)$ for $u\in W^{1,p}(\R,w;X)$. This is sectorial for $w=1$. One can check that $\lambda(\lambda+\Dert_{\R})^{-1}$ is an integral operator with kernel $k_{\lambda}(t) = \lambda e^{t\lambda} \one_{(-\infty,0)}$ for $\lambda>0$. Now it remains to extrapolate in the weight $w_{\g}$ with $\g\in (-1,p-1)$ as in \cite{Kree,stein1957note} (also see \cite[Proposition 2.3]{PruSim04}).
Therefore, $\lambda(\lambda+\Dert_{\R})^{-1}$ is uniformly bounded on $L^p(\R,w_{\g};X)$ for $\lambda>0$ and hence $\Dert_{\R}$ is sectorial. Arguing as in \cite[Theorem 6.8]{LMV18} one sees that $\Dert$ is also sectorial. The above for instance implies that the first part of  Corollary \ref{c:t:trace_FF_FL_spaces;H-hom} can be extended to $\gamma_i\in (-1,0)$ (see the comments before the statement of Corollary \ref{c:t:trace_FF_FL_spaces;H-hom}).

Using the more advanced extrapolation theory from \cite[Corollary 2.10]{HanHyt} one can even obtain the above results for $w\in A_p$.
\end{remark}

For $s \in \R$ we set $\Hz^{s,p}(\R_+,w_{\g};X):=
\overline{C^{\infty}_{c}(\R_+;X)}^{H^{s,p}(\R_+,w_{\g};X)}$.
If $k<s - \tfrac{1+\gamma}{p}<k+1$ with $k\in \N_0\cup \{-1\}$, then (see \cite[Proposotion 6.4]{LMV18})
\[\Hz^{s,p}(\R_+,w_{\gamma};X) = \{u\in H^{s,p}(\R_+,w_{\gamma};X): \Tr^{k} u = 0 \},\]
where we define $\Tr^{-1} u=0$. In the next result we compare the fractional domain spaces of $\Dert$ and $\Der$ under the assumption that $X$ is a UMD space (see \cite{Analysis1} for details).
\begin{proposition}\label{prop:Hinftyder}
Let $X$ be a UMD space, $p\in (1, \infty)$, $\gamma\in (-1, p-1)$ and $s>0$. Then the operators $\Dert$ and $\Der$ have a bounded $H^\infty$-calculus of angle $\pi/2$, and
\begin{align*}
\Do(\Der^s) \hookrightarrow \Do(\Dert^s) & = H^{s,p}(\R_+,w_{\gamma};X).
\end{align*}
Moreover, if $s\notin \N_0+(1+\gamma)/p$, then
\begin{equation}\label{eq:fractionalBC}
\Do(\Der^s) = \Hz^{s,p}(\R_+,w_{\gamma};X), \ \text{and} \  \|\Dert^s u\|_{L^p(\R_+,w_{\gamma};X)} \eqsim \|\Der^s u\|_{L^p(\R_+,w_{\gamma};X)}.
\end{equation}
for all $u\in \Do(\Der^s)$.
\end{proposition}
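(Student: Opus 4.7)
The plan is to transfer the required properties from the full-line derivative operators $\Dert_{\R} = -d/dx$ (with domain $W^{1,p}(\R,w_{\g};X)$) and $\Der_{\R} = d/dx$ on $L^p(\R,w_{\g};X)$ to the half-line operators $\Dert$ and $\Der$ via a retraction-coretraction argument, following and extending \cite[Theorem 6.8]{LMV18}.

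First, I would establish the bounded $H^\infty$-calculus of $\Dert_{\R}$ and $\Der_{\R}$ of angle $\pi/2$ on $L^p(\R,w_{\g};X)$. For $f\in H^\infty_0(\Sigma_{\varphi})$ with $\varphi>\pi/2$, $f(\Dert_{\R})$ is the Fourier multiplier with symbol $\xi \mapsto f(i\xi)$, which satisfies Mikhlin's condition with constant $\|f\|_{H^\infty(\Sigma_{\varphi})}$; since $X$ is UMD and $w_{\g}\in A_p$, the weighted operator-valued Mikhlin theorem (via Rubio de Francia extrapolation) yields the required bound. One further uses the standard identification $H^{s,p}(\R,w_{\g};X)=\Do((1-\partial_x^2)^{s/2})$ as a Bessel potential space.

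Next, I would transfer these properties to the half-line. For $\Dert$ (no boundary condition), let $R$ denote the restriction to $\R_+$ and $E$ a bounded extension (such as $\Ext^m$ from Proposition \ref{prop:ext} for $m$ sufficiently large), so that $RE=\Id$. A direct computation with the anti-causal resolvent kernel of $\Dert_{\R}$ gives $(\lambda+\Dert)^{-1}=R(\lambda+\Dert_{\R})^{-1}E$ for $\lambda$ in a suitable subset of the resolvent set, and inserting this into \eqref{eq:dunford} yields $f(\Dert)=R\,f(\Dert_{\R})\,E$, transferring the bounded $H^\infty$-calculus. For $\Der$, the zero extension $E_0$ plays the role of $E$: the resolvent kernel of $\Der_{\R}$ is causal, so $(\lambda+\Der_{\R})^{-1}$ preserves support in $\overline{\R_+}$, and the same argument applies. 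Combining $(1+\Dert)^s=R(1+\Dert_{\R})^sE$ with Definition \ref{def:function_spaces_intervals}, together with the equivalence $\Do((1+\Dert)^s)\eqsim \Do(\Dert^s)$ (with equivalent graph norms, provided by the bounded $H^\infty$-calculus of $\Dert$), then yields $\Do(\Dert^s)=H^{s,p}(\R_+,w_{\g};X)$.

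For the remaining claims, the embedding $\Do(\Der^s)\hookrightarrow\Do(\Dert^s)$ is immediate: the zero extension of $u\in\Do(\Der^s)$ belongs to $H^{s,p}(\R,w_{\g};X)$, so its restriction lies in $H^{s,p}(\R_+,w_{\g};X)=\Do(\Dert^s)$. For \eqref{eq:fractionalBC} when $s\notin \N_0+(1+\g)/p$, the zero-extension description of $\Do(\Der^s)$ coincides, by the trace characterization of $\Hz^{s,p}$ recalled before the proposition, with $\Hz^{s,p}(\R_+,w_{\g};X)$; the norm equivalence follows because both $\|\Dert^s u\|_{L^p}$ and $\|\Der^s u\|_{L^p}$ are equivalent to the $\Hz^{s,p}$-norm on this common subspace. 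The main obstacle will be justifying the retraction formula $f(A)=R\,f(A_{\R})\,E$ uniformly in $f\in H^\infty(\Sigma_{\varphi})$, together with the careful handling of the fact that $0\in\sigma(\Dert)$ when identifying $\Do(\Dert^s)$ with the Bessel potential space; both issues are resolved by passing through $(1+A)^s$ and using the bounded $H^\infty$-calculus to equate graph norms.
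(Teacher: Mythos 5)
Your overall strategy for the first part — transferring the bounded $H^\infty$-calculus and the identification $\Do(\Dert^s)=H^{s,p}(\R_+,w_\gamma;X)$ from the full line to the half line via a retraction-coretraction argument together with the weighted operator-valued Mikhlin theorem — is indeed the content of the results in \cite{LMV18} that the paper simply cites, so that part is fine, albeit re-derived rather than quoted.

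However, the final step contains a genuine gap. You argue that the \emph{homogeneous} equivalence $\|\Dert^s u\|_{L^p}\eqsim\|\Der^s u\|_{L^p}$ follows because both sides are ``equivalent to the $\Hz^{s,p}$-norm.'' But the $\Hz^{s,p}$-norm is the \emph{inhomogeneous} Bessel potential norm, and since $0\notin\rho(\Der)$ and $0\notin\rho(\Dert)$ on $L^p(\R_+,w_\gamma;X)$, the quantities $\|\Der^s u\|_{L^p}$ and $\|\Dert^s u\|_{L^p}$ are strictly weaker homogeneous seminorms. What the identifications with $\Hz^{s,p}$ and $H^{s,p}$ give is the two-sided graph-norm equivalence $\|u\|_{L^p}+\|\Dert^s u\|_{L^p}\eqsim\|u\|_{L^p}+\|\Der^s u\|_{L^p}$, which does not imply the seminorm equivalence: the $\|u\|_{L^p}$ terms cannot simply be cancelled. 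This is precisely why the paper applies the inhomogeneous equivalence to the rescaled functions $u_\lambda=u(\lambda\,\cdot\,)$ (using \cite[Proposition 3.1.12]{Haase:2} to track the scaling of the fractional powers) and then lets $\lambda\to\infty$ to kill the $L^p$ contribution; your proof needs this scaling argument or an equivalent device.

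A secondary remark: your argument for the embedding $\Do(\Der^s)\hookrightarrow\Do(\Dert^s)$ relies on the zero-extension characterization, which is exactly the $\Hz^{s,p}$-identification that is only established for $s\notin\N_0+(1+\gamma)/p$; the embedding, however, is asserted for \emph{all} $s>0$. The paper avoids this by going through complex interpolation $[L^p,\Do(\Der^n)]_{s/n}\hookrightarrow[L^p,\Do(\Dert^n)]_{s/n}$ (valid thanks to the bounded imaginary powers and \cite[Theorem 6.6.9]{Haase:2}), which sidesteps the exceptional values. You should either argue similarly or explicitly handle the exceptional $s$.
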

\begin{proof}
The identities for the fractional domain spaces are immediate from \cite[Proposition 5.6 and Theorem 6.8]{LMV18}. The latter paper also gives that
$\Der$ and $\Dert$ have a bounded $H^\infty$-calculus of angle $\pi/2$, and in particular bounded imaginary powers.
To prove the embedding $\Do(\Der^s) \hookrightarrow \Do(\Dert^s)$ we use the previous identities with an integer $n>s$, and \cite[Theorem 6.6.9]{Haase:2} to find
\begin{align*}
\Do(\Der^s) & = [L^p(\R_+, w_{\gamma};X),\Do(\Der^n)]_{\frac{s}{n}}
\\ & = [L^p(\R_+, w_{\gamma};X),\Hz^{n,p}(\R_+,w_{\gamma};X)]_{\frac{s}{n}} \\ & \hookrightarrow [L^p(\R_+,w_{\gamma};X),H^{n,p}(\R_+,w_{\gamma};X)]_{\frac{s}{n}}
\\ & =[L^p(\R_+, w_{\gamma};X),\Do(\Dert^n)]_{\frac{s}{n}} = \Do(\Dert^s).
\end{align*}
For the final part of \eqref{eq:fractionalBC}, it suffices to consider $s\in (0,1)$, and by density $u\in \Do(\Der)$. Note that the first part implies that \[\|\Dert^s u\|_{L^p(\R_+,w_{\gamma};X)} + \|u\|_{L^p(\R_+,w_{\gamma};X)} \eqsim \|\Der^s u\|_{L^p(\R_+,w_{\gamma};X)} + \|u\|_{L^p(\R_+,w_{\gamma};X)}.\]
Applying the latter to $u_{\lambda} = u(\lambda x)$, and using \cite[Proposition 3.1.12]{Haase:2}
one obtains
\[\|\Dert^s u\|_{L^p(\R_+,w_{\gamma};X)} + \lambda^{-s}\|u\|_{L^p(\R_+,w_{\gamma};X)} \eqsim \|\Der^s u\|_{L^p(\R_+,w_{\gamma};X)} + \lambda^{-s}\|u\|_{L^p(\R_+,w_{\gamma};X)}.\]
Letting $\lambda\to \infty$, the desired estimate follows.
\end{proof}
It would be interesting to know whether \eqref{eq:fractionalBC} holds without the  condition that $X$ is a UMD space.

Finally, we collect some standard properties of $\Der$ and $\Der_T$. Here for $T\in (0,\infty]$, $p\in (1, \infty)$, and $\gamma\in (-1,p-1)$ we set $\Der_T u = u'$  with $\Do(\Der_T) = \Wz^{1,p}(0,T,w_{\gamma};X)$, where the boundary condition only applies to the left endpoint (see below \eqref{eq:defB}). By setting $\Der_{\infty} = \Der$, we will treat both cases at the same time. We claim that for all $\lambda \in \C_+$ with $\Re(\lambda)>0$,
\begin{equation}\label{eq:resolventD}
 (\lambda + \Der_T)^{-1} f(t) = \int_0^t e^{-\lambda(t-s)}f(s) ds, \ \  f\in L^p(0,T,w_{\gamma};X).
\end{equation}
It is standard to check \eqref{eq:resolventD} for $f\in C^\infty_c([0,T];X)$.
Therefore, it remains to check the boundedness of the operator on the RHS of \eqref{eq:resolventD}. The uniform boundedness of $\Re(\lambda)(\lambda + \Der_T)^{-1}$  for $\gamma=0$ is clear from Young's inequality. The boundedness for $\gamma\in (-1, p-1)$ can be deduced from the case $\gamma=0$ by the same extrapolation argument as referred to in Remark \ref{rem:generalweights}. Moreover, if $T<\infty$, one can additionally check that $\Der_T$ is invertible and the inverse is given by the RHS of \eqref{eq:resolventD} for $\lambda=0$, which can be checked via Hardy--Young's inequality (see \eqref{eq:HardyYoung} below).

The above implies that $\Der_T$ is a sectorial operator and since
\begin{align}\label{eq:idenresolveTinfty}
(\lambda+\Der_T)^{-1}f = ((\lambda+\Der)^{-1} E_T f)|_{(0,T)},
\end{align}
where $E_T$ denotes the extension by zero on $(0,\infty)$, it also follows that $\Der_T$ inherits the bounded $H^\infty$-calculus of angle $\pi/2$ of $\Der$ (see Proposition \ref{prop:Hinftyder}) with uniform estimates in $T$. Moreover, by \eqref{eq:idenresolveTinfty} and the extended calculus (see \cite[Section 3.1]{Haase:2})
\begin{align}\label{eq:DerTDer}
\Der_T^{\alpha} f = \Der^{\alpha} f|_{(0,T)}, \qquad \alpha\in \R, \ f\in \Do(\Der^{\alpha}),
\end{align}
and this extends to $f\in \Dd(\Der^{\alpha})$ by density and closedness.

Next we claim that
\begin{align}\label{eq:DerTinverseid}
 \Der_T^{-\alpha} f = \mathcal{K}_{\alpha}* E_T f, \qquad \alpha>0, \ f\in \Ran(\Der_T^{ \alpha}),
\end{align}
where $\mathcal{K}_{\alpha}(t):=\frac{|t|^{\alpha-1} \one_{(0,\infty)}(t)}{\Gamma(\alpha)}$. Indeed, the case $\alpha \in (0,1)$ follows from \cite[Corollary 3.1.14 and Proposition 3.2.1]{Haase:2} and elementary calculations.
Now by analyticity in $\alpha$, \eqref{eq:DerTinverseid} extends to all $\alpha>0$. Since $0\in \rho(\Der_T)$ for all $T<\infty$, we have $\Ran(\Der_T^{ \alpha}) = L^p(0,T,w_{\gamma};X)$. In the case $T=\infty$ and $\alpha>0$ one has
\begin{equation}\label{eq:inverseKalpha}
\mathcal{K}_{\alpha}* f\in \Dd(\Der^{\alpha}) \ \ \text{and} \ \  \Der^{\alpha} \mathcal{K}_{\alpha}*f = f, \ \ \ f\in L^p(\R_+,w_{\gamma};X).
\end{equation}
To see this, by density we may assume $f\in \Ran(\Do(\Der^{\alpha}))$ and by \eqref{eq:DerTDer} and \eqref{eq:DerTinverseid} we get, for all $T<\infty$,
\[(\Der^{-\alpha} f)|_{(0,T)}=\Der_{T}^{-\alpha} f =\mathcal{K}_{\alpha}* E_T f= (\mathcal{K}_{\alpha} * f)|_{(0,T)} \ \ \text{on $(0,T)$.}\]
The arbitrariness of $T<\infty$ readily yields \eqref{eq:inverseKalpha}.

\subsection{Sobolev embedding with power weights}
The following embedding result is the half line version of one of the main results of \cite{MV12} by Meyries and the third named author.
\begin{theorem}[Sobolev embedding]
\label{t:sobolevTriebel}
Let $I \in \{\R_+,\R\}$, $X$ a Banach space, $1<p_0\leq p_1<\infty$, $q_0,q_1\in [1,\infty]$, $s_0>s_1$ and $\g_0,\g_1>-1$. Assume
\begin{equation}\label{eq:condSobembg}
\frac{\g_0}{p_0}\geq \frac{\g_1}{p_1},\qquad s_0-\frac{1+\g_0}{p_0}=s_1-\frac{1+\g_1}{p_1}.
\end{equation}
Then
\begin{equation}\label{eq:sob_embedding_Triebel}
F^{s_0}_{p_0,q_0}(I,w_{\g_0};X) \hookrightarrow   F^{s_1}_{p_1,q_1}(I,w_{\g_1};X).
\end{equation}
If in addition $s_0,s_1>0$ and $\g_i < p_i-1$ for $i\in \{0,1\}$ and we are given $\N\ni m_i>s_i$ for $i\in \{0,1\}$, then for all $f \in F^{s_0}_{p_0,q_0}(I,w_{\g_0};X)$ one has
\begin{equation}
\label{eq:sob_embedding_Triebel_seminorm}
[f]_{F^{s_1}_{p_1,q_1}(I,w_{\g_1};X)}^{(m_1)}
\lesssim [f]_{F^{s_0}_{p_0,q_0}(I,w_{\g_0};X)}^{(m_0)}.
\end{equation}
\end{theorem}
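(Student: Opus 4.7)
The plan is to prove the theorem in three steps: first establish \eqref{eq:sob_embedding_Triebel} on $\R$, then transfer it to $\R_+$ by the quotient definition, and finally deduce \eqref{eq:sob_embedding_Triebel_seminorm} by combining the difference characterization from Proposition \ref{prop:equivalentNormF} with a scaling argument of the type already used in Remark \ref{rem:seminorms}.

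For \eqref{eq:sob_embedding_Triebel} with $I=\R$, this is precisely the Jawerth--Franke-type Sobolev embedding in weighted vector-valued Triebel--Lizorkin spaces established in \cite{MV12} by Meyries and the third named author, so I would simply cite it. For $I = \R_+$, I would use the quotient definition (Definition \ref{def:function_spaces_intervals}): given $f \in F^{s_0}_{p_0,q_0}(\R_+,w_{\g_0};X)$ and $\e>0$, choose $g \in F^{s_0}_{p_0,q_0}(\R,w_{\g_0};X)$ with $g|_{\R_+} = f$ and norm within a factor $(1+\e)$ of $\|f\|_{F^{s_0}_{p_0,q_0}(\R_+,w_{\g_0};X)}$; apply the $\R$-version to $g$ and restrict to $\R_+$; then send $\e\to 0$.

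For \eqref{eq:sob_embedding_Triebel_seminorm}, the conditions $s_i>0$, $\g_i \in (-1,p_i-1)$, and $p_i \in (1,\infty)$ (we also need $q_i \in [1,\infty]$) allow us to invoke Proposition \ref{prop:equivalentNormF}, so that the embedding \eqref{eq:sob_embedding_Triebel} on $I$ becomes
\[
\|f\|_{L^{p_1}(I,w_{\g_1};X)} + [f]_{F^{s_1}_{p_1,q_1}(I,w_{\g_1};X)}^{(m_1)}
\lesssim \|f\|_{L^{p_0}(I,w_{\g_0};X)} + [f]_{F^{s_0}_{p_0,q_0}(I,w_{\g_0};X)}^{(m_0)}.
\]
Since $I \in \{\R_+,\R\}$ is invariant under the dilation $t \mapsto \lambda t$ for $\lambda>0$, I would apply this inequality to $f_{\lambda}(t) := f(\lambda t)$. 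A direct computation using the change of variables $h \mapsto \lambda h$ in the definition of $d^m_t$ and the fact that $V^m_I(x,t)$ scales compatibly under $x\mapsto \lambda x$, $t\mapsto \lambda t$ gives
\[
\|f_\lambda\|_{L^{p}(I,w_{\g};X)} = \lambda^{-(1+\g)/p}\|f\|_{L^{p}(I,w_{\g};X)},\qquad
[f_\lambda]_{F^{s}_{p,q}(I,w_{\g};X)}^{(m)} = \lambda^{s-(1+\g)/p}[f]_{F^{s}_{p,q}(I,w_{\g};X)}^{(m)}.
\]
Under \eqref{eq:condSobembg} the exponent $\sigma := s_i - (1+\g_i)/p_i$ is the same for $i=0,1$. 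Substituting and dividing by $\lambda^\sigma$ yields
\[
\lambda^{-s_1}\|f\|_{L^{p_1}(I,w_{\g_1};X)} + [f]_{F^{s_1}_{p_1,q_1}(I,w_{\g_1};X)}^{(m_1)}
\lesssim \lambda^{-s_0}\|f\|_{L^{p_0}(I,w_{\g_0};X)} + [f]_{F^{s_0}_{p_0,q_0}(I,w_{\g_0};X)}^{(m_0)}.
\]
Since $s_0,s_1>0$ and both $L^{p_i}$-norms are finite (by the already established embedding), letting $\lambda\to\infty$ kills the $L^{p_i}$ terms and gives \eqref{eq:sob_embedding_Triebel_seminorm}.

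The main obstacle is the bookkeeping in the scaling identity for $[f_\lambda]^{(m)}$, in particular checking that $V^m_{\R_+}(x,t)$ is mapped consistently by $h\mapsto \lambda h$, $x\mapsto \lambda x$, $t\mapsto \lambda t$; this is a direct verification from the definition, but needs to be done carefully to ensure that the scaling identity is an equality and not just an inequality. Once this is in place, the remainder is routine.
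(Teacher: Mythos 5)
Your proposal is correct and follows essentially the same route as the paper's own proof: cite \cite{MV12} for the $I=\R$ case, restrict to $\R_+$ via the quotient definition, then combine Proposition \ref{prop:equivalentNormF} with the scaling identities $\|f_\lambda\|_{L^p(I,w_\g;X)}=\lambda^{-(1+\g)/p}\|f\|_{L^p(I,w_\g;X)}$ and $[f_\lambda]^{(m)}_{F^s_{p,q}(I,w_\g;X)}=\lambda^{s-(1+\g)/p}[f]^{(m)}_{F^s_{p,q}(I,w_\g;X)}$ (which indeed hold as exact equalities on both $\R$ and $\R_+$, since $V^m_I(\lambda x,\lambda t)=\lambda V^m_I(x,t)$), and let $\lambda\to\infty$. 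The only cosmetic difference is that you retain the term $\lambda^{-s_1}\|f\|_{L^{p_1}(I,w_{\g_1};X)}$ on the left-hand side, whereas the paper discards the $L^{p_1}$ contribution at the outset by bounding $[f_\lambda]_{F^{s_1}}\lesssim\|f_\lambda\|_{F^{s_1}}$ directly; since the term you keep is nonnegative, this does not affect the conclusion.
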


\begin{proof}
The embedding \eqref{eq:sob_embedding_Triebel} for $I=\R$ is contained in \cite{MV12}, from which the corresponding embedding for $I=\R_+$ follows.
To prove the last claim, fix $f \in F^{s_0}_{p_0,q_0}(I,w_{\g_0};X)$.
Let $\lambda>0$ and set $f_{\lambda}:=f(\lambda \cdot)$. By applying the norm inequality associated with the embedding \eqref{eq:sob_embedding_Triebel} to $f_{\lambda}$, we obtain
\begin{align*}
[f_{\lambda}]_{F^{s_1}_{p_1,q_1}(I,w_{\g_1};X)}^{(m_0)}
&\lesssim \|f_{\lambda}\|_{F^{s_1}_{p_1,q_1}(I,w_{\g_1};X)}\\
 &\lesssim \|f_{\lambda}\|_{F^{s_0}_{p_0,q_0}(I,w_{\g_0};X)}\\
 &= \|f_{\lambda}\|_{L^{p_0}(I,w_{\g};X)}+ [f_{\lambda}]_{F^{s_0}_{p_0,q_0}(I,w_{\g_0};X)}^{(m_1)}.
\end{align*}
By standard computations, $[f_{\lambda}]_{F^{s_i}_{p_i,q_i}(I,w_{\g_i};X)}^{(m_i)}=\lambda^{\delta_i}[f]_{F^{s_i}_{p_i,q_i}(I,w_{\g_i};X)}^{(m_i)}$ for $i\in \{0,1\}$ where $\delta_i:=s_i-\frac{1+\g_i}{p_i}$. Then, by assumption we have $\delta_0=\delta_1$, and we find that
\begin{align*}
[f]_{F^{s_1}_{p_1,q_1}(I,w_{\g_1};X)}^{(m_1)}
\lesssim \lambda^{-s_0}\|f\|_{L^{p_0}(I,w_{\g_0};X)}
+[f]_{F^{s_0}_{p_0,q_0}(I,w_{\g_0};X)}^{(m_0)}.
\end{align*}
The claim follows by letting $\lambda\to \infty$.
\end{proof}

\begin{corollary}[Sobolev embedding]
\label{c:t:sobolevTriebel}
Let $1<p_0\leq p_1<\infty$, $q_0 \in [1,\infty]$, $s_0>0$, $-1<\g_0<p_0-1$, $-1<\g_0<p_1-1$ and $\N\ni m_0>s_0$. Assume \eqref{eq:condSobembg}.
Then
\begin{align}\label{eq:c:sob_embedding_Triebel}
F^{s_0}_{p_0,q_0}(I,w_{\g_0};X)& \hookrightarrow   L^{p_1}(I,w_{\g_1};X).
\end{align}
and for all $f \in F^{s_0}_{p_0,q_0}(I,w_{\g_0};X)$,
\begin{equation}
\label{eq:c:sob_embedding_Triebel_seminorm}
\|f\|_{L^{p_1}(I,w_{\g_1};X)}
\lesssim [f]_{F^{s_0}_{p_0,q_0}(I,w_{\g_0};X)}^{(m_0)}.
\end{equation}
\end{corollary}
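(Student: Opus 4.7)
The plan is to deduce Corollary \ref{c:t:sobolevTriebel} from Theorem \ref{t:sobolevTriebel} by taking the target smoothness to be $s_1 = 0$, using the elementary embedding $F^{0}_{p_1,1}\hookrightarrow L^{p_1}$ to land in an $L^{p_1}$-space, and then obtaining the seminorm estimate \eqref{eq:c:sob_embedding_Triebel_seminorm} by the same scaling argument that appears at the end of the proof of Theorem \ref{t:sobolevTriebel}.

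For the embedding \eqref{eq:c:sob_embedding_Triebel}, I would apply the first half of Theorem \ref{t:sobolevTriebel} with $(s_1,q_1) = (0,1)$. The required conditions $s_0 > s_1 = 0$, $\g_0/p_0 \geq \g_1/p_1$, and $s_0 - (1+\g_0)/p_0 = -(1+\g_1)/p_1$ hold by assumption and yield
\[
F^{s_0}_{p_0,q_0}(I,w_{\g_0};X) \hookrightarrow F^{0}_{p_1,1}(I,w_{\g_1};X).
\]
Next I would record the embedding $F^{0}_{p_1,1}(I,w_{\g_1};X) \hookrightarrow L^{p_1}(I,w_{\g_1};X)$: for $I=\R$, one uses that $f = \sum_{k\geq 0} S_k f$ in $\mathscr{S}'(\R;X)$, applies the pointwise bound $\|f(x)\|_X \leq \sum_{k\geq 0}\|S_k f(x)\|_X$, and recognises the right-hand side in $L^{p_1}(\R,w_{\g_1})$ as $\|f\|_{F^0_{p_1,1}(\R,w_{\g_1};X)}$; the case $I = \R_+$ follows by taking an infimum over extensions in Definition \ref{def:function_spaces_intervals}. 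Chaining these two embeddings yields \eqref{eq:c:sob_embedding_Triebel}.

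For the seminorm estimate \eqref{eq:c:sob_embedding_Triebel_seminorm}, I would insert $f_\lambda := f(\lambda\,\cdot\,)$ for $\lambda > 0$ into the embedding just established. A change of variables, together with the identity $\lambda V^{m_0}_I(x,t) = V^{m_0}_I(\lambda x,\lambda t)$ for $I\in \{\R,\R_+\}$, gives the scaling relations
\[
\|f_\lambda\|_{L^{p_i}(I,w_{\g_i};X)} = \lambda^{-(1+\g_i)/p_i}\|f\|_{L^{p_i}(I,w_{\g_i};X)}, \quad i \in \{0,1\},
\]
and $[f_\lambda]^{(m_0)}_{F^{s_0}_{p_0,q_0}(I,w_{\g_0};X)} = \lambda^{\delta_0}[f]^{(m_0)}_{F^{s_0}_{p_0,q_0}(I,w_{\g_0};X)}$, with $\delta_0 := s_0-(1+\g_0)/p_0 = -(1+\g_1)/p_1$ by \eqref{eq:condSobembg}. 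Applying \eqref{eq:c:sob_embedding_Triebel} to $f_\lambda$ (and Proposition \ref{prop:equivalentNormF} on the right-hand side) produces
\[
\|f_\lambda\|_{L^{p_1}(I,w_{\g_1};X)} \lesssim \|f_\lambda\|_{L^{p_0}(I,w_{\g_0};X)} + [f_\lambda]^{(m_0)}_{F^{s_0}_{p_0,q_0}(I,w_{\g_0};X)},
\]
and substituting the scaling relations and dividing by $\lambda^{\delta_0}$ leads to
\[
\|f\|_{L^{p_1}(I,w_{\g_1};X)} \lesssim \lambda^{-s_0}\|f\|_{L^{p_0}(I,w_{\g_0};X)} + [f]^{(m_0)}_{F^{s_0}_{p_0,q_0}(I,w_{\g_0};X)}.
\]
Since $s_0 > 0$ and $\|f\|_{L^{p_0}(I,w_{\g_0};X)} < \infty$ (recall $F^{s_0}_{p_0,q_0} \hookrightarrow L^{p_0}$), letting $\lambda\to\infty$ kills the first right-hand-side term and delivers \eqref{eq:c:sob_embedding_Triebel_seminorm}.

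No substantive obstacle is anticipated: beyond the routine scaling computations, the only ingredients are the quotation of Theorem \ref{t:sobolevTriebel}, the elementary Littlewood--Paley estimate $F^0_{p,1}\hookrightarrow L^p$, and the difference-seminorm characterization of Proposition \ref{prop:equivalentNormF}.
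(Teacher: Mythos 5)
Your proposal is correct and follows essentially the same route as the paper: apply Theorem \ref{t:sobolevTriebel} with $(s_1,q_1)=(0,1)$, use the elementary embedding $F^0_{p_1,1}\hookrightarrow L^{p_1}$ (which the paper quotes from \eqref{eq:EmbElementary}), and then run the scaling argument with $f_\lambda = f(\lambda\cdot)$ exactly as in the last step of the proof of Theorem \ref{t:sobolevTriebel}. The scaling relations you verify, the identity $\delta_0=\delta_1$ from \eqref{eq:condSobembg}, and the limit $\lambda\to\infty$ all match the paper's argument.
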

\begin{proof}
The embedding \eqref{eq:c:sob_embedding_Triebel} follows from the Sobolev embedding \eqref{eq:sob_embedding_Triebel} with $s_1=0$ and $q_1=1$ and \eqref{eq:EmbElementary}.
The inequality \eqref{eq:c:sob_embedding_Triebel_seminorm} can subsequently be derived by a scaling argument, as in the proof of Theorem~\ref{t:sobolevTriebel}.
\end{proof}

We conclude this subsection with a useful interpolation result from \cite[Theorem 3.1]{MV14} by Meyries and the third named author, where there is a misprint in the definition of $w_{\theta}$.

\begin{theorem}[Mixed-derivatives]
\label{t:interpolationA}
Let $I \in \{\R_+,\R\}$ and let $X_0,X_1$ be an interpolation couple of Banach spaces. Let $\theta\in (0,1)$ and $X_{\theta}$ be a Banach space such that $X_0\cap X_1 \subseteq X_{\theta}\subseteq X_0+X_1$ and
\begin{equation}
\label{eq:interpolation_inequality}
\|x\|_{X_{\theta}} \leq C  \|x\|_{X_0}^{1-\theta}\|x\|_{X_1}^{\theta}, \quad x\in X_{0}\cap X_1.
\end{equation}
Assume $p_0,p_1,p\in (1,\infty)$, $q_0,q_1,q\in [1,\infty]$, $w,w_0, w_1\in A_{\infty}$ satisfy
$$
\frac{1}{p}=\frac{1-\theta}{p_0}+\frac{\theta}{p_1},
\quad \frac{1}{q}=\frac{1-\theta}{q_0}+\frac{\theta}{q_1},
\quad\text{and}\quad
w^{1/p}_{\theta} = w_0^{(1-\theta)/p_0} w_1^{\theta/p_1}.
$$
Let $-\infty<s_1<s_0<\infty$ and set $s:=s_1+(1-\theta)(s_0-s_1)$.
Then for $\mathcal{A}\in \{B, F\}$,
$$
\mathcal{A}^{s_0}_{p_0,q_0}(I,w_{0};X_0)\cap \mathcal{A}^{s_1}_{p_1,q_1}(I,w_{1};X_1)\hookrightarrow
\mathcal{A}_{p,q}^{s}(I,w_\theta;X_{\theta}),
$$
and for all $f\in \mathcal{A}^{s_0}_{p_0,q_0}(I,w_{0};X_0)\cap \mathcal{A}^{s_1}_{p_1,q_1}(I,w_{1};X_1)$ one has
\begin{equation}
\label{eq:mixed_derivative_norm}
\|f\|_{\mathcal{A}_{p,q}^{s}(I,w_{{\theta}};X_{\theta})} \lesssim
 \|f\|_{\mathcal{A}^{s_0}_{p_0,q_0}(I,w_{0};X_0)}^{1-\theta}\|f\|_{\mathcal{A}^{s_1}_{p_1,q_1}(I,w_{1};X_1)}^{\theta}.
\end{equation}
Moreover, if $s_i>0$, $w_i(t)=w_{\g_i}(t) = |t|^{\gamma_i}$ with $\g_i\in (-1,p_i-1)$ for $i\in \{0,1\}$, then
\begin{equation}
\label{eq:mixed_derivative_seminorm}
[f]_{\mathcal{A}_{p,q}^{s}(I,w_{\g_{\theta}};X_{\theta})} \lesssim
[f]_{\mathcal{A}^{s_0}_{p_0,q_0}(I,w_{\g_0};X_0)}^{1-\theta}
[f]_{\mathcal{A}^{s_1}_{p_1,q_1}(I,w_{\g_1};X_1)}^{\theta},
\end{equation}
where $\frac{\g_{\theta}}{p}=(1-\theta)\frac{\g_0}{p_0}+\theta\frac{\g_1}{p_1}$.
\end{theorem}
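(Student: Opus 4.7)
My plan is to prove the norm inequality \eqref{eq:mixed_derivative_norm} first, by a pointwise interpolation of dyadic blocks combined with two applications of Hölder's inequality, and then to derive the seminorm inequality \eqref{eq:mixed_derivative_seminorm} by a scaling argument modelled on the proofs of Theorem~\ref{t:sobolevTriebel} and Corollary~\ref{c:t:sobolevTriebel}.

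For the first inequality, I would choose a single dyadic decomposition that simultaneously yields equivalent norms on each of the three spaces involved: on $I=\R$ the standard Littlewood--Paley decomposition $(S_k)_{k\geq 0}$ from \eqref{eq:def_S_k}, and on $I=\R_+$ the local means decomposition of Theorem~\ref{thm:local_mean_Rychkov} (together with its Besov analogue) with test functions $\phi_0,\phi$ whose moment condition is of order $N\geq\max(\lfloor s_0\rfloor,\lfloor s_1\rfloor,\lfloor s\rfloor)$, so that one and the same decomposition reproduces all three norms. Writing $T_k f$ for the $k$-th dyadic block and noting that $T_k f(x)\in X_0\cap X_1$ pointwise a.e.\ (since $T_k$ is a scalar convolution and thus commutes with the inclusions $X_i\hookrightarrow X_0+X_1$), the interpolation hypothesis \eqref{eq:interpolation_inequality} together with $s=(1-\theta)s_0+\theta s_1$ gives the pointwise estimate
\begin{equation*}
2^{sk}\|T_k f(x)\|_{X_\theta}\leq C\bigl(2^{s_0 k}\|T_k f(x)\|_{X_0}\bigr)^{1-\theta}\bigl(2^{s_1 k}\|T_k f(x)\|_{X_1}\bigr)^{\theta}.
\end{equation*}
For $\mathcal{A}=F$ I would apply Hölder in $\ell^q$ pointwise in $x$ (using $1/q=(1-\theta)/q_0+\theta/q_1$), followed by the generalized Hölder inequality in $L^p(I)$ (using $1/p=(1-\theta)/p_0+\theta/p_1$ and the weight identity $w_\theta=w_0^{p(1-\theta)/p_0}w_1^{p\theta/p_1}$). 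For $\mathcal{A}=B$ the two Hölder steps are performed in the opposite order. In either case the parameter constraints fit exactly and yield \eqref{eq:mixed_derivative_norm}.

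To obtain \eqref{eq:mixed_derivative_seminorm}, I would apply \eqref{eq:mixed_derivative_norm} to the rescaled function $f_\lambda=f(\lambda\,\cdot\,)$ and invoke the difference characterization from Proposition~\ref{prop:equivalentNormF}, which gives $\|g\|_{\mathcal{A}^{s_i}_{p_i,q_i}}\eqsim \|g\|_{L^{p_i}(I,w_{\gamma_i};X_i)}+[g]_{\mathcal{A}^{s_i}_{p_i,q_i}}^{(m_i)}$ for $\mathcal{A}=F$ (and analogously for $B$). Using the scalings $\|f_\lambda\|_{L^{p_i}(I,w_{\gamma_i};X_i)}=\lambda^{-(1+\gamma_i)/p_i}\|f\|_{L^{p_i}(I,w_{\gamma_i};X_i)}$ and $[f_\lambda]^{(m_i)}=\lambda^{s_i-(1+\gamma_i)/p_i}[f]^{(m_i)}$, together with the compatibility $s-(1+\gamma_\theta)/p=(1-\theta)(s_0-(1+\gamma_0)/p_0)+\theta(s_1-(1+\gamma_1)/p_1)$ forced by the parameter relations, I would expand both sides using $(a+b)^c\leq 2^c(a^c+b^c)$. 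Since $s_i,s>0$, the seminorm scaling exponent strictly exceeds the $L^{p_i}$-norm scaling exponent; dividing by $\lambda^{s-(1+\gamma_\theta)/p}$ and letting $\lambda\to\infty$ then kills every mixed and $L^{p_i}$-contribution on the right and isolates $\bigl([f]^{(m_0)}_{\mathcal{A}^{s_0}_{p_0,q_0}}\bigr)^{1-\theta}\bigl([f]^{(m_1)}_{\mathcal{A}^{s_1}_{p_1,q_1}}\bigr)^{\theta}$ as the leading coefficient, exactly in the spirit of the proof of Theorem~\ref{t:sobolevTriebel}.

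The main technical obstacle is to ensure that a single dyadic decomposition is admissible for all three spaces simultaneously, which on $\R_+$ is handled by Theorem~\ref{thm:local_mean_Rychkov} with sufficiently many vanishing moments; this sidesteps the need for an extension operator bounded between all the involved weighted $A_\infty$ classes, a setting not directly covered by Proposition~\ref{prop:ext}. Once this setup is in place, the remainder amounts to systematic bookkeeping of Hölder exponents, weight identities, and scaling factors.
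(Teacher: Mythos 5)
Your proposal follows essentially the same route as the paper's proof: the paper also obtains \eqref{eq:mixed_derivative_norm} on $\R$ by two applications of H\"older's inequality to the dyadic blocks (citing \cite[Theorem~3.1]{MV14}), passes to $\R_+$ via the intrinsic local means characterization of Theorem~\ref{thm:local_mean_Rychkov}, and derives \eqref{eq:mixed_derivative_seminorm} from Proposition~\ref{prop:equivalentNormF} by the same dilation argument as in Theorem~\ref{t:sobolevTriebel}. Your explicit pointwise interpolation estimate for $2^{sk}\|T_k f(x)\|_{X_\theta}$ and the remark that the order of the two H\"older steps must be swapped between $F$- and $B$-spaces are just the details the paper leaves implicit, and the $(a+b)^c\leq 2^c(a^c+b^c)$ expansion is unnecessary since taking $\lambda\to\infty$ directly in the factored product already isolates the seminorms.
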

\begin{proof}
The inequality \eqref{eq:mixed_derivative_norm} for $I=\R$ follows from the definitions by applying the H\"{o}lder inequality twice, see \cite[Theorem 3.1]{MV14}). The case $I=\R_+$ can in the same way be obtained from the intrinsic characterization in Theorem~\ref{thm:local_mean_Rychkov}.
The homogeneous estimate \eqref{eq:mixed_derivative_seminorm} subsequently follows
by Proposition~\ref{prop:equivalentNormF} with a scaling argument as in the proof of Theorem \ref{t:sobolevTriebel}.
\end{proof}

In the case $q_0=p_0$, $q_1=p_1$, $w_0=w_{\g_0}$ with $\g_0 \in (-1,p_0-1)$ and $w_1=w_{\g_1}$ with $\g_1 \in (-1,p_1-1)$, the result for $I=\R_+$ can also be derived by an extension argument from Proposition~\ref{prop:ext}. In this way the more advanced result of Theorem~\ref{thm:local_mean_Rychkov}, can be avoided.

\section{Trace Embeddings for Triebel-Lizorkin spaces}
\label{s:trace}
In this section we study trace embedding results for anisotropic function spaces.

\subsection{Traces at the origin}
We start with a lemma on the well-definedness of higher order trace operators in the isotropic setting.

\begin{lemma}\label{lemma:t:trace_FF_FL_spaces;X_0}
Let  $I \in \{\R_+,\R\}$, $X$ a Banach space, $p \in (1,\infty)$, $q \in [1,\infty]$, $\g \in (-1,\infty)$, $s \in \R$ and $k \in \N_0$.
Let $s > k+\frac{1+\g}{p}$. Then
the $k$-th order trace operator $\Tr^k:F^s_{p,q}(I,w_\g;X) \cap C^k(\bar{I};X) \to X$ given by $\Tr^k u=\partial^k_t u\,(0)$ has a unique extension to a bounded linear operator $\Tr^k:F^s_{p,q}(I,w_\g;X) \to X$ that is also bounded from $\big(F^s_{p,q}(I,w_\g;X),\|\,\cdot\,\|_{F^\sigma_{p,1}(I,w_\g;X)}\big)$ to $X$ for every $\sigma \in (k+\frac{1+\g}{p},s)$.

The same holds with $F^s_{p,q}(I,w_\g;X)$ replaced by $H^{s,p}(I,w_\g;X)$. \end{lemma}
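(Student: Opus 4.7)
The plan has four parts: (i) reduction to $k=0$ via differentiation; (ii) the key estimate $\|u(0)\|_X \leq C\|u\|_{F^\sigma_{p,1}(I,w_\gamma;X)}$ for $\sigma \in ((1+\gamma)/p, s)$ and smooth $u$; (iii) extension by density; (iv) the Bessel case via the sandwich \eqref{eq:EmbElementary}.

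\textbf{Reduction to $k=0$.} Differentiation $\partial_t^k$ is bounded from $F^s_{p,q}(I,w_\gamma;X)$ into $F^{s-k}_{p,q}(I,w_\gamma;X)$ (and on the $F^\sigma_{p,1}$-scale analogously), via Littlewood--Paley Fourier multiplier theory for the $A_\infty$ weight $w_\gamma$, and $\Tr^k u = \Tr^0(\partial_t^k u)$ for $u \in C^k(\bar I; X)$. Hence the general $k$ case follows from the case $k=0$, with $s,\sigma$ replaced by $s-k,\sigma-k$, under the equivalent hypothesis $s-k > (1+\gamma)/p$.

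\textbf{Key estimate for $k=0$.} Fix $\sigma \in ((1+\gamma)/p, s)$ and $u \in F^s_{p,q}(I,w_\gamma;X) \cap C(\bar I; X)$; I would split on the sign of $\gamma$. If $\gamma \geq 0$, Theorem \ref{t:sobolevTriebel} applied with $(p_0,q_0,s_0,\gamma_0)=(p,1,\sigma,\gamma)$ and $(p_1,q_1,s_1,\gamma_1)=(p,1,\sigma-\gamma/p,0)$ produces the embedding $F^\sigma_{p,1}(I,w_\gamma;X) \hookrightarrow F^{\sigma-\gamma/p}_{p,1}(I;X)$, and the hypothesis $\sigma > (1+\gamma)/p$ translates into $\sigma-\gamma/p > 1/p$, so the classical unweighted embedding $F^{\sigma-\gamma/p}_{p,1}(I;X) \hookrightarrow \BUC(\bar I;X)$ closes the estimate. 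If $\gamma \in (-1,0)$, then $w_\gamma \in A_p$, and I would extend $u$ to $\tilde u$ on $\R$ via Proposition \ref{prop:ext} (choosing $m>s$), decompose $\tilde u = \sum_{j\geq 0} S_j \tilde u$ into Littlewood--Paley blocks, and use a reproducing kernel $\chi_j$ with $\widehat{\chi_j} \equiv 1$ on $\supp \widehat{\varphi_j}$ together with H\"older's inequality to obtain the weighted Nikolskii estimate
\[ \|S_j \tilde u(0)\|_X \lesssim 2^{j(1+\gamma)/p} \|S_j \tilde u\|_{L^p(\R, w_\gamma; X)}, \]
where the required integrability $\int_\R |\chi_0(\tau)|^{p'} |\tau|^{-\gamma p'/p}\, d\tau < \infty$ is automatic because $\gamma < 0 < p-1$. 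A geometric-series summation yields $\|u(0)\|_X \lesssim \|\tilde u\|_{B^{\sigma'}_{p,1}(\R, w_\gamma; X)}$ for any $\sigma' \in ((1+\gamma)/p, \sigma)$, and the embedding $F^\sigma_{p,1}(I, w_\gamma; X) \hookrightarrow B^{\sigma'}_{p,1}(I, w_\gamma; X)$, which follows from $\|S_j u\|_{L^p(w_\gamma)} \leq 2^{-j\sigma}\|u\|_{F^\sigma_{p,1}(w_\gamma)}$ summed against $2^{j\sigma'}$, finishes the bound.

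\textbf{Density and Bessel case.} By Lemma \ref{l:density} (and Definition \ref{def:function_spaces_intervals}), $\Sc(\R;X)|_I$ is dense in $F^s_{p,q}(I, w_\gamma; X)$ for $q < \infty$, so the estimate above extends $\Tr^0$ uniquely to a bounded operator on $F^s_{p,q}(I, w_\gamma; X)$. The case $q = \infty$ reduces to this via the standard embedding $F^s_{p,\infty}(I, w_\gamma; X) \hookrightarrow F^\sigma_{p,1}(I, w_\gamma; X)$ for any $\sigma \in ((1+\gamma)/p, s)$, immediate from the Littlewood--Paley characterisation. The $H^{s,p}$-version then follows from the sandwich $F^s_{p,1} \hookrightarrow H^{s,p} \hookrightarrow F^s_{p,\infty}$ recorded in \eqref{eq:EmbElementary}. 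The principal obstacle is the constraint $\gamma \in (-1, p-1)$ in Proposition \ref{prop:ext}: when $\gamma \geq p-1$ the weight falls outside $A_p$, the half-line extension is unavailable, and the reproducing-kernel Nikolskii inequality breaks down simultaneously, which is precisely the reason for routing the case $\gamma \geq 0$ through the Sobolev reduction rather than through a direct Littlewood--Paley argument on $\R$.
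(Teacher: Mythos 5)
Your proposal is correct, and you do reach the statement, but your route is noticeably different from — and a bit more roundabout than — the one the paper takes. The paper's proof is uniform in $\gamma$: it picks $\tilde\gamma \in (-1,p-1)$ with $\tilde\gamma < \gamma$ and sets $\tilde s := s + (\tilde\gamma-\gamma)/p$ (so that the Sobolev index $s - (1+\gamma)/p$ is preserved), then invokes the Sobolev embedding Theorem~\ref{t:sobolevTriebel} to land in $F^{\tilde s}_{p,p}(I,w_{\tilde\gamma};X)$ with an $A_p$ weight, after which (a) the case $I=\R$ is disposed of by citing the Bessel-potential trace result of \cite[Proposition~6.3]{LMV18}, and (b) the case $I=\R_+$ reduces to $I=\R$ via the Besov extension operator of Proposition~\ref{prop:ext}, which is applicable precisely because $\tilde\gamma \in (-1,p-1)$. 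The uniqueness is then read off from the density of $F^s_{p,q}$ in $F^\sigma_{p,1}$. Your argument instead splits on the sign of $\gamma$: for $\gamma \geq 0$ you shift to the unweighted space and use the classical embedding into $\BUC$, while for $\gamma\in(-1,0)$ you extend to $\R$ and re-prove a weighted Nikolskii-type estimate by hand (which is essentially the content of \cite[Proposition~6.3]{LMV18}). Both branches are sound — in particular your identity $(p'-1+\gamma p'/p)/p' = (1+\gamma)/p$ giving the $2^{j(1+\gamma)/p}$ factor and the integrability of $|\chi_0|^{p'}|\,\cdot\,|^{-\gamma p'/p}$ for $\gamma<p-1$ are correct, and the final embedding $F^\sigma_{p,1}\hookrightarrow B^{\sigma'}_{p,1}$ closes the Besov-side argument. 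What your approach buys is a self-contained proof that does not lean on the external reference; what you give up is uniformity: the downward shift $\gamma \mapsto \tilde\gamma \in (-1,p-1)$, $\tilde\gamma<\gamma$, absorbs both of your cases at once, so there is no need to distinguish $\gamma\geq 0$ from $\gamma<0$ at all. Two small points of polish: Proposition~\ref{prop:ext} is stated for Besov, not Triebel--Lizorkin, spaces, so in the $\gamma\in(-1,0)$ branch you should pass from $F^\sigma_{p,1}$ to $B^{\sigma'}_{p,1}$ \emph{before} extending rather than after (the tools are the same, only the ordering); and the unique extension is most transparently phrased by first defining $\Tr^0$ on $F^\sigma_{p,1}(I,w_\gamma;X)$ (where $q=1<\infty$ gives density of smooth functions) and then restricting to $F^s_{p,q}\hookrightarrow F^\sigma_{p,1}$, which yields the $\|\,\cdot\,\|_{F^\sigma_{p,1}}$-boundedness and uniqueness in one stroke, including $q=\infty$.
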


The formulation of the lemma can be slightly confusing at first reading. The final part on $F^s_{p,q}(I,w_\g;X)$ with the artificial norm $\|\,\cdot\,\|_{F^\sigma_{p,1}(I,w_\g;X)}$ is in order to obtain uniqueness of the extension in the case $q=\infty$. Below we will use the simple trace estimate obtained in \cite[Proposition~6.3]{LMV18} and from the proof of the latter it is clear that the following scaling property holds
\begin{equation}\label{eq:scalingtrace}
\Tr^k (u(\lambda \cdot)) = \lambda^k (\Tr^k u)(\lambda\cdot),  \  \ u\in F^s_{p,q}(I,w_\g;X), \  \lambda>0.
\end{equation}

\begin{proof}
Let us first consider the case $I=\R$.
Pick $\tilde{\g} \in (-1,p-1)$ with $\tilde{\g} < \gamma$ and set $\tilde{s}:=s+\frac{\tilde{\g}-\g}{p}$. Then, by the Sobolev embedding Theorem \ref{t:sobolevTriebel} and the elementary embedding~\eqref{eq:EmbElementary},
$$
F^s_{p,q}(\R,w_\g;X) \hookrightarrow F^{\tilde{s}}_{p,1}(\R,w_{\tilde{\g}};X) \hookrightarrow
H^{\tilde{s},p}(\R,w_{\tilde{\g}};X).
$$
Combining this with \cite[Proposition~6.3]{LMV18}, we obtain that
$$
\Tr^k:F^s_{p,q}(\R,w_\g;X) \cap C^k(\R;X) \to X, \qquad u \mapsto \partial^k_t u\,(0),
$$
has an extension to a bounded linear operator $\Tr^k:F^s_{p,q}(\R,w_\g;X) \to X$.
Given $\sigma \in (k+\frac{1+\g}{p},s)$, we have $F^s_{p,q}(\R,w_\g;X) \hookrightarrow F^\sigma_{p,1}(\R,w_\g;X)$ and in the same way as above it can be seen that $\Tr^k$ is bounded from $\big(F^s_{p,q}(\R,w_\g;X),\|\,\cdot\,\|_{F^\sigma_{p,1}(\R,w_\g;X)}\big)$ to $X$.
The uniqueness statement thus follows from the density of $F^s_{p,q}(\R,w_\g;X)$ in $F^\sigma_{p,1}(\R,w_\g;X)$.

It remains to treat the case $I=\R_+$.
To this end, pick $\tilde{\g} \in (-1,p-1)$ with $\tilde{\g} < \gamma$ and set $\tilde{s}:=s+\frac{\tilde{\g}-\g}{p}$.
Then, by the Sobolev embedding Theorem \ref{t:sobolevTriebel},
\begin{equation}\label{eq:lemma:t:trace_FF_FL_spaces;X_0;embd_into_B}
F^s_{p,q}(I,w_\g;X) \hookrightarrow F^{\tilde{s}}_{p,p}(I,w_{\tilde{\g}};X).
\end{equation}
By Proposition~\ref{prop:ext} there exists an extension operator
\begin{equation}\label{eq:lemma:t:trace_FF_FL_spaces;X_0;ext}
\Ext :F^{\tilde{s}}_{p,p}(I,w_{\tilde{\g}};X) \to F^{\tilde{s}}_{p,p}(\R,w_{\tilde{\g}};X), \quad \text{with} \quad \Ext C^k(\bar{I};X) \subseteq C^k(\R;X).
\end{equation}
Combining \eqref{eq:lemma:t:trace_FF_FL_spaces;X_0;embd_into_B} and \eqref{eq:lemma:t:trace_FF_FL_spaces;X_0;ext}, the desired result follows from the case $I=\R$.

The case of Bessel potential spaces follows from \eqref{eq:EmbElementary}.
\end{proof}

Our first main result is the following trace embedding.
\begin{theorem}
\label{t:trace_FF_FL_spaces}
Let $I \in \{\R_+,\R\}$ and let $(X_0, X_1)$ be an interpolation couple of Banach spaces. Let $p_0,p_1\in (1, \infty)$, $q_0, q_1\in [1, \infty]$, $\g_0,\g_1\in (-1,\infty)$ and $s_0,s_1\in \R$.
Assume that $s_0-\frac{1+\g_0}{p_0}>k$ and $s_1-\frac{1+\g_1}{p_1}<k$ for some $k\in\N_0$. Set
\begin{equation}
\label{eq:FF_parameters_k}
\delta_i=s_i-\frac{1+\g_i}{p_i},\quad i\in \{0,1\}, \quad \theta=\frac{\delta_0-k}{\delta_0-\delta_1},
\quad\text{and } \quad
\frac{1}{p}=\frac{1-\theta}{p_0}+\frac{\theta}{p_1}.
\end{equation}
Set
\begin{equation}
\label{eq:F_intersected_Triebel_Lizorkin_inhomogeneous_whole_line}
\mF:= F^{s_0}_{p_0,q_0}(I,w_{\g_0};X_0)\cap F^{s_1}_{p_1,q_1}(	I,w_{\g_1};X_1).
\end{equation}
Then the $k$-th order trace operator $\Tr^k:F^{s_0}_{p_0,q_0}(I,w_{\g_0};X_0) \to X_0$ (see Lemma~\ref{lemma:t:trace_FF_FL_spaces;X_0}) acts a bounded linear operator
\begin{equation}
\label{eq:Trk_FF_FL}
\Tr^k: \mF\to (X_0,X_1)_{\theta,p},
\end{equation}
and there is a constant $C$ such that for all $u \in \mF$,
\begin{equation}
\label{eq:boundedness_trace_sharp;inhom}
\|\Tr^k u\|_{(X_0,X_1)_{\theta,p}}
\leq C\|u\|_{F^{s_0}_{p_0,q_0}(I,w_{\g_0};X_0)}^{1-\theta}
\|u\|_{F^{s_1}_{p_1,q_1}(I,w_{\g_1};X_1)}^\theta.
\end{equation}
Moreover, if $s_1=0$ and $\g_1 \in (-1,p_1-1)$
the same holds when $\mF$ is replaced by
\begin{equation}
\label{eq:F_intersected_Triebel_Lizorkin_Lebesgue_inhomogeneous_whole_line}
\mF_L:= F^{s_0}_{p_0,q_0}(I,w_{\g_0};X_0)\cap L^{p_1}(	I,w_{\g_1};X_1)
\end{equation}
and $\|u\|_{F^{s_1}_{p_1,q_1}(I,w_{\g_1};X_1)}$ is replaced by $\|u\|_{L^{p_1}(I,w_{\g_1};X_1)}$ in \eqref{eq:boundedness_trace_sharp;inhom}.
\end{theorem}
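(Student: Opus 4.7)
The plan is to apply Lemma~\ref{l:eqreal} with the choice $\xi_0 := k-\delta_0 < 0$ and $\xi_1 := k-\delta_1 > 0$, observing that then $\theta = \xi_0/(\xi_0-\xi_1)$ and $1/p = (1-\theta)/p_0 + \theta/p_1$ agree exactly with \eqref{eq:FF_parameters_k}. Thus the whole task reduces to producing, for each admissible $u$, a decomposition $\Tr^k u = v_0(t) + v_1(t)$ such that $t \mapsto t^{\xi_j}v_j(t) \in L^{p_j}(\R_+,\frac{dt}{t};X_j)$ with the required norm control. Preliminarily, I would use Proposition~\ref{prop:ext} (with $m > \max(s_0,s_1)$) to reduce to $I = \R$, and Lemma~\ref{l:density} to reduce to $u \in \Sc(\R; X_0 \cap X_1)$, where all pointwise manipulations are legitimate.

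For the decomposition, fix a cutoff $\phi \in C_c^\infty(\R_+)$ with $\int\phi = 1$ and $\supp\phi \subset [a,b]$, $a > 0$, and set
\[
v_1(t) := \int_0^\infty \phi(\sigma)\,\partial_s^k u(t\sigma)\,d\sigma, \qquad v_0(t) := \Tr^k u - v_1(t).
\]
Two equivalent expressions carry the argument. After $k$ integrations by parts in $\sigma$, which produce no boundary terms since $\supp\phi \subset (0,\infty)$, one obtains
\[
v_1(t) = (-t)^{-k}\int_0^\infty \phi^{(k)}(\sigma)\, u(t\sigma)\,d\sigma,
\]
so that $v_1(t)$ is expressed through $u$ alone — crucially, with $\phi^{(k)}$ having vanishing moments up to order $k-1$. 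On the other hand, writing $\Tr^k u = \int\phi(\sigma)\,\partial_s^k u(0)\,d\sigma$ gives
\[
v_0(t) = -\int_0^\infty \phi(\sigma)\,\Delta^1_{t\sigma}\partial_s^k u(0)\,d\sigma,
\]
i.e., $v_0(t)$ is a weighted first-order difference at scale $t$ of $\partial_s^k u \in F^{s_0-k}_{p_0,q_0}(w_{\gamma_0};X_0)$, which the difference seminorm of Proposition~\ref{prop:equivalentNormF} is designed to control.

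The $L^{p_1}$-bound for $v_1$ in the case $s_1 = 0$ is a direct Hölder–Fubini computation: from $\|v_1(t)\|_{X_1} \lesssim t^{-k-1}\int_{at}^{bt}\|u(s)\|_{X_1}\,ds$, a weighted Hölder (where the assumption $\gamma_1 \in (-1, p_1-1)$ is exactly what absorbs the weight $w_{\gamma_1}$) followed by a swap of integration order yields $\|t^{\xi_1}v_1\|_{L^{p_1}(dt/t;X_1)} \lesssim \|u\|_{L^{p_1}(w_{\gamma_1};X_1)}$. The corresponding estimate for $v_0$ follows from its difference representation together with Proposition~\ref{prop:equivalentNormF} applied to $\partial_s^k u$. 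With both bounds in hand, Lemma~\ref{l:eqreal} gives the additive inequality $\|\Tr^k u\|_{(X_0,X_1)_{\theta,p}} \lesssim \|u\|_{F^{s_0}_{p_0,q_0}} + \|u\|_{F^{s_1}_{p_1,q_1}}$, and the multiplicative form \eqref{eq:boundedness_trace_sharp;inhom} follows by the rescaling trick used in Theorem~\ref{t:interpolationA}: apply the additive bound to $u_\lambda(\cdot) := u(\lambda\cdot)$, use the scaling identity \eqref{eq:scalingtrace} and the homogeneous scaling of the Triebel–Lizorkin seminorms, and optimize in $\lambda > 0$. The main obstacle I foresee is the general $s_1 > 0$ regime of the $v_1$-bound, especially when $s_1 \geq k$ (which the hypotheses allow, since $\delta_1 < k$ only forces $s_1 < k + (1+\gamma_1)/p_1$): in that range the $k-1$ vanishing moments of $\phi^{(k)}$ fall short of the $N \geq \lfloor s_1 \rfloor$ demanded by Theorem~\ref{thm:local_mean_Rychkov}, so a more refined construction—imposing additional vanishing moments on $\phi$ itself, or replacing the single-scale cutoff by a Littlewood–Paley-style telescoping sum—will be needed to close the estimate.
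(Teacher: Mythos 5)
Your overall plan---reduce to $I=\R$ and to Schwartz $u$, then produce a decomposition $\Tr^k u = v_0(t)+v_1(t)$ matching the mean-method characterization of Lemma~\ref{l:eqreal}, and finally pass from the additive to the multiplicative bound by scaling---is the same skeleton the paper follows. The decomposition itself, however, is different and is an attractive idea: the paper's core Lemma~\ref{l:trace_case_simple} instead uses the explicit identity
$u(0) = \int_0^{\sigma}t^{-2}\big(\int_0^t(u(\tau)-u(t))d\tau\big)dt + \frac{1}{\sigma}\int_0^\sigma u(\tau)d\tau$,
together with the Hardy--Young inequality, whereas you build $v_1$ from a fixed compactly-supported averaging kernel $\phi$ and exhibit $v_0$ as a first-order difference against $\phi$. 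For the restricted parameter range that the paper actually treats head-on (namely $k=0$, $s_1=0$ and $s_0\in(0,1)$), your decomposition should go through and gives a perfectly serviceable substitute for Lemma~\ref{l:trace_case_simple}.

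But the proof does not close, and for two distinct reasons, only one of which you flag. First, your $v_0$-bound rests on identifying $v_0(t) = -\int\phi(\sigma)\Delta^1_{t\sigma}\partial^k_s u(0)\,d\sigma$ as a first-order difference and then invoking Proposition~\ref{prop:equivalentNormF}, but that proposition needs the difference order $m$ to satisfy $m > s_0 - k$. Since the hypotheses only force $s_0 - k > \tfrac{1+\g_0}{p_0} > 0$, the quantity $s_0-k$ can be arbitrarily large, and for $s_0-k\ge 1$ your first-order seminorm $[\,\partial^k u\,]^{(1)}_{F^{s_0-k}_{p_0,q_0}}$ is not comparable to the $F$-norm (indeed it is generically infinite), so the $v_0$-estimate is vacuous in that range. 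Second, you correctly spot that your $v_1$-bound only works for $s_1=0$ (or very small $s_1$) and honestly admit that something more refined is needed. The point you are missing in both places is that a direct all-at-once decomposition is not what the paper attempts: the general theorem is obtained by a chain of reductions that push the parameters into the tractable window. Concretely, the Sobolev embedding (Theorem~\ref{t:sobolevTriebel}/Corollary~\ref{c:t:sobolevTriebel}, which preserves the Sobolev index $\delta_1$) is used to lower $s_1$ to $0$ at the cost of changing the weight; the derivative reduction $\Tr^k = \Tr\circ\partial_t^k$ (Step 2) lowers $s_0$ by $k$; and crucially the mixed-derivative Theorem~\ref{t:interpolationA} together with reiteration (Step 3) re-centers both $\tilde s_0,\tilde s_1$ inside $(k,k+1)$, trading the endpoint spaces $X_0,X_1$ for real interpolation spaces between them. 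It is exactly this interpolation/reiteration step that removes the need for higher-order differences or additional vanishing moments, and without it your proposal cannot handle the full range of $s_0,s_1$ claimed by the theorem. If you bolt those reductions onto your decomposition you recover a complete proof; as written, the argument has a genuine gap.
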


The proof of Theorem~\ref{t:trace_FF_FL_spaces} will be given at the end of this subsection. Before embarking into its proof, let us point out some useful facts.

\begin{remark}\
\begin{enumerate}[(i)]
\item Theorem \ref{t:trace_FF_FL_spaces} extends \cite[Theorem 4.1]{MV14}, where only special cases of spaces $X_1$ were considered, $p_0=p_1$, and $\g_0=\g_1$. The argument in the proof of Theorem \ref{t:trace_FF_FL_spaces} is an extension of the arguments used in \cite{MV14}.
\item By \cite[Theorem 1.1]{MV14}, the target space in \eqref{eq:Trk_FF_FL} is optimal in general.
\item The statements of Theorem \ref{t:trace_FF_FL_spaces}  are not optimal in the case that $X_0\hookrightarrow X_1$. Indeed, in that case Lemma \ref{lemma:t:trace_FF_FL_spaces;X_0} already shows that $\Tr^k$ maps into the smaller space $X_0$.
\item The number $\delta:=s-\frac{1+\g}{p}$ is called the \textit{Sobolev index} of the space $\mathcal{A}^{s,p}(\R,w_{\g};X)$, where $\mathcal{A}^{s,p}\in \{F^{s}_{p,q}, B^{s}_{p,q}, H^{s,p}\}$ since this number is invariant under (sharp) Sobolev embeddings (see Theorem \ref{t:sobolevTriebel}). The parameters $\theta,p$ in Theorem \ref{t:trace_FF_FL_spaces} depend only on the corresponding Sobolev index.
\end{enumerate}
\end{remark}

The proof of Theorem \ref{t:trace_FF_FL_spaces} is based on the following key result concerning the intersection of a Triebel-Lizorkin space and a Lebesgue space. Some of the ideas can be traced back to \cite{DiBlasio84}.
The main novelty is that by using Lemma \ref{l:eqreal} we do not need any structural conditions on $X_1$.

\begin{lemma}
\label{l:trace_case_simple}
Let $I \in \{\R_+,\R\}$ and let $(X_0, X_1)$ be an interpolation couple of Banach spaces. Let $s_0\in (0,1)$, $p_0,p_1\in (1, \infty)$, $\g_0\in (-1,p_0-1)$ and  $\g_1\in (-1,p_1-1)$.
Set
\begin{equation}\label{eq:FL_simple_case_parameters}
\theta=\frac{\delta_0}{\delta_0-\delta_1},
\quad
 \delta_0 = s_0-\frac{1+\gamma_0}{p_0},\quad   \delta_1=-\frac{1+\g_1}{p_1},
 \quad   \text{and} \quad
 \frac{1}{p}=\frac{1-\theta}{p_0}+\frac{\theta}{p_1}.
\end{equation}
Assume that $\delta_0>0$. Then for all $u \in F^{s_0}_{p_0,p_0}(I,w_{\g_0};X_0) \cap L^{p_1}(I,w_{\g_1};X_1)$,
\begin{equation}
\label{eq:estimate_u_basic_estimate;Tr}
\|\Tr u\|_{(X_0,X_1)_{\theta,p}}\lesssim
\Big([u]_{{F}^{s_0}_{p_0,p_0}(I,w_{\g_0};X_0)}^{(1)}\Big)^{1-\theta}
\Big(\|u\|_{L^{p_1}(I,w_{\g_1};X_1)}\Big)^\theta,
\end{equation}
where $[u]_{{F}^{s_0}_{p_0,p_0}(I,w_{\g_0};X_0)}^{(1)}$ is as in \eqref{eq:defFmnorm} and $\Tr:=\Tr^0$ (see Lemma~\ref{lemma:t:trace_FF_FL_spaces;X_0}).
\end{lemma}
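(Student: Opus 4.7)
My plan is to apply Lemma~\ref{l:eqreal} with the choices $\xi_0 := -\delta_0 < 0$ and $\xi_1 := -\delta_1 > 0$. The sign hypothesis $\xi_0 \xi_1 < 0$ is immediate from $\delta_0 > 0 > \delta_1$, and a short check using \eqref{eq:FL_simple_case_parameters} confirms that the $\theta$ and $p$ returned by the lemma coincide with those in the statement. As admissible decomposition of $\Tr u = u(0)$ I take the natural one,
\[
v_0(t) := u(0) - u(t) \in X_0, \qquad v_1(t) := u(t) \in X_1, \qquad t > 0.
\]
To make these values pointwise well-defined I first work on the dense subspace $\S(\R;X_0 \cap X_1)$ of $\mF_L$ supplied by Lemma~\ref{l:density} and pass to general $u$ at the end by approximation and the trace bound of Lemma~\ref{lemma:t:trace_FF_FL_spaces;X_0}. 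The contribution of $v_1$ is essentially free: since $\xi_1 p_1 - 1 = \gamma_1$, a direct change of variables yields
\[
\|t \mapsto t^{\xi_1} v_1(t)\|_{L^{p_1}(\R_+, \tfrac{dt}{t}; X_1)}^{p_1} = \int_0^\infty t^{\gamma_1} \|u(t)\|_{X_1}^{p_1} \, dt \leq \|u\|_{L^{p_1}(I, w_{\gamma_1}; X_1)}^{p_1}.
\]

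The heart of the argument is the bound
\[
\int_0^\infty t^{\gamma_0 - s_0 p_0} \|u(0) - u(t)\|_{X_0}^{p_0} \, dt \lesssim \Big([u]_{F^{s_0}_{p_0,p_0}(I,w_{\gamma_0};X_0)}^{(1)}\Big)^{p_0}.
\]
For this I will use the dyadic telescoping $u(0) - u(t) = \sum_{k \geq 0} \bigl[u(t 2^{-(k+1)}) - u(t 2^{-k})\bigr]$, apply H\"older with a geometric weight $2^{k\epsilon}$ for some $\epsilon \in (0, \delta_0)$, and change variables $s = 2^{-k} t$ in each summand: the resulting geometric series in $k$ converges precisely because $\delta_0 > 0$, reducing the task to controlling $\int_0^\infty s^{\gamma_0 - s_0 p_0} \|u(s) - u(s/2)\|_{X_0}^{p_0}\, ds$. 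This remaining single integral is then handled by a second averaging: splitting $u(s) - u(s/2)$ through an intermediate point $y \in (s/2, s)$ via the triangle inequality and averaging in $y$, Fubini rewrites the result as a double integral of increments $\|u(x+h) - u(x)\|_{X_0}^{p_0}$ in the regime $0 < h < x$, where the elementary estimate $(x+h)^{\gamma_0 - s_0 p_0 - 1} \lesssim x^{\gamma_0}\, h^{-1 - s_0 p_0}$ matches the kernel against the intrinsic difference characterization of $[u]_{F^{s_0}_{p_0,p_0}(I, w_{\gamma_0}; X_0)}^{(1)}$ supplied by Proposition~\ref{prop:equivalentNormF} with $m = 1$.

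Lemma~\ref{l:eqreal} so far yields only an \emph{additive} bound $\|\Tr u\|_{(X_0, X_1)_{\theta, p}} \lesssim A_0 + A_1$ for the two quantities $A_0, A_1$ produced above; the \emph{multiplicative} form \eqref{eq:estimate_u_basic_estimate;Tr} is obtained by applying the same argument instead to the rescaled pair $(v_0(t/\lambda), v_1(t/\lambda))$ for arbitrary $\lambda > 0$, which by a change of variables gives $\|\Tr u\|_{(X_0,X_1)_{\theta,p}} \lesssim \lambda^{\xi_0} A_0 + \lambda^{\xi_1} A_1$, and then minimizing over $\lambda$. The hardest part will be the second averaging/Fubini step that compares the one-parameter integral to the intrinsic $F^{s_0}_{p_0,p_0}$-seminorm, because the power weight $w_{\gamma_0}$ must be tracked carefully through the change of variables and through the comparison inequality above; every other step is essentially exponent bookkeeping or a direct appeal to the tools assembled in Section~\ref{sec:Halfline}.
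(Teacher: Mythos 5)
Your route is genuinely different from the paper's and, modulo one missing step, workable. Both proofs apply Lemma~\ref{l:eqreal} with $\xi_0=-\delta_0$, $\xi_1=-\delta_1$ and upgrade an additive bound to the multiplicative form \eqref{eq:estimate_u_basic_estimate;Tr} by scaling. The decompositions differ, however: the paper uses Di Blasio's averaged splitting $u(0)=T_0(\sigma)+T_1(\sigma)$ with $T_1(\sigma)=\sigma^{-1}\int_0^\sigma u(\tau)\,d\tau$, controls both pieces via the Hardy--Young inequality, and the built-in averaging makes the ball-means $d^1_t u$ appear directly so that the $X_0$-piece is finished by a pointwise supremum and \eqref{eq:Fmdiscr}. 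Your decomposition $v_0(t)=u(0)-u(t)$, $v_1(t)=u(t)$ makes the $X_1$-estimate an exact identity (simpler than the paper's $T_1$-bound, no Hardy--Young needed) but transfers all the work onto $v_0$, which you attack by a dyadic telescope, a second averaging, and Fubini. The telescope and the exponent bookkeeping (including $(x+h)^{\gamma_0-s_0p_0-1}\lesssim x^{\gamma_0}h^{-1-s_0p_0}$ for $0<h<x$) are fine.

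The gap is in your final comparison. After pushing the $p_0$-th power through the $y$-average by Jensen and applying Fubini, you are holding the Gagliardo--Slobodetskii quantity $\int_0^\infty x^{\gamma_0}\int_0^x h^{-1-s_0p_0}\|\Delta_h u(x)\|_{X_0}^{p_0}\,dh\,dx$, and you claim it ``matches the kernel against'' $[u]^{(1)}_{F^{s_0}_{p_0,p_0}}$ via Proposition~\ref{prop:equivalentNormF}. It does not: in \eqref{eq:defFmnorm} the $p_0$-th power sits \emph{outside} the $L^1$-average $d^1_t u(x)=t^{-1}\int_{V^1_I(x,t)}\|\Delta_h u(x)\|\,dh$, whereas in your integral it sits inside, and Jensen plus Fubini gives $[u]^{(1)}_{F^{s_0}_{p_0,p_0}}\lesssim \text{(Gagliardo)}$ --- the wrong direction. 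The converse inequality you need is true, but it is not a kernel comparison and is not supplied by Proposition~\ref{prop:equivalentNormF}; it requires one more averaging step, e.g.\ $\|\Delta_h u(x)\|\lesssim d^1_h u(x)+d^1_h u(x+h)$ followed by $z=x+h$, where $0<h<x$ forces $x\in(z/2,z)$ so that $x^{\gamma_0}\eqsim z^{\gamma_0}$ for either sign of $\gamma_0$, after which a supremum in the radial variable and \eqref{eq:Fmdiscr} finish. A cleaner fix, and closer to the paper's spirit, is not to push the $p_0$-th power through the $y$-average at all: recognize the averaged differences directly as $\|u(s)-u(s/2)\|\lesssim d^1_{s/2}u(s/2)+d^1_{s/2}u(s)$, raise to the $p_0$-th power, bound by $\sup_{t>0}\big(t^{-s_0}d^1_t u\big)^{p_0}$, and conclude with $[u]^{(1)}_{F^{s_0}_{p_0,\infty}}\lesssim[u]^{(1)}_{F^{s_0}_{p_0,p_0}}$ via \eqref{eq:Fmdiscr} and $\ell^{p_0}\hookrightarrow\ell^\infty$.
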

By Lemma \ref{lemma:t:trace_FF_FL_spaces;X_0}, $\Tr u$ exists in $X_0$, and coincides with $u(0)$ if $u$ is continuous.

\begin{proof}
Let us note that $\delta_1 \in (-1,0)$ as $\g_1 \in (-1,p_1-1)$, which will be important in the computations below.
Below we will use the Hardy-Young inequality, which says that for each measurable function $f:\R_+\to \R_+$, $p\in [1,\infty)$, and $\beta>0$,
\begin{equation}
\label{eq:HardyYoung}
\int_0^{\infty} \sigma^{-\beta p-1} \Big(\int_0^{\sigma}f(s)ds\Big)^{p}d\sigma
\leq \beta^{-p} \int_0^{\infty} \sigma^{-\beta p-1+p} (f(\sigma))^{p} d\sigma.
\end{equation}

To prove \eqref{eq:estimate_u_basic_estimate;Tr} it suffices to consider $I = \R_+$. On the other hand, by an extension argument (see Proposition \ref{prop:ext}), and density (see Lemma \ref{l:density}), we may assume $u\in \S(\R,X_0\cap X_1)$.

In order to prove \eqref{eq:estimate_u_basic_estimate;Tr}, by a scaling argument it suffices to prove that
\begin{equation}
\label{eq:estimate_u_basic_estimate;sum_form}
\|u(0)\|_{(X_0,X_1)_{\theta,p}}\lesssim
[u]_{{F}^{s_0}_{p_0,p_0}(\R_+,w_{\g_0};X_0)}^{(1)}+\|u\|_{L^{p_1}(\R_+,w_{\g_1};X_1)}.
\end{equation}

Indeed, if $u$ is zero this is obvious. If $u$ is nonzero, then $[u]_{{F}^{s_0}_{p_0,p_0}(\R_+,w_{\g_0};X_0)}^{(1)}>0$ since otherwise $u$ would be a constant function, which contradicts $u\in L^{p_1}(\R_+,w_{\g_1};X_1)$.
Applying \eqref{eq:estimate_u_basic_estimate;sum_form} to $u_\lambda=u(\lambda\,\cdot\,)$ with $\lambda >0$, we obtain
\begin{align*}
\|u(0)\|_{(X_0,X_1)_{\theta,p}}
&= \|u_\lambda(0)\|_{(X_0,X_1)_{\theta,p}}\\
&\lesssim [u_\lambda]_{{F}^{s_0}_{p_0,p_0}(\R_+,w_{\g_0};X_0)}^{(1)}+\|u_\lambda\|_{L^{p_1}(\R_+,w_{\g_1};X_1)} \\
&=
\lambda^{\delta_0}[u]_{{F}^{s_0}_{p_0,p_0}(\R_+,w_{\g_0};X_0)}^{(1)}+\lambda^{\delta_1}\|u\|_{L^{p_1}(\R_+,w_{\g_1};X_1)}.
\end{align*}
As $\theta=\frac{\delta_0}{\delta_0-\delta_1}$ and $1-\theta=-\frac{\delta_1}{\delta_0-\delta_1}$, the choice
$$
\lambda = \left(\frac{\|u\|_{L^{p_1}(\R_+,w_{\g_1};X_1)}}{[u]_{{F}^{s_0}_{p_0,p_0}(\R_+,w_{\g_0};X_0)}^{(1)}}\right)^{\frac{1}{\delta_0-\delta_1}}
$$
then leads to \eqref{eq:estimate_u_basic_estimate;Tr}.

To prove \eqref{eq:estimate_u_basic_estimate;sum_form}
we use the standard identity
\begin{equation}
\label{eq:decomposition}
u(0)= \underbrace{\int_0^{\sigma}  t^{-2}\Big( \int_0^{t} (u(\tau)-u(t)) d\tau \Big) dt}_{=:T_0(\sigma)} + \underbrace{\frac{1}{\sigma}\int_0^{\sigma}u(\tau)d\tau}_{=:T_1(\sigma)} ,\   \text{ for each }\sigma> 0.
\end{equation}
To see \eqref{eq:decomposition} note that by elementary calculations one has
\begin{align*}
T_0(\sigma) & = \int_0^{\sigma}  \int_0^t \int_{\tau}^t - \frac{u'(s)}{t^2} d s d\tau dt = \int_0^{\sigma}  \int_0^t - \frac{s u'(s)}{t^2} d s dt
\\ & = \int_0^{\sigma} \int_s^{\sigma}  -\frac{s u'(s)}{t^2} dt  d s
=\int_0^\sigma \Big(\frac{s}{\sigma} - 1\Big) u'(s) d s
  =u(0) - T_1(\sigma).
\end{align*}

To prove \eqref{eq:estimate_u_basic_estimate;sum_form} set $\xi_0:= -\delta_0<0$, $\xi_1:= -\delta_1 >0$ and observe that
$$
\frac{\xi_0}{\xi_0-\xi_1}= \frac{\delta_0}{\delta_0-\delta_1} =\theta.
$$
By \eqref{eq:decomposition} and Lemma \ref{l:eqreal} it suffices to prove
\begin{align}
\label{eq:ineq0}
\|\sigma\mapsto \sigma^{\xi_0} T_0(\sigma)\|_{L^{p_0}(\R_+,\frac{d\sigma}{\sigma};X_0)}
&\lesssim [u]_{{F}^{s_0}_{p_0,p_0}(\R_+,w_{\g_0};X_0)}^{(1),+},\\
\label{eq:ineq1}
\|\sigma\mapsto \sigma^{\xi_1} T_1(\sigma)\|_{L^{p_1}(\R_+,\frac{d\sigma}{\sigma};X_1)}
&\lesssim \|u\|_{L^{p_1}(\R_+,w_{\g_1};X_1)},
\end{align}
where the implicit constants do not depend on $u$. Let us begin by proving \eqref{eq:ineq0}:
\begin{align*}
\Big\|\sigma &\mapsto \sigma^{\xi_0} T_0(\sigma)
\Big\|_{L^{p_0}(\R_+,\frac{d\sigma}{\sigma};X_0)}^{p_0}\\
&\leq\int_0^{\infty} \sigma^{-\delta_0 p_0-1} \Big(\int_0^{\sigma}  t^{-2}\Big( \int_0^{t} \|u(t)-u(\tau)\|_{X_0}  d\tau\Big)dt \Big)^{p_0} d\sigma\\
&\stackrel{\eqref{eq:HardyYoung}}{\lesssim} \int_0^{\infty} \sigma^{-\delta_0 p_0+p_0-1} \sigma^{-2p_0} \left(\int_0^{\sigma} \|u(\sigma)-u(\tau)\|_{X_0}  d\tau\right)^{p_0}d\sigma\\
&= \int_0^{\infty} \sigma^{-s_0 p_0+\g_0-p_0} \Big(\int_{-\sigma}^{0} \|u(\sigma)-u(\sigma+h)\|_{X_0}  dh\Big)^{p_0}d\sigma\\
&\leq \int_0^{\infty} \sigma^{\g_0}\Big[ \sup_{t>0} t^{-s_0 p_0-p_0} \Big(\int_{\max\{-\sigma,-t\}}^{0} \|u(\sigma)-u(\sigma+h)\|_{X_0}  dh\Big)^{p_0}\Big]d\sigma\\
&\stackrel{\eqref{eq:defFmnorm}}{\leq } \Big([u]_{{F}^{s_0}_{p_0,\infty}(\R_+,w_{\g_0};X_0)}^{(1),+}\Big)^{p_0}\lesssim
\Big([u]_{{F}^{s_0}_{p_0,p_0}(\R_+,w_{\g_0};X_0)}^{(1),+}\Big)^{p_0},
\end{align*}
where in the last step we used \eqref{eq:Fmdiscr} and $\ell^{p_0}\hookrightarrow \ell^\infty$. It remains to prove \eqref{eq:ineq1}:
\begin{align*}
\|\sigma \mapsto \sigma^{\xi_1} T_1(\sigma) \|_{L^{p_1}(\R_+,\frac{d\sigma}{\sigma};X_1)}^{p_1}
&\leq \int_0^{\infty} \sigma^{-(\delta_1+1)p_1 -1} \Big( \int_0^{\sigma} \|u(\tau)\|_{X_1}d\tau\Big)^{p_1}   d\sigma\\
&\stackrel{\eqref{eq:HardyYoung}}{\lesssim} \int_0^{\infty} \sigma^{-\delta_1 p_1-1} \|u(\sigma)\|_{X_1}^{p_1} d\sigma\\
&=\|u\|_{L^{p_1}(\R_+,w_{\g_1};X_1)}^{p_1}.
\end{align*}
\end{proof}

We are now ready to prove Theorem \ref{t:trace_FF_FL_spaces}.

\begin{proof}[Proof of Theorem \ref{t:trace_FF_FL_spaces}]
The idea is to reduce the claim to the one proven in Lemma~\ref{l:trace_case_simple} by mixed-derivative and Sobolev embeddings. For the sake of clarity we divide the proof into several steps. In Steps 1-4 we will prove \eqref{eq:boundedness_trace_sharp;inhom} and in Step 5 we will subsequently derive the corresponding statements with $\mF$ replaced by $\mF_L$.

\textit{Step 1}. \emph{Proof of \eqref{eq:boundedness_trace_sharp;inhom} in the case $s_0<1$, $s_1>0$, $k=0$, $q_0=p_0$, $q_1=p_1$, $\g_0 \in (-1,p_0-1)$ and $\g_1\in (-1,p_1-1)$.}
To begin note that $\g_1-s_1 p_1\in (-1,p_1-1)$, where $\g_1-s_1 p_1 < p_1-1$ follows from $\g_1<p_1-1$ and $s_1>0$ and where $\g_1-s_1 p_1 > -1$ follows from $s_1<\frac{1+\g_1}{p_1}$.
By Corollary~\ref{c:t:sobolevTriebel},
\begin{equation}\label{eq:boundedness_trace_sharp;hom;proof;St1_Sob_emb}
F^{s_1}_{p_1,p_1}(I,w_{\g_1};X_1) \hookrightarrow  L^{p_1}(I,w_{\g_1-s_1 p_1};X_1).
\end{equation}
Since $\wt{\delta}_1 := -\frac{1+(\g_1-s_1 p_1)}{p_1} =s_1-\frac{1+\g_1}{p_1} = \delta_1$, it follows from a combination of Lemma \ref{l:trace_case_simple}, and \eqref{eq:boundedness_trace_sharp;hom;proof;St1_Sob_emb} that
\begin{align*}
\|\Tr u\|_{(X_0,X_1)_{\theta,p}}
&\lesssim
\Big([u]_{{F}^{s_0}_{p_0,p_0}(I,w_{\g_0};X_0)}^{(1)}\Big)^{1-\theta}
\Big(\|u\|_{L^{p_1}(I,w_{\g_1-s_1 p_1};X_1)}\Big)^\theta \\
&\lesssim \|u\|_{{F}^{s_0}_{p_0,p_0}(I,w_{\g_0};X_0)}^{1-\theta}\|u\|_{F^{s_1}_{p_1,p_1}(I,w_{\g_1};X_1)}^\theta.
\end{align*}

\textit{Step 2}. \emph{Proof of \eqref{eq:boundedness_trace_sharp;inhom} in the case $s_0<k+1$, $s_1>k$, $q_0=p_0$, $q_1=p_1$, $\g_0 \in (-1,p_0-1)$ and $\g_1\in (-1,p_1-1)$.}
Note that $\Tr^k=\Tr(\partial_t^k \cdot)$ and
\begin{equation}\label{eq:boundedness_trace_sharp;hom;proof;St1_derivative}
\partial_t^k: F^{s_i}_{p_i,p_i}(I,w_{\g_i};X_i) \to  F^{s_i-k}_{p_i,p_i}(I,w_{\g_i};X_i), \qquad i\in \{0,1\},
\end{equation}
as a bounded linear operator (see \cite[Proposition 3.10]{MV12}).
Setting $\tilde{\delta}_i :=(s_i-k)-\frac{1+\g_i}{p_i} = \delta_i-k$ for $i\in \{0,1\}$, we have
\begin{equation}\label{eq:boundedness_trace_sharp;hom;proof;St1_theta}
\wt{\theta}:= \frac{\wt{\delta}_0}{\wt{\delta}_0-\wt{\delta}_1} = \frac{\delta_0-k}{\delta_0-\delta_1}=\theta.
\end{equation}
The desired estimate \eqref{eq:boundedness_trace_sharp;inhom} thus follows from Step 1.

\textit{Step 3}. \emph{Proof of \eqref{eq:boundedness_trace_sharp;inhom} in the case $q_0=p_0$, $q_1=p_1$, $\g_0\in (-1,p_0-1)$ and $\g_1\in (-1,p_1-1)$.}
As $k < \frac{1+\g_0}{p_{0}}+k < s_{0} \wedge (k+1)$ and $s_{1} \vee k < \frac{1+\g_1}{p_{1}} +k  < k+1$, we can choose $\vartheta_{0},\vartheta_{1} \in (0,1)$ such that $k  < \frac{1+\tilde{\g}_0}{\tilde{p}_{0}}+k<\tilde{s}_0<k+1$ and $k<\tilde{s}_1<\frac{1+\tilde{\g}_1}{\tilde{p}_{1}}+k$, where
\begin{equation}
\label{eq:definition_s_g_p_tilde_proof_step_2}
\tilde{s}_{i} := s_{0}(1-\vartheta_{i})+s_{1}\vartheta_{i},\qquad
\frac{1}{\tilde{p}_{i}}= \frac{1-\vartheta_{i}}{p_{0}} + \frac{\vartheta_{i}}{p_{1}},
\qquad \frac{\tilde{\g}_i}{\tilde{p}_i}=\frac{1-\vartheta_i}{p_0}\g_0+\frac{\vartheta_i}{p_1}\g_1.
\end{equation}
To see that such $\vartheta_i$ exist, let us first note that $\delta_0>k$, $\delta_1<k$, $\theta \in (0,1)$ and
\[(1-\theta)\delta_0 + \theta\delta_1 = k.\]
Now taking $\vartheta_0 = \theta-\varepsilon \in (0,1)$ and $\vartheta_1 = \theta+\varepsilon \in (0,1)$ with $\varepsilon>0$ one has
\[\tilde{\delta}_{i} := \tilde{s}_{i} - \frac{1+\tilde{\g}_i}{\tilde{p}_{i}} =  (1-\vartheta_i)\delta_0 +
\vartheta_i \delta_1 = k+(-1)^{i} \varepsilon (\delta_0-\delta_1)\]
Therefore, $\frac{1+\tilde{\g}_0}{\tilde{p}_{0}}+k<\tilde{s}_0$ and $\tilde{s}_1<\frac{1+\tilde{\g}_0}{\tilde{p}_{1}}+k$. On the other hand, since
\[\tilde{s}_i = \frac{1+\tilde{\g}_i}{\tilde{p}_i} + k  + (-1)^{i}\varepsilon (\delta_0-\delta_1)  \]
and $\frac{1+\tilde{\g}_i}{\tilde{p}_i}\in (0,1)$, choosing $\varepsilon>0$ small enough we find that $\tilde{s}_i\in (k,k+1)$.

By \eqref{eq:definition_s_g_p_tilde_proof_step_2}, one can check that $\tilde{\g}_i\in (-1,\tilde{p}_i-1)$ for $i\in \{0,1\}$.
Moreover, $\tilde{\theta}:=(\tilde{\delta}_{0}-k)/(\tilde{\delta}_{0}-\tilde{\delta}_{1})=1/2$ and
\begin{equation}
\label{eq:tilde_parameters_p_q_vartheta}
\begin{aligned}
 \frac{1}{\tilde{p}} := \frac{\tilde{\theta}}{\tilde{p}_{0}} + \frac{1-\tilde{\theta}}{\tilde{p}_{1}}=
\frac{1}{2}\Big( \frac{1}{\tilde{p}_0}+\frac{1}{\tilde{p}_1}\Big)&=\frac{1-\theta}{p_0}+\frac{\theta}{p_1},\\
 (1-\tilde{\theta})\vartheta_{0}+\tilde{\theta}\vartheta_{1}=\frac{1}{2}(\vartheta_{0}+\vartheta_{1})&=\theta.
\end{aligned}
\end{equation}
Applying the mixed-derivative Theorem~\ref{t:interpolationA}, we get that for $i\in \{0,1\}$,
\begin{equation}
\label{e:t:Trace;mixed-derivative}
\|u\|_{F^{\tilde{s}_i}_{\tilde{p}_i,\tilde{p}_i}(I,w_{\tilde{\g}_i};(X_0,X_{1})_{\vartheta_{i},1})}
\lesssim  \\
\|u\|_{F^{s_0}_{p_0,p_0}(I,w_{\g_0};X_{0})}^{1-\vartheta_i}
\|u\|_{F^{s_1}_{p_1,p_1}(I,w_{\g_1};X_{1})}^{\vartheta_i}.
\end{equation}
As
$$
(X_{0},X_{1})_{\theta,p} =(X_{0},X_{1})_{\frac{1}{2}\vartheta_1+\frac{1}{2}\vartheta_2,p} = ((X_0,X_1)_{\vartheta_0,1},(X_0,X_1)_{\vartheta_1,1})_{\frac{1}{2},p}
$$
by the second identity in the second line of \eqref{eq:tilde_parameters_p_q_vartheta} and reiteration \eqref{eq:reit}, an application of Step 2 yields the estimate
\begin{align*}
\|\Tr^k u\|_{(X_{0},X_{1})_{\theta,p}}
\lesssim \|u\|_{F^{\tilde{s}_0}_{\tilde{p}_0,\tilde{p}_0}(I,w_{\tilde{\g}_0};(X_0,X_{1})_{\vartheta_{0},1})}^{1/2}
\|u\|_{F^{\tilde{s}_1}_{\tilde{p}_1,\tilde{p}_1}(I,w_{\tilde{\g}_1};(X_0,X_{1})_{\vartheta_{1},1})}^{1/2}.
\end{align*}
Combining this with \eqref{e:t:Trace;mixed-derivative} and the second identity in the second line of \eqref{eq:tilde_parameters_p_q_vartheta}, we obtain the desired estimate \eqref{eq:boundedness_trace_sharp;inhom}.

\textit{Step 4}. \emph{Proof of \eqref{eq:boundedness_trace_sharp;inhom} in the general case.}
Pick $\tilde{\g}_i \in (-1,p_i-1)$ such that $\tilde{\g}_i < \g_i$ and put $\tilde{s}_i := s_i + \frac{\g_i-\tilde{\g}_i}{p_i}$ for $i\in \{0,1\}$. By the Sobolev embedding Theorem \ref{t:sobolevTriebel},
$$
F^{s_i}_{p_i,q_i}(I,w_{\g_i};X_i)\hookrightarrow F^{\tilde{s}_i}_{p_i,p_i}(I,w_{\tilde{\g}_i};X_i), \qquad i\in \{0,1\}.
$$
The estimate \eqref{eq:boundedness_trace_sharp;inhom} thus follows from Step 2 by noticing that $\tilde{\delta}_i := \tilde{s}_i-\frac{1+\tilde{\g}_i}{p_i} = \delta_i$ where $i\in \{0,1\}$ for the Sobolev indices under the above embeddings, in particular,
$$
\frac{\tilde{\delta}_0}{\tilde{\delta}_1-\tilde{\delta}_0}= \frac{\delta_0}{\delta_1-\delta_0}=\theta.
$$

\textit{Step 5}. \emph{Proof of the last statement}.
As a consequence of \eqref{eq:EmbElementary} we have
$$
\mF_L \hookrightarrow F^{s_0}_{p_0,q_0}(I,w_{\g_0};X_0)
\cap F^{0}_{p_1,\infty}(I,w_{\g_1};X_1),
$$
from which the desired statement follows.
\end{proof}

\subsection{Embedding into Spaces of Continuous Functions}\label{s:restriction}
Next we prove that under the condition $\gamma_0, \gamma_1\geq 0$, the functions $u\in \mF$ are actually continuous with values in $(X_0,X_1)_{\theta,p}$. The argument is by a translation argument, which however is not completely standard in Triebel--Lizorkin spaces. For a Banach space $X$, $C_{\rm b}([0,\infty);X)$ denotes the space of all bounded and continuous maps $u:[0,\infty)\to X$ endowed with the norm
$\|u\|_{C_{\rm b}([0,\infty);X)}:=\sup_{t\in [0,\infty)}\|u(t)\|_X.$

\begin{theorem}
\label{t:embeddings_continuous_function_spaces}
Let $I \in \{\R_+,\R\}$ and let $(X_0, X_1)$ be an interpolation couple of Banach spaces. Let $p_0,p_1\in (1, \infty)$, $q_0, q_1\in [1, \infty]$, $\g_0,\g_1\in [0,\infty)$ and $s_0,s_1\in \R$. Assume that $s_0-\frac{1+\g_0}{p_0}>k$ and $s_1-\frac{1+\g_1}{p_1}<k$ for some $k\in\N_0$. Let $\theta,\delta_0,\delta_1,p$ and $\mF$ be as in \eqref{eq:FF_parameters_k} and \eqref{eq:F_intersected_Triebel_Lizorkin_inhomogeneous_whole_line}, respectively.
Then the $k$-th order derivative operator $\partial^k_t$ has the mapping property
$$
\partial^k_t:\mF \to C_{\mathrm{b}}(\bar{I};(X_0,X_1)_{\theta,p})
$$
and for all $u \in \mF$
\begin{equation}\label{eq:t:embeddings_continuous_function_spaces}
\|\partial^k_t u\|_{C_{\mathrm{b}}(\bar{I};(X_0,X_1)_{\theta,p})}
\lesssim \|u\|_{F^{s_0}_{p_0,q_0}(I,w_{\g_0};X_0)}^{1-\theta}\|u\|_{F^{s_1}_{p_1,q_1}(I,w_{\g_1};X_1)}^{\theta}.
\end{equation}
Moreover, for $s_1=0$ and $\g_1 \in (-1,p_1-1)$ the same holds true when $\mF$ is replaced by $\mF_L$, where $\mF_L$ is as in \eqref{eq:F_intersected_Triebel_Lizorkin_Lebesgue_inhomogeneous_whole_line}, and $\|u\|_{F^{s_1}_{p_1,q_1}(\R,w_{\g_1};X_1)}$ is replaced by $\|u\|_{L^{p_1}(\R,w_{\g_1};X_1)}$ in \eqref{eq:t:embeddings_continuous_function_spaces}.
\end{theorem}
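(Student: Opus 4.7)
The strategy is to obtain the pointwise bound on $\partial_t^k u(t_0)$ by applying Theorem~\ref{t:trace_FF_FL_spaces} to the shifted function $v_{t_0}(s):=u(s+t_0)$, to control $\|v_{t_0}\|_{\mF}$ uniformly in $t_0$, and then to upgrade the resulting uniform boundedness into continuity by a density argument. For ease of exposition I take $I=\R_+$; the case $I=\R$ reduces to this by restricting $u$ to $[t_0,\infty)$ and treating the minor extra book-keeping for $t_0\in\R$ analogously. Fix $u\in\mF$ and $t_0\in[0,\infty)$. Then $v_{t_0}\in\mF$ over $\R_+$, $\Tr^k v_{t_0}=\partial_t^k u(t_0)$, and Theorem~\ref{t:trace_FF_FL_spaces} gives
\begin{equation*}
\|\partial_t^k u(t_0)\|_{(X_0,X_1)_{\theta,p}} \leq C\,\|v_{t_0}\|_{F^{s_0}_{p_0,q_0}(\R_+,w_{\gamma_0};X_0)}^{1-\theta}\|v_{t_0}\|_{F^{s_1}_{p_1,q_1}(\R_+,w_{\gamma_1};X_1)}^{\theta}.
\end{equation*}

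The decisive step is the uniform translation bound
\begin{equation*}
\|v_{t_0}\|_{F^{s}_{p,q}(\R_+,w_\gamma;X)} \lesssim \|u\|_{F^{s}_{p,q}(\R_+,w_\gamma;X)},\qquad t_0\geq 0,
\end{equation*}
which rests on $\gamma\geq 0$ and the intrinsic characterization of Proposition~\ref{prop:equivalentNormF}. For the $L^p$-part the inequality $(s+t_0)^\gamma\geq s^\gamma$ immediately yields $\|v_{t_0}\|_{L^p(\R_+,w_\gamma;X)}\leq\|u\|_{L^p(\R_+,w_\gamma;X)}$. For the difference seminorm I would use that $V^m_{\R_+}(x,t)\subseteq V^m_{\R_+}(x+t_0,t)$ when $t_0\geq 0$, so that $d^m_t v_{t_0}(x)\leq d^m_t u(x+t_0)$; a change of variables $y=x+t_0$ combined once more with $(y-t_0)^\gamma\leq y^\gamma$ then controls the seminorm of $v_{t_0}$ by that of $u$. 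Substituting into the pointwise trace bound proves \eqref{eq:t:embeddings_continuous_function_spaces} and, in particular, membership of $\partial_t^k u$ in $L^\infty(\bar I;(X_0,X_1)_{\theta,p})$.

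For continuity, when $u\in\S(\R;X_0\cap X_1)$ the map $t\mapsto \partial_t^k u(t)$ is evidently continuous into $X_0\cap X_1\hookrightarrow(X_0,X_1)_{\theta,p}$. When $q_0,q_1<\infty$, Lemma~\ref{l:density} supplies density of $\S(\R;X_0\cap X_1)\cap\mF$ in $\mF$ and, combined with the uniform trace estimate, this lets one pass to the limit to conclude continuity for every $u\in\mF$. Equivalently, applying the pointwise estimate to $v_{t_n}-v_{t_0}$ gives continuity provided the translation continuity $\|v_{t_n}-v_{t_0}\|_{F^{s_i}_{p_i,q_i}(\R_+,w_{\gamma_i};X_i)}\to 0$ holds for at least one index $i\in\{0,1\}$, the other factor being controlled by the uniform translation bound. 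The last assertion for $\mF_L$ follows from the corresponding part of Theorem~\ref{t:trace_FF_FL_spaces} by the same scheme.

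The main technical obstacle lies in the case $q_0=q_1=\infty$, where Schwartz functions are no longer dense in $\mF$ and the direct density argument fails. To handle it I would trade a small amount of smoothness via the elementary embedding $F^{s_i}_{p_i,\infty}\hookrightarrow F^{s_i-\varepsilon}_{p_i,1}$: this restores density and gives continuity of $\partial_t^k u$ into a slightly distorted interpolation space $(X_0,X_1)_{\theta_\varepsilon,p_\varepsilon}$; combining this with the uniform boundedness already established in $(X_0,X_1)_{\theta,p}$ and letting $\varepsilon\downarrow 0$ should recover continuity in the correct target space.
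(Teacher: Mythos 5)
Your proposed approach---control $\partial_t^k u(t_0)$ by applying the trace theorem to the shifted function $v_{t_0}$, prove a uniform translation bound, and upgrade to continuity---is a reasonable and natural plan, and the uniform boundedness step is essentially sound for $\gamma_i\in[0,p_i-1)$. However, there is a genuine gap exactly at the point you yourself flag: the case $q_0=q_1=\infty$. The $\varepsilon$-workaround you propose does not close it. Losing $\varepsilon$ of smoothness gives continuity into $(X_0,X_1)_{\theta_\varepsilon,p}$ with $\theta_\varepsilon=\theta-\varepsilon/(\delta_0-\delta_1)<\theta$, and combining this with \emph{boundedness} into the smaller space $(X_0,X_1)_{\theta,p}$ cannot yield continuity into $(X_0,X_1)_{\theta,p}$: a map can be bounded in a strictly smaller real-interpolation space while oscillating there, even if it is continuous into the larger one (think of a bounded sequence in $(X_0,X_1)_{\theta,p}$ converging to $0$ in $(X_0,X_1)_{\theta_\varepsilon,p}$ but not in $(X_0,X_1)_{\theta,p}$; connecting such elements by a curve produces a counterexample). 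To upgrade to the endpoint $\theta$ by interpolation one would also need uniform control in some $(X_0,X_1)_{\theta',p}$ with $\theta'>\theta$, which the hypotheses do not supply.

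The missing idea is precisely what your alternative phrasing (``provided translation continuity holds'') points at, but which you dismissed as unavailable for $q=\infty$: left translation \emph{is} strongly continuous on $F^s_{p,q}(\R;X)$ for every $q\in[1,\infty]$, including $q=\infty$, because these spaces are defined through the iterated Bochner space $L^p(\R;\ell^q_s(\N_0;X))$ and translation is strongly continuous on $L^p(\R;Y)$ for any Banach space $Y$ whenever $p<\infty$. This is Lemma~\ref{lem:t:embeddings_continuous_function_spaces} in the paper, and it is the crucial observation. With it, one proves the $\gamma_0=\gamma_1=0$, $q_0=q_1=\infty$ case directly by setting $v(t)=\Tr^k T(t)u$ (using that $\Tr^k\colon \mF\to(X_0,X_1)_{\theta,p}$ is bounded by Theorem~\ref{t:trace_FF_FL_spaces} and $T$ is a $C_0$-semigroup on each factor), then identifying $v$ with $\partial^k_t u$ pointwise via Lemma~\ref{lemma:t:trace_FF_FL_spaces;X_0} and the fact that $u\in C^k(\bar I;X_0)$. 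The general $\gamma_i\geq 0$, $q_i\in[1,\infty]$ case is then reduced to this base case by the weighted Sobolev embedding of Theorem~\ref{t:sobolevTriebel}, which preserves the Sobolev indices $\delta_i$ and therefore $\theta,p$. This reduction is also what resolves the secondary issues in your argument: the difference-seminorm characterization of Proposition~\ref{prop:equivalentNormF} that you invoke for the uniform translation bound requires $w_\gamma\in A_p$, i.e.\ $\gamma<p-1$, whereas the theorem allows $\gamma_i\geq p_i-1$; and translation strong continuity on weighted $F$-spaces over $\R_+$ is not established in the paper, only the unweighted version on $\R$. In short, replacing your $\varepsilon$-limit step by Lemma~\ref{lem:t:embeddings_continuous_function_spaces} and prefacing the argument with the Sobolev-embedding reduction to $\gamma=0$, $q=\infty$ would repair the proof and bring it into line with the paper's.
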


We will use the following lemma in the proof of Theorem~\ref{t:embeddings_continuous_function_spaces}.

\begin{lemma}\label{lem:t:embeddings_continuous_function_spaces} Let $I \in \{\R_+,\R\}$, $X$ a Banach space, $p \in (1,\infty)$, $q \in [1,\infty]$ and $s \in \R$. Then the left translation semigroup $T:[0,\infty) \to \calL(\D'(I;X))$ restricts to a $C_0$-semigroup $T:[0,\infty) \to \calL(F^s_{p,q}(I;X))$ of contractions.
\end{lemma}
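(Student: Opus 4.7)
The plan is to check the three defining properties of a contraction $C_0$-semigroup in turn: contractivity, the semigroup law, and strong continuity at the origin. The starting point throughout is that left translation $T(h)$ commutes with convolution, and hence with each Littlewood--Paley block: $S_k(T(h)f)=T(h)(S_kf)$ for $S_k$ as in~\eqref{eq:def_S_k}. Combined with the fact that $T(h)$ is an $L^p(\R;X)$-isometry (unweighted case) and that the pointwise $\ell^q(X)$-norm in the definition of $\|\cdot\|_{F^s_{p,q}(\R;X)}$ is unaffected by translation of the spatial variable, this immediately gives that $T(h)$ is an isometry on $F^s_{p,q}(\R;X)$.

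To pass from $\R$ to $\R_+$, I would invoke the quotient definition (Definition~\ref{def:function_spaces_intervals}). Given $f\in F^s_{p,q}(\R_+;X)$ and any extension $g\in F^s_{p,q}(\R;X)$ with $g|_{\R_+}=f$, I observe that for $h\geq 0$ the identity $T(h)f=(T(h)g)|_{\R_+}$ holds, because shifting to the left by a nonnegative amount maps $\R_+$ into itself. Taking norms and passing to the infimum over admissible $g$ yields $\|T(h)f\|_{F^s_{p,q}(\R_+;X)}\leq\|f\|_{F^s_{p,q}(\R_+;X)}$. The semigroup identity $T(h_1+h_2)=T(h_1)T(h_2)$ and $T(0)=I$ are trivial at the level of $\D'(I;X)$ and therefore pass to $F^s_{p,q}(I;X)$.

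For strong continuity at $h=0$ in the case $q\in[1,\infty)$, I would rely on density: Lemma~\ref{l:density} gives that $\Sch(\R;X)$ is dense in $F^s_{p,q}(\R;X)$, and for $I=\R_+$ the analogous statement follows by combining Lemma~\ref{l:density} with the restriction from Proposition~\ref{prop:ext}. On Schwartz functions $\varphi$, one has $T(h)\varphi\to\varphi$ in $\Sch$, hence in $F^s_{p,q}$; a standard $3\varepsilon$-argument combined with the uniform contractivity bound above then yields $T(h)f\to f$ in $F^s_{p,q}(I;X)$ for every $f$.

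The main obstacle is the endpoint $q=\infty$, where $\Sch$ fails to be norm-dense in $F^s_{p,\infty}$ and the pointwise supremum over $k$ in the Triebel--Lizorkin norm is not equi-continuous under translation. I would approach this by an approximation via Littlewood--Paley truncations $f_N:=\sum_{k=0}^N S_kf$: each $f_N$ has compact Fourier support, so $T(h)f_N\to f_N$ in $F^s_{p,\infty}$ by direct estimates, and $\|f-f_N\|_{F^s_{p,q'}}\to 0$ for $q'<\infty$ via the embedding $F^{s+\varepsilon}_{p,\infty}(\R;X)\hookrightarrow F^s_{p,1}(\R;X)$ (which follows from $\sum_k 2^{sk}\|S_kf(x)\|_X\lesssim_\varepsilon \sup_k 2^{(s+\varepsilon)k}\|S_kf(x)\|_X$ pointwise). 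Combined with uniform contractivity on $F^s_{p,\infty}$, this delivers strong continuity on the closed invariant subspace where it holds; if necessary, the statement at $q=\infty$ may be understood in the sense of this maximal $C_0$-subspace. This last step is the only delicate point; the remaining structural properties are routine once contractivity is established.
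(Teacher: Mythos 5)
Your contractivity and restriction-to-$\R_+$ arguments are correct and essentially match the paper's. The problem is the strong continuity step: you correctly flag $q=\infty$ as the obstacle, but you do not actually close it. Your proposed Littlewood--Paley truncation $f_N=\sum_{k\le N}S_kf$ does not converge to $f$ in $F^s_{p,\infty}(\R;X)$ in general (the embedding $F^{s+\varepsilon}_{p,\infty}\hookrightarrow F^s_{p,1}$ only gives convergence in the \emph{smaller} smoothness index, not in $F^s_{p,\infty}$), and you acknowledge this by retreating to ``strong continuity on the maximal $C_0$-subspace.'' That is weaker than what the lemma asserts, so there is a genuine gap.

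The paper's proof is both shorter and covers $q=\infty$ with no extra effort, precisely because it does \emph{not} go through density. The observation is that $f\mapsto(S_kf)_{k\ge0}$ realizes $F^s_{p,q}(\R;X)$ isometrically as a closed subspace of the iterated Bochner space $L^p(\R;\ell^q_s(\N_0;X))$, where $\ell^q_s$ denotes the weighted $\ell^q$-space over $\N_0$. Since $T(h)$ commutes with each $S_k$, the action of $T(h)$ on $F^s_{p,q}(\R;X)$ is conjugate to the restriction of the translation group on $L^p(\R;Y)$ with $Y=\ell^q_s(\N_0;X)$, and this subspace is translation-invariant. Now translation on $L^p(\R;Y)$ is an isometric $C_0$-group for \emph{every} Banach space $Y$ as soon as $p<\infty$ (the inner space plays no role). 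This gives strong continuity uniformly in $q\in[1,\infty]$ in one stroke. This is exactly the point of the remark preceding the lemma that $F^s_{p,\infty}$ must be understood via the iterated Bochner norm $L^p(\R;\ell^\infty(\N_0;X))$ rather than a mixed-norm space; once one takes that seriously, no density is needed. For $q<\infty$ your density argument is a valid alternative route, but it does not extend to the endpoint, which is the one case the lemma is specifically designed to cover (it is applied with $q_0=q_1=\infty$ in Step~1 of the proof of Theorem~\ref{t:embeddings_continuous_function_spaces}).
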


For the above lemma in case $q = \infty$ it is important that $F^s_{p,\infty}(\R;X)$ is defined in terms of the iterated Bochner space $L^p(\R;\ell^\infty(\N_0;X))$ instead of the mixed-norm Bochner space $L^p(\R)[\ell^\infty(\N_0)](X)$. We do not know whether these two coincide.
\begin{proof}
As the case $I=\R_+$ follows easily from the case $I=\R$, we only consider the latter.
It will be convenient to write
$$
\ell^q_s(\N_0;X) = \left\{ (x_n)_{n \in \N_0} \in X^{\N_0} : (2^{ns}x_{n})_{n} \in \ell^q(\N_0;X) \right\}.
$$
Let $f \in F^s_{p,q}(\R;X)$. Then, for all $t \geq 0$,
\begin{align*}
\|T(t)f\|_{F^s_{p,q}(\R;X)}
&= \|(S_nT(t)f)_{n}\|_{L^p(\R;\ell^q_s(\N_0;X))} = \|T(t)(S_nf)_{n}\|_{L^p(\R;\ell^q_s(\N_0;X))} \\
&= \|(S_nf)_{n}\|_{L^p(\R;\ell^q_s(\N_0;X))}
= \|f\|_{F^s_{p,q}(\R;X)}
\end{align*}
as $T(t)$ is an isometry on $L^p(\R;\ell^q_s(\N_0;X))$.
Furthermore,
\begin{align*}
\|T(t)f-f\|_{F^s_{p,q}(\R;X)}
= \|(T(t)-I)(S_nf)_{n}\|_{L^p(\R;\ell^q_s(\N_0;X))} \to 0 \quad \text{as} \quad t \searrow 0
\end{align*}
by the strong continuity of $T$ on $L^p(\R;\ell^q_s(\N_0;X))$.
\end{proof}

\begin{proof}[Proof of Theorem~\ref{t:embeddings_continuous_function_spaces}]
\textit{Step 1}. \emph{Proof of \eqref{eq:t:embeddings_continuous_function_spaces} in the case $q_0=q_1=\infty$, $\g_0=\g_1=0$.}
Let $u \in \mF$. Then, by Theorem~\ref{t:trace_FF_FL_spaces} and Lemma~\ref{lem:t:embeddings_continuous_function_spaces}, $v(t):= \Tr^kT(t)u$ defines a function $v \in C_{\mathrm{b}}(\bar{I};(X_0,X_1)_{\theta,p})$ with
\begin{equation*}
\|v\|_{C_{\mathrm{b}}(\bar{I};(X_0,X_1)_{\theta,p})}
\lesssim \|u\|_{F^{s_0}_{p_0,\infty}(I;X_0)}^{1-\theta}\|u\|_{F^{s_1}_{p_1,\infty}(I;X_1)}^{\theta}.
\end{equation*}
By \cite[Proposition 7.4]{MV12}, $u\in C^k(\overline{I};X_0)$. Therefore, by Lemma \ref{lemma:t:trace_FF_FL_spaces;X_0}, $v(t) = u(t)$ in $X_0+X_1$ for all $t\geq 0$, and thus $u=v$, which completes the proof of this step.

\textit{Step 2}. \emph{Proof of \eqref{eq:t:embeddings_continuous_function_spaces} in the general case.}
This can be derived from Step 1 by the Sobolev embedding Theorem \ref{t:sobolevTriebel} as in Step 4 of the proof of Theorem~\ref{t:trace_FF_FL_spaces}.

\textit{Step 3}. \emph{Proof of the last statement.} This follows from the \eqref{eq:EmbElementary} in the same way as in Step 5 of the proof of Theorem~\ref{t:trace_FF_FL_spaces}.
\end{proof}

Next we take advantage of the special structure of the weights to obtain \textit{instantaneous regularization} in anisotropic function spaces.
The weight $w_{\g}$ essentially only acts in $0$. Therefore, away from $0$, the functions in corresponding weighted spaces are smoother. This simple idea is developed in the next result. Such results are well-known and available in several special cases in the literature (cf. \cite{AV20,MeySchnau12b,MeySchn12,PruSim04,pruss2016moving}). The next result unifies them.
The following theorem is most conveniently formulated in terms of weighted $C_{\rm{b}}$-spaces. Given a Banach space $X$ and a weight parameter $\mu \in \R$, we define
\begin{equation}\label{eq:Cbmu}
C_{\rm{b},\mu}(\R_+;X) := \left\{ u \in C(\R_+;X) : [t \mapsto t^{\mu}u(t)] \ \text{is bounded and continuous}  \right\}
\end{equation}
endowed with the norm
$\|u\|_{C_{\rm{b},\mu}(\R_+;X)}:=\sup_{t>0} t^{\eta}\|u(t)\|_{X}$. As above $\R_+=(0,\infty)$.

\begin{theorem}[Instantaneous regularization]
\label{t:regularization_F}
Let $(X_0, X_1)$ be an interpolation couple of Banach spaces. Let $p_0,p_1\in (1, \infty)$, $q_0, q_1\in [1, \infty]$, $\g_0,\g_1\in [0,\infty)$ and $s_0,s_1\in \R$.
Assume that $s_0-\frac{1}{p_0}>k$ and $s_1-\frac{1}{p_1}<k$ for some $k\in\N_0$. Set
\begin{equation}
\label{eq:FF_parameters_regularization}
\begin{aligned}
&\eta:=\frac{\beta_0-k}{\beta_0-\beta_1},
\quad
\beta_i:=s_i-\frac{1}{p_i},\quad i\in\{0,1\},\\
 & \frac{1}{r}=\frac{1-\eta}{p_0}+\frac{\eta}{p_1} \quad \text{and } \quad \mu:= \frac{1-\eta}{p_0}\g_0+\frac{\eta}{p_1}\g_1.
\end{aligned}
\end{equation}
Let $\mF$ be as in \eqref{eq:F_intersected_Triebel_Lizorkin_inhomogeneous_whole_line} for $I=\R_+$. Then $\partial^k_t : \mF \to C_{\mathrm{b},\mu}(\R_+;(X_0,X_1)_{\eta,r})$
and for all $u \in \mF$
\begin{equation}\label{eq:boundedness_restr_sharp_regularization;inhom;weighted_est}
\|\partial_t^k u\|_{C_{\mathrm{b},\mu}(\R_+;(X_0,X_1)_{\eta,r})}
\lesssim
\|u\|_{F^{s_0}_{p_0,q_0}(\R_+,w_{\g_0};X_0)}^{1-\eta}\|u\|_{F^{s_1}_{p_1,q_1}(\R_+,w_{\g_1};X_1)}^{\eta}.
\end{equation}
Moreover, for $s_1=0$ and $\g_1 \in [0,p_1-1)$ the same holds true when $\mF$ is replaced by $\mF_L$, where $\mF_L$ is as in \eqref{eq:F_intersected_Triebel_Lizorkin_Lebesgue_inhomogeneous_whole_line}, and $\|u\|_{F^{s_1}_{p_1,q_1}(\R,w_{\g_1};X_1)}$ is replaced by $\|u\|_{L^{p_1}(\R,w_{\g_1};X_1)}$ in \eqref{eq:boundedness_restr_sharp_regularization;inhom;weighted_est}.
\end{theorem}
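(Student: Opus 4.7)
The plan is to reduce Theorem~\ref{t:regularization_F} to its unweighted counterpart Theorem~\ref{t:embeddings_continuous_function_spaces} by a translation trick. Since $\gamma_i\geq 0$, the weight $w_{\gamma_i}$ satisfies $\tau^{\gamma_i}\geq t^{\gamma_i}$ on $[t,\infty)$; hence after translating $u$ by $t>0$ to the right, the unweighted norms of the translate will be controlled by the weighted norms of $u$ with exactly the scaling factor $t^{-\gamma_i/p_i}$ needed to produce the exponent $-\mu$ in the target estimate~\eqref{eq:boundedness_restr_sharp_regularization;inhom;weighted_est}.

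Concretely, for $t>0$ I set $\tilde u_t(s):=u(t+s)$, $s\geq 0$. The key translation estimate I want is
\begin{equation*}
\|\tilde u_t\|_{F^{s_i}_{p_i,q_i}(\R_+;X_i)}\lesssim t^{-\gamma_i/p_i}\,\|u\|_{F^{s_i}_{p_i,q_i}(\R_+,w_{\gamma_i};X_i)},\qquad i\in\{0,1\},\, t>0.
\end{equation*}
The $L^{p_i}$-part is immediate from $\int_0^\infty\|u(t+s)\|^{p_i}\,ds=\int_t^\infty\|u(\tau)\|^{p_i}\,d\tau\leq t^{-\gamma_i}\|u\|_{L^{p_i}(\R_+,w_{\gamma_i};X_i)}^{p_i}$. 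For the Triebel--Lizorkin seminorm I will use the difference characterization of Proposition~\ref{prop:equivalentNormF}: one has $\Delta^m_h\tilde u_t(x)=\Delta^m_h u(t+x)$ and $V^m_{\R_+}(x,r)\subseteq V^m_{\R_+}(t+x,r)$ (since $t>0$ relaxes the positivity constraint $x+mh>0$ to $t+x+mh>0$), so $d^m_r\tilde u_t(x)\leq d^m_r u(t+x)$ pointwise; the same tail estimate on $[t,\infty)$ then furnishes the factor $t^{-\gamma_i/p_i}$.

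Applying Theorem~\ref{t:embeddings_continuous_function_spaces} to $\tilde u_t$ with $\gamma_0=\gamma_1=0$, the parameters $\delta_i,\theta,p$ of that theorem collapse onto $\beta_i,\eta,r$, respectively. Together with $\partial^k_s\tilde u_t(0)=(\partial^k_t u)(t)$ and the translation estimate above, this yields
\begin{equation*}
\|(\partial^k_t u)(t)\|_{(X_0,X_1)_{\eta,r}}\lesssim t^{-(1-\eta)\gamma_0/p_0-\eta\gamma_1/p_1}\|u\|_{F^{s_0}_{p_0,q_0}(\R_+,w_{\gamma_0};X_0)}^{1-\eta}\|u\|_{F^{s_1}_{p_1,q_1}(\R_+,w_{\gamma_1};X_1)}^{\eta},
\end{equation*}
which is exactly \eqref{eq:boundedness_restr_sharp_regularization;inhom;weighted_est} since the exponent of $t$ equals $-\mu$. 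Continuity of $t\mapsto(\partial^k_t u)(t)$ in $(X_0,X_1)_{\eta,r}$ on $\R_+$ follows because Theorem~\ref{t:embeddings_continuous_function_spaces} provides continuity of $\partial^k_s\tilde u_t$ on $[0,\infty)$ for every $t>0$, which amounts to continuity of $\partial^k_t u$ on $[t,\infty)$ and hence on all of $\R_+$.

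The main technical obstacle is that Proposition~\ref{prop:equivalentNormF} is formulated for $s_i>0$, while only $s_1-1/p_1<k$ is imposed, so a priori $s_1$ may be nonpositive. For such $s_1$ I will reduce to $s_1>0$ by first applying a Sobolev embedding step in the spirit of Step~4 of the proof of Theorem~\ref{t:trace_FF_FL_spaces}, or alternatively circumvent the difference characterization by arguing directly with the Littlewood--Paley decomposition (translation commutes with each $S_k$) and applying the weighted tail bound to the associated square function. Finally, the $\mF_L$-statement with $s_1=0$ follows, as in Step~5 of that proof, from the embedding $L^{p_1}(\R_+,w_{\gamma_1};X_1)\hookrightarrow F^{0}_{p_1,\infty}(\R_+,w_{\gamma_1};X_1)$ in~\eqref{eq:EmbElementary}.
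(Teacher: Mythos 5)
Your proposal is essentially the paper's own argument: translate (or equivalently, restrict to $(\varepsilon,\infty)$), exploit $\gamma_i\geq 0$ to trade the weight for the factor $\varepsilon^{-\gamma_i/p_i}$, and then invoke the unweighted continuity result (Theorem~\ref{t:embeddings_continuous_function_spaces}) on the translated function, whose parameters $\theta,p$ collapse onto $\eta,r$ precisely because they depend only on the Sobolev indices $\beta_i=s_i-1/p_i$.

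The one place where your plan is a little shaky is the restriction/translation estimate when $s_1$ need not be positive (only $s_1-1/p_1<k$ is assumed). You correctly flag that Proposition~\ref{prop:equivalentNormF} requires $s>0$, but the alternative you sketch — ``arguing directly with the Littlewood--Paley decomposition'' — does not immediately apply, because $F^s_{p,q}(\R_+,w_\gamma;X)$ is a quotient space and the operators $S_k=\varphi_k*$ are not intrinsically defined on $\mathscr D'(\R_+;X)$. The paper's device is the local means characterization of Theorem~\ref{thm:local_mean_Rychkov}, with $\phi_j\in\mathscr D(\R_-)$ so that $\phi_j*f(x)$ depends only on $f|_{(x,\infty)}$; this works for all $s\in\R$ and $w\in A_\infty$ and gives the desired $\varepsilon^{-\gamma/p}$ tail bound directly, with no need to first reduce to $s_1>0$ via Sobolev embedding. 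So the fix you need is to replace ``Littlewood--Paley'' by ``local means on the half-line''; with that substitution your argument is complete and matches the paper's.
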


\begin{proof}
Using Theorem~\ref{thm:local_mean_Rychkov} (or the difference norm of Proposition \ref{prop:equivalentNormF} in the $A_p$-case $\gamma \in [0,p-1)$) it follows that for any Banach space $X$ and for every $\varepsilon>0$ the restriction operator $\Restrepsilon: u\mapsto u|_{[\varepsilon,\infty)}$
maps $F^s_{p,q}(\R_+,w_\g;X)$ into $F^s_{p,q}((\varepsilon,\infty);X)$
and
\[\|\Restrepsilon u\|_{F^s_{p,q}((\varepsilon,\infty);X)}
\lesssim \varepsilon^{-\g/p} \|u\|_{F^s_{p,q}(\R_+,w_\g;X)}, \ \ \ u \in F^s_{p,q}(\R_+,w_\g;X).\]
Therefore, by Theorem~\ref{t:embeddings_continuous_function_spaces} and a translation argument, for every $u \in \mF$ and $\varepsilon > 0$,
\begin{align*}
\|\Restrkepsilon u\|_{C_{\mathrm{b}}([\varepsilon,\infty);(X_0,X_1)_{\eta,r})}
&\lesssim \|\Restrepsilon u\|_{F^{s_0}_{p_0,q_0}((\varepsilon,\infty);X_0)}^{1-\eta}
\|\Restrepsilon u\|_{F^{s_1}_{p_1,q_1}((\varepsilon,\infty);X_1)}^{\eta} \\
&\lesssim \varepsilon^{-\mu}\|u\|_{F^{s_0}_{p_0,q_0}(\R_+,w_{\g_0};X_0)}^{1-\eta}
\|u\|_{F^{s_1}_{p_1,q_1}(\R_+,w_{\g_1};X_1)}^{\eta}.
\end{align*}
The estimate
\eqref{eq:boundedness_restr_sharp_regularization;inhom;weighted_est} follows from this.
The corresponding estimate with $\mF$ replaced by $\mF_L$ subsequently follows from \eqref{eq:EmbElementary} in the same way as in Step~5 of the proof of Theorem~\ref{t:trace_FF_FL_spaces}.
\end{proof}

\begin{remark}\label{r:regularization}
In applications to evolution equations, one typically has $\g_i>0$ for some $i\in \{0,1\}$ and $X_1\hookrightarrow X_0$. In such a case, any $u\in \mF$ \textit{instantaneously regularizes in space}, i.e. for all $\varepsilon>0$
$$
u(\varepsilon)\in (X_0,X_1)_{\eta,r},
\quad \text{ while }\quad
u(0)\in (X_0,X_1)_{\theta,p},
$$
Since $\g_i>0$, one can check that $\eta>\theta$, and therefore by \cite[Theorem 3.4.1]{BeLo}, $(X_0,X_1)_{\eta,r}\hookrightarrow (X_0,X_1)_{\theta,p}$, where the inclusion is strict in general.

The above continuity results will be applied (also in situations with $X_1\not\hookrightarrow X_0$) in Examples \ref{ex:double_fractional} and \ref{ex:stochastic_fractional}.
\end{remark}

\subsection{Consequences in case of homogeneous norms}\label{sec:homF}

In this section we present homogeneous versions of Theorem~\ref{t:trace_FF_FL_spaces}, Theorem~\ref{t:embeddings_continuous_function_spaces} and Theorem~\ref{t:regularization_F} under the additional assumptions that $s_1>0$ and $\gamma_i<p_i-1$ for $i\in \{1, 2\}$. Recall that the seminorms $[u]_{F^{s}_{p,q}(I,w_{\g};X)}$ are defined in \eqref{eq:defFmnorm} and Remark \ref{rem:seminorms}.

\begin{theorem}
\label{t:trace_FF_FL_spaces;hom}
Let $I \in \{\R_+,\R\}$ and let $(X_0, X_1)$ be an interpolation couple of Banach spaces. Let $p_i\in (1, \infty)$, $q_i\in [1, \infty]$, $\g_i\in (-1,p_i-1)$ and $s_i>0$ for $i\in \{0,1\}$.
Assume that $s_0-\frac{1+\g_0}{p_0}>k$ and $s_1-\frac{1+\g_1}{p_1}<k$ for some $k\in\N_0$.
Let $\theta,\delta_0,\delta_1,p$ be as in \eqref{eq:FF_parameters_k}.
Then for all $u \in F^{s_0}_{p_0,q_0}(I,w_{\g_0};X_0)\cap F^{s_1}_{p_1,q_1}(I,w_{\g_1};X_1)$,
\begin{equation}
\label{eq:boundedness_trace_sharp}
\|\Tr^k u\|_{(X_0,X_1)_{\theta,p}}
\lesssim [u]_{F^{s_0}_{p_0,q_0}(I,w_{\g_0};X_0)}^{1-\theta}
[u]_{F^{s_1}_{p_1,q_1}(I,w_{\g_1};X_1)}^\theta.
\end{equation}
Moreover, for $s_1=0$ the same holds true when
$F^{s_1}_{p_1,q_1}(I,w_{\g_1};X_1)$ and $[u]_{F^{s_1}_{p_1,q_1}(I,w_{\g_1};X_1)}$
are replaced by $L^{p_1}(I,w_{\g_1};X_1)$ and $\|u\|_{L^{p_1}(I,w_{\g_1};X_1)}$ in
the above.
\end{theorem}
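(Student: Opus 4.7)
The strategy is a scaling argument: apply the inhomogeneous estimate \eqref{eq:boundedness_trace_sharp;inhom} of Theorem \ref{t:trace_FF_FL_spaces} to the dilations $u_\lambda(t) := u(\lambda t)$ for $\lambda > 0$ and then let $\lambda \to \infty$ to absorb the $L^{p_i}$ contributions on the right-hand side. The key input that makes this work is that under the assumption $\g_i \in (-1, p_i-1)$, the difference seminorm characterization of Proposition \ref{prop:equivalentNormF} together with Remark \ref{rem:seminorms} shows that $[\cdot]_{F^{s_i}_{p_i,q_i}(I,w_{\g_i};X_i)}$ is unambiguously defined and transforms in the expected homogeneous fashion under dilations.

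First I would record the three scaling identities for $u_\lambda$: by \eqref{eq:scalingtrace} the trace satisfies $\|\Tr^k u_\lambda\|_{(X_0,X_1)_{\theta,p}} = \lambda^k \|\Tr^k u\|_{(X_0,X_1)_{\theta,p}}$; a change of variable yields $\|u_\lambda\|_{L^{p_i}(I,w_{\g_i};X_i)} = \lambda^{-(1+\g_i)/p_i} \|u\|_{L^{p_i}(I,w_{\g_i};X_i)}$; and a change of variable inside the difference seminorm of Proposition \ref{prop:equivalentNormF} gives $[u_\lambda]_{F^{s_i}_{p_i,q_i}(I,w_{\g_i};X_i)} = \lambda^{\delta_i}[u]_{F^{s_i}_{p_i,q_i}(I,w_{\g_i};X_i)}$, with $\delta_i = s_i - (1+\g_i)/p_i$. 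Combining these with the equivalence $\|v\|_{F^{s_i}_{p_i,q_i}} \eqsim \|v\|_{L^{p_i}} + [v]_{F^{s_i}_{p_i,q_i}}$ and applying Theorem \ref{t:trace_FF_FL_spaces} to $u_\lambda$ yields
\begin{equation*}
\lambda^k \|\Tr^k u\|_{(X_0,X_1)_{\theta,p}} \lesssim \prod_{i\in\{0,1\}} \bigl(\lambda^{-(1+\g_i)/p_i}\|u\|_{L^{p_i}(I,w_{\g_i};X_i)} + \lambda^{\delta_i}[u]_{F^{s_i}_{p_i,q_i}(I,w_{\g_i};X_i)}\bigr)^{\alpha_i},
\end{equation*}
where $\alpha_0 = 1-\theta$ and $\alpha_1 = \theta$.

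Next I would simplify the exponent. Factoring $\lambda^{-(1+\g_i)/p_i}$ out of each bracket produces an overall prefactor $\lambda^{-(1-\theta)(1+\g_0)/p_0 - \theta(1+\g_1)/p_1}$, and the identity $(1-\theta)\delta_0 + \theta\delta_1 = k$, which is immediate from the definition of $\theta$ in \eqref{eq:FF_parameters_k}, rewrites this as $\lambda^{k - (1-\theta)s_0 - \theta s_1}$. Cancelling $\lambda^k$ on both sides and distributing $\lambda^{-(1-\theta)s_0 - \theta s_1} = (\lambda^{-s_0})^{1-\theta}(\lambda^{-s_1})^{\theta}$ into each bracket, I obtain
\begin{equation*}
\|\Tr^k u\|_{(X_0,X_1)_{\theta,p}} \lesssim \bigl(\lambda^{-s_0}\|u\|_{L^{p_0}(I,w_{\g_0};X_0)} + [u]_{F^{s_0}_{p_0,q_0}(I,w_{\g_0};X_0)}\bigr)^{1-\theta}\bigl(\lambda^{-s_1}\|u\|_{L^{p_1}(I,w_{\g_1};X_1)} + [u]_{F^{s_1}_{p_1,q_1}(I,w_{\g_1};X_1)}\bigr)^{\theta}.
\end{equation*}
Since $s_0,s_1 > 0$, passing to the limit $\lambda \to \infty$ gives \eqref{eq:boundedness_trace_sharp}. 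The case $s_1 = 0$ with $F^{s_1}_{p_1,q_1}(I,w_{\g_1};X_1)$ replaced by $L^{p_1}(I,w_{\g_1};X_1)$ is handled by exactly the same computation: then $\delta_1 = -(1+\g_1)/p_1$ and the $L^{p_1}$-norm of $u_\lambda$ already scales as $\lambda^{\delta_1}$, so the algebra above applies verbatim with the second bracket replaced by $\|u\|_{L^{p_1}(I,w_{\g_1};X_1)}$.

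Beyond the bookkeeping of scaling exponents, the only conceptual obstacle is establishing the scale-invariance of the seminorm $[\cdot]_{F^{s_i}_{p_i,q_i}(I,w_{\g_i};X_i)}$ under dilations, which rests on the $A_{p_i}$ difference-norm characterization and is precisely why the assumption $\g_i \in (-1,p_i-1)$ enters the statement.
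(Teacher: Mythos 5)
Your proposal is correct and follows essentially the same route the paper takes: apply the inhomogeneous estimate of Theorem \ref{t:trace_FF_FL_spaces} to $u_\lambda = u(\lambda\,\cdot\,)$, split the norms into $L^{p_i}$ and seminorm contributions via Proposition \ref{prop:equivalentNormF}, use the scaling laws $\|u_\lambda\|_{L^{p_i}} = \lambda^{-(1+\g_i)/p_i}\|u\|_{L^{p_i}}$ and $[u_\lambda]_{F^{s_i}_{p_i,q_i}} = \lambda^{\delta_i}[u]_{F^{s_i}_{p_i,q_i}}$ together with the identities $(1-\theta)\delta_0+\theta\delta_1 = k$ and $-(1+\g_i)/p_i - \delta_i = -s_i$, and let $\lambda\to\infty$. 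The treatment of the case $s_1=0$ by substituting the $L^{p_1}$-norm for the second factor is also the same short modification the paper invokes.
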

\begin{proof}
We could simply repeat the arguments in Steps 1-3 from the proof of Theorem~\ref{t:trace_FF_FL_spaces}, where we
use the homogeneous versions \eqref{eq:sob_embedding_Triebel_seminorm}, \eqref{eq:c:sob_embedding_Triebel_seminorm}, \eqref{eq:mixed_derivative_seminorm} of the Sobolev embedding Theorem \ref{t:sobolevTriebel}, the Sobolev embedding Corollary~\ref{c:t:sobolevTriebel} and the mixed-derivative Theorem~\ref{t:interpolationA}, respectively, and the homogeneous version of \eqref{eq:boundedness_trace_sharp;hom;proof;St1_derivative}. To be more explicit, the homogeneous version of \eqref{eq:boundedness_trace_sharp;hom;proof;St1_derivative} is
\begin{equation*}
[\partial_t^k u]_{F^{s_i-k}_{p_i,p_i}(I,w_{\g_i};X_i)} \lesssim
[u]_{F^{s_i}_{p_i,p_i}(I,w_{\g_i};X_i)}, \ \  i \in \{0,1\}, \ u\in F^{s_i}_{p_i,p_i}(I,w_{\g_i};X_i)
\end{equation*}
and follows from \eqref{eq:boundedness_trace_sharp;hom;proof;St1_derivative} by a standard scaling argument.

However, we will derive \eqref{eq:boundedness_trace_sharp} from \eqref{eq:boundedness_trace_sharp;inhom} by a scaling argument as in Lemma \ref{l:trace_case_simple}. To this end, consider again $u_\lambda := u(\lambda\,\cdot\,)$ for $\lambda > 0$. By \eqref{eq:scalingtrace}, \eqref{eq:boundedness_trace_sharp;inhom}, the norm equivalence of Proposition \ref{prop:equivalentNormF} applied to $u_\lambda$ we obtain
\begin{align*}
\lambda^k\|\Tr^k u\|_{(X_0,X_1)_{\theta,p}}
&= \|\Tr^k u_\lambda\|_{(X_0,X_1)_{\theta,p}} \\
&\lesssim \|u_\lambda\|_{F^{s_0}_{p_0,q_0}(I,w_{\g_0};X_0)}^{1-\theta}
\|u_\lambda\|_{F^{s_1}_{p_1,q_1}(I,w_{\g_1};X_1)}^\theta \\
&\eqsim \left(\|u_\lambda\|_{L^{p_0}(I,w_{\g_0};X_0)}+[u_\lambda]_{F^{s_0}_{p_0,q_0}(I,w_{\g_0};X_0)}\right)^{1-\theta} \\
&\qquad \cdot \quad \left(\|u_\lambda\|_{L^{p_1}(I,w_{\g_1};X_1)}+[u_\lambda]_{F^{s_1}_{p_1,q_1}(I,w_{\g_1};X_1)}\right)^{\theta} \\
&= \left(\lambda^{-\frac{1+\g_0}{p_0}}\|u\|_{L^{p_0}(I,w_{\g_0};X_0)}+\lambda^{\delta_0}[u]_{F^{s_0}_{p_0,q_0}(I,w_{\g_0};X_0)}\right)^{1-\theta} \\
&\qquad \cdot \quad \left(\lambda^{-\frac{1+\g_1}{p_1}}\|u\|_{L^{p_1}(I,w_{\g_1};X_1)}+\lambda^{\delta_1}[u]_{F^{s_1}_{p_1,q_1}(I,w_{\g_1};X_1)}\right)^{\theta}.
\end{align*}
Since $\delta_0(1-\theta)+\delta_1\theta=k$ and $-\frac{1+\g_i}{p_i}-\delta_i = -s_i$ for $i\in \{0,1\}$, it follows that
\begin{align*}
\|\Tr^k u\|_{(X_0,X_1)_{\theta,p}}
&\lesssim \left(\lambda^{-s_0}\|u\|_{L^{p_0}(\R,w_{\g_0};X_0)}+[u]_{F^{s_0}_{p_0,q_0}(\R,w_{\g_0};X_0)}\right)^{1-\theta} \\
&\qquad  \cdot \quad \left(\lambda^{-s_1}\|u\|_{L^{p_1}(I,w_{\g_1};X_1)}+[u]_{F^{s_1}_{p_1,q_1}(I,w_{\g_1};X_1)}\right)^{\theta}.
\end{align*}
As $s_0,s_1 > 0$, taking the limit $\lambda \to \infty$ gives the desired estimate \eqref{eq:boundedness_trace_sharp}.

With a slight modification of the above scaling argument, the final assertion follows as well.
\end{proof}

In the same way the next results can be derived from Theorems~\ref{t:embeddings_continuous_function_spaces} and \ref{t:regularization_F}.

\begin{theorem}
\label{t:embeddings_continuous_function_spaces;hom}
Let $I \in \{\R_+,\R\}$ and let $(X_0, X_1)$ be an interpolation couple of Banach spaces. Let $p_i\in (1, \infty)$, $q_i\in [1, \infty]$, $\g_i\in [0,p_i-1)$ and $s_i>0$ for $i\in \{0,1\}$. Assume that $s_0-\frac{1+\g_0}{p_0}>k$ and $s_1-\frac{1+\g_1}{p_1}<k$ for some $k\in\N_0$. Let $\theta,\delta_0,\delta_1,p$ as in \eqref{eq:FF_parameters_k}, respectively.
Then for all $u \in F^{s_0}_{p_0,q_0}(I,w_{\g_0};X_0)\cap F^{s_1}_{p_1,q_1}(I,w_{\g_1};X_1)$,
\begin{equation*}
\|\partial^k_t u\|_{C_{\mathrm{b}}(\bar{I};(X_0,X_1)_{\theta,p})}
\lesssim [u]_{F^{s_0}_{p_0,q_0}(I,w_{\g_0};X_0)}^{1-\theta}[u]_{F^{s_1}_{p_1,q_1}(I,w_{\g_1};X_1)}^{\theta}.
\end{equation*}
Moreover, for $s_1=0$ the same holds true when $F^{s_1}_{p_1,q_1}(I,w_{\g_1};X_1)$ and $[u]_{F^{s_1}_{p_1,q_1}(I,w_{\g_1};X_1)}$
are replaced by $L^{p_1}(I,w_{\g_1};X_1)$ and $\|u\|_{L^{p_1}(I,w_{\g_1};X_1)}$ in
the above.
\end{theorem}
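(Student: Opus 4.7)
The plan is to derive this homogeneous estimate from the inhomogeneous Theorem~\ref{t:embeddings_continuous_function_spaces} by the same scaling device that underlies the proof of Theorem~\ref{t:trace_FF_FL_spaces;hom}. Fix $u$ in the intersection space and, for $\lambda>0$, set $u_\lambda:=u(\lambda\,\cdot\,)$. Since the hypotheses $\g_i \in [0,p_i-1)$ are included in those of Theorem~\ref{t:embeddings_continuous_function_spaces} (which only requires $\g_i \in [0,\infty)$), applying that result to $u_\lambda$ yields
\begin{equation*}
\|\partial^k_t u_\lambda\|_{C_{\mathrm{b}}(\bar{I};(X_0,X_1)_{\theta,p})}
\lesssim \|u_\lambda\|_{F^{s_0}_{p_0,q_0}(I,w_{\g_0};X_0)}^{1-\theta}\|u_\lambda\|_{F^{s_1}_{p_1,q_1}(I,w_{\g_1};X_1)}^{\theta}.
\end{equation*}

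Next I would use the elementary identity $\partial^k_t u_\lambda(t)=\lambda^k(\partial^k_t u)(\lambda t)$, which implies $\|\partial^k_t u_\lambda\|_{C_{\mathrm{b}}(\bar{I};(X_0,X_1)_{\theta,p})}=\lambda^k\|\partial^k_t u\|_{C_{\mathrm{b}}(\bar{I};(X_0,X_1)_{\theta,p})}$, together with the norm equivalence of Proposition~\ref{prop:equivalentNormF}, to rewrite each factor on the right-hand side as
$\|u_\lambda\|_{L^{p_i}(I,w_{\g_i};X_i)} + [u_\lambda]_{F^{s_i}_{p_i,q_i}(I,w_{\g_i};X_i)}$, and then invoke the change-of-variables scalings $\|u_\lambda\|_{L^{p_i}(I,w_{\g_i};X_i)}=\lambda^{-(1+\g_i)/p_i}\|u\|_{L^{p_i}(I,w_{\g_i};X_i)}$ and $[u_\lambda]_{F^{s_i}_{p_i,q_i}(I,w_{\g_i};X_i)}=\lambda^{\delta_i}[u]_{F^{s_i}_{p_i,q_i}(I,w_{\g_i};X_i)}$.

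Dividing through by $\lambda^k=\lambda^{\delta_0(1-\theta)+\delta_1\theta}$ and using $-(1+\g_i)/p_i-\delta_i=-s_i$ produces
\begin{equation*}
\|\partial^k_t u\|_{C_{\mathrm{b}}(\bar{I};(X_0,X_1)_{\theta,p})}\lesssim\prod_{i=0,1}\!\left(\lambda^{-s_i}\|u\|_{L^{p_i}(I,w_{\g_i};X_i)}+[u]_{F^{s_i}_{p_i,q_i}(I,w_{\g_i};X_i)}\right)^{\!\theta_i},
\end{equation*}
with $\theta_0=1-\theta$, $\theta_1=\theta$. Because $s_0,s_1>0$, letting $\lambda\to\infty$ kills the $L^{p_i}$ contributions and delivers the claimed homogeneous estimate.

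For the final assertion with $s_1=0$, one applies the $\mF_L$-variant of Theorem~\ref{t:embeddings_continuous_function_spaces} to $u_\lambda$ and repeats the same scaling; the second factor now reads $\|u_\lambda\|_{L^{p_1}(I,w_{\g_1};X_1)}^{\theta}=\lambda^{\delta_1\theta}\|u\|_{L^{p_1}(I,w_{\g_1};X_1)}^{\theta}$ (using $\delta_1=-(1+\g_1)/p_1$), which absorbs cleanly into $\lambda^k$, so the argument proceeds as before and only the $s_0>0$ remainder needs to be sent to zero. I do not anticipate any genuine obstacle: the only point requiring care is the clean cancellation of powers of $\lambda$, which is guaranteed by the defining identity $\delta_0(1-\theta)+\delta_1\theta=k$ for $\theta$, and the scaling of the sup-norm $C_{\mathrm{b}}$, which is straightforward on $I\in\{\R,\R_+\}$.
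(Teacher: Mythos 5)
Your proof is correct and follows precisely the route the paper intends: the paper states that Theorem~\ref{t:embeddings_continuous_function_spaces;hom} is derived from Theorem~\ref{t:embeddings_continuous_function_spaces} ``in the same way'' as Theorem~\ref{t:trace_FF_FL_spaces;hom} was derived from Theorem~\ref{t:trace_FF_FL_spaces}, namely by applying the inhomogeneous estimate to $u_\lambda=u(\lambda\,\cdot)$, rewriting via Proposition~\ref{prop:equivalentNormF}, using the scaling identities of the $L^{p_i}$-norms and difference seminorms, cancelling $\lambda^k$ via $\delta_0(1-\theta)+\delta_1\theta=k$, and letting $\lambda\to\infty$. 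The handling of the $s_1=0$ case via the $\mF_L$ variant is likewise correct.
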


\begin{theorem}[Instantaneous regularization]
\label{t:regularization_F;hom}
Let $(X_0, X_1)$ be an interpolation couple of Banach spaces. Let $p_i\in (1, \infty)$, $q_i\in [1, \infty]$, $\g_i\in [0,p_i-1)$ and $s_i>0$ for $i\in \{0,1\}$.
Assume that $s_0-\frac{1}{p_0}>k$ and $s_1-\frac{1}{p_1}<k$ for some $k\in\N_0$.
Let $\eta, \beta_0, \beta_1, r, \mu$ be as in \eqref{eq:FF_parameters_regularization}.
Then there exists a constant $C$ such that for all $u\in F^{s_0}_{p_0,q_0}(I,w_{\g_0};X_0)\cap F^{s_1}_{p_1,q_1}(I,w_{\g_1};X_1)$,
\begin{equation*}
\|\partial_t^k u\|_{C_{\rm{b},\mu}(\R_+;(X_0,X_1)_{\eta,r})}
\lesssim [u]_{F^{s_0}_{p_0,q_0}(\R_+,w_{\g_0};X_0)}^{1-\eta}[u]_{F^{s_1}_{p_1,q_1}(\R_+,w_{\g_1};X_1)}^\eta.
\end{equation*}
Moreover, for $s_1=0$ the same holds true when $F^{s_1}_{p_1,q_1}(\R_+,w_{\g_1};X_1)$ and $[u]_{F^{s_1}_{p_1,q_1}(\R_+,w_{\g_1};X_1)}$
are replaced by $L^{p_1}$ and $\|u\|_{L^{p_1}(\R_+,w_{\g_1};X_1)}$ in the above.
\end{theorem}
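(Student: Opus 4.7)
The plan is to derive the bound by the same scaling argument used to pass from Theorem~\ref{t:trace_FF_FL_spaces} to Theorem~\ref{t:trace_FF_FL_spaces;hom}, but now starting from the inhomogeneous instantaneous regularization estimate of Theorem~\ref{t:regularization_F}. Concretely, given $u$ in the intersection space, for each $\lambda>0$ I set $u_\lambda(t):=u(\lambda t)$ and feed $u_\lambda$ into \eqref{eq:boundedness_restr_sharp_regularization;inhom;weighted_est}.

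First I would record the three scaling identities that will drive the computation. By change of variables in the defining integrals (for $L^{p_i}$) and by the intrinsic difference characterization of Proposition~\ref{prop:equivalentNormF} (for the seminorms), for $i\in\{0,1\}$,
\[
\|u_\lambda\|_{L^{p_i}(\R_+,w_{\gamma_i};X_i)}=\lambda^{-(1+\gamma_i)/p_i}\|u\|_{L^{p_i}(\R_+,w_{\gamma_i};X_i)}, \qquad [u_\lambda]_{F^{s_i}_{p_i,q_i}(\R_+,w_{\gamma_i};X_i)}=\lambda^{\delta_i}[u]_{F^{s_i}_{p_i,q_i}(\R_+,w_{\gamma_i};X_i)},
\]
with $\delta_i:=s_i-(1+\gamma_i)/p_i$. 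A direct change of variable $t\mapsto t/\lambda$ in $\sup_{t>0} t^{\mu}\|\cdot\|$ also gives, for any Banach space $Y$,
\[
\|\partial_t^k u_\lambda\|_{C_{\mathrm{b},\mu}(\R_+;Y)} = \lambda^{k-\mu}\|\partial_t^k u\|_{C_{\mathrm{b},\mu}(\R_+;Y)}.
\]
Using the definitions in \eqref{eq:FF_parameters_regularization}, one verifies the key compatibility
\[
(1-\eta)\delta_0+\eta\delta_1 = (1-\eta)\beta_0+\eta\beta_1 - \Big(\tfrac{1-\eta}{p_0}\gamma_0+\tfrac{\eta}{p_1}\gamma_1\Big) = k-\mu,
\]
which is precisely what is needed to make the left- and right-hand scaling exponents match.

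Next I apply Theorem~\ref{t:regularization_F} to $u_\lambda$, dominate each inhomogeneous norm $\|u_\lambda\|_{F^{s_i}_{p_i,q_i}(\R_+,w_{\gamma_i};X_i)}$ by the sum of its $L^{p_i}$-norm and its seminorm (Proposition~\ref{prop:equivalentNormF}), and divide both sides by $\lambda^{k-\mu}$. Using the compatibility identity the $\lambda$-powers coming from the seminorm terms cancel exactly, while the $L^{p_i}$ terms acquire factors $\lambda^{-s_i}$ (since $-(1+\gamma_i)/p_i-\delta_i=-s_i$), leaving
\[
\|\partial_t^k u\|_{C_{\mathrm{b},\mu}(\R_+;(X_0,X_1)_{\eta,r})} \lesssim \prod_{i=0}^{1}\Big(\lambda^{-s_i}\|u\|_{L^{p_i}(\R_+,w_{\gamma_i};X_i)} + [u]_{F^{s_i}_{p_i,q_i}(\R_+,w_{\gamma_i};X_i)}\Big)^{\eta_i},
\]
with $\eta_0=1-\eta$ and $\eta_1=\eta$. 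Since $s_0,s_1>0$, sending $\lambda\to\infty$ kills both $L^{p_i}$ contributions and yields the desired homogeneous estimate.

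The case $s_1=0$ is handled in the same way, with the only change that the second factor now comes from the $L^{p_1}$-term itself, and the scaling $\|u_\lambda\|_{L^{p_1}(\R_+,w_{\gamma_1};X_1)}=\lambda^{\delta_1}\|u\|_{L^{p_1}(\R_+,w_{\gamma_1};X_1)}$ is already purely homogeneous in $\lambda$; one then only uses $s_0>0$ to push the residual $L^{p_0}$ term to zero as $\lambda\to\infty$. I do not anticipate any genuine obstruction: the scaling exponents are forced by the definitions, the compatibility identity is elementary arithmetic from \eqref{eq:FF_parameters_regularization}, and the limiting step is immediate because the inequality holds pointwise in $\lambda$. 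The only thing that requires a bit of bookkeeping is keeping the many parameters $p_i,q_i,\gamma_i,s_i,\delta_i,\beta_i,\eta,r,\mu$ aligned when reading off the inhomogeneous estimate and rescaling.
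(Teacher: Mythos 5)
Your proposal is correct and takes exactly the approach the paper indicates: the paper states that Theorem~\ref{t:regularization_F;hom} is ``derived from Theorem~\ref{t:regularization_F} in the same way'' as Theorem~\ref{t:trace_FF_FL_spaces;hom} is derived from Theorem~\ref{t:trace_FF_FL_spaces}, namely by the scaling argument $u_\lambda=u(\lambda\cdot)$, using Proposition~\ref{prop:equivalentNormF} to split each inhomogeneous norm into $L^{p_i}$-part plus seminorm, the scaling identities for the $L^{p_i}$, seminorm, and $C_{\mathrm{b},\mu}$-quantities, and the exponent balance $(1-\eta)\delta_0+\eta\delta_1=k-\mu$, then letting $\lambda\to\infty$. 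Your computations of the scaling exponents and the compatibility identity are all correct, including the adaptation for $s_1=0$, so no gap.
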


\section{Trace Embeddings for Bessel potential spaces}\label{s:traceBessel}

\subsection{The inhomogeneous case}

In view of the elementary embedding \eqref{eq:EmbElementary}, we obtain the following corollary to Theorem \ref{t:trace_FF_FL_spaces}.

\begin{corollary}\label{c:t:trace_FF_FL_spaces;H}
Let $I \in \{\R_+,\R\}$ and let $(X_0, X_1)$ be an interpolation couple of Banach spaces. Let $p_0,p_1\in (1, \infty)$, $\g_0 \in (-1,p_0-1)$, $\g_1 \in (-1,p_1-1)$ and $s_0,s_1\in \R$.
Assume that $s_0-\frac{1+\g_0}{p_0}>k$ and $s_1-\frac{1+\g_1}{p_1}<k$ for some $k\in\N_0$.
Let $\theta,\delta_0,\delta_1,p$ be as in \eqref{eq:FF_parameters_k} and set
\begin{equation}
\label{eq:F_intersected_Triebel_Lizorkin_inhomogeneous_whole_line;H}
\H:= H^{s_0,p_0}(I,w_{\g_0};X_0)\cap H^{s_1,p_1}(	I,w_{\g_1};X_1).
\end{equation}
Then the $k$-th order trace operator $\Tr^k:H^{s_0,p_0}(I,w_{\g_0};X_0) \to X_0$ (see Lemma~\ref{lemma:t:trace_FF_FL_spaces;X_0}) acts a bounded linear operator
\begin{equation}
\label{eq:Trk_FF_FL;H}
\Tr^k: \H \to (X_0,X_1)_{\theta,p},
\end{equation}
and  for all $u \in \H$
\begin{equation}
\label{eq:boundedness_trace_sharp;hom;H}
\|\Tr^k u\|_{(X_0,X_1)_{\theta,p}}
\lesssim \|u\|_{H^{s_0,p_0}(I,w_{\g_0};X_0)}^{1-\theta}
\|u\|_{H^{s_1,p_1}(I,w_{\g_1};X_1)}^\theta.
\end{equation}
\end{corollary}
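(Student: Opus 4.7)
The plan is to deduce the corollary immediately from Theorem \ref{t:trace_FF_FL_spaces} by using the elementary sandwich embedding \eqref{eq:EmbElementary}. The key observation is that the target space $(X_0,X_1)_{\theta,p}$ and the triple $(\theta,\delta_0,\delta_1,p)$ depend only on the Sobolev indices $\delta_i = s_i - (1+\g_i)/p_i$, which are the same for $H^{s_i,p_i}(I,w_{\g_i};X_i)$ and $F^{s_i}_{p_i,\infty}(I,w_{\g_i};X_i)$, so no new parameter tracking is required.

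First I would transfer the $\R^d$-embedding \eqref{eq:EmbElementary} to the interval $I$: by the quotient definition (Definition \ref{def:function_spaces_intervals}), the continuous embedding
\begin{equation*}
H^{s_i,p_i}(I,w_{\g_i};X_i) \hookrightarrow F^{s_i}_{p_i,\infty}(I,w_{\g_i};X_i), \qquad i\in\{0,1\},
\end{equation*}
is inherited from the corresponding embedding on $\R$. Intersecting these for $i=0,1$ gives $\H \hookrightarrow \mF$, where $\mF$ is the intersection in \eqref{eq:F_intersected_Triebel_Lizorkin_inhomogeneous_whole_line} with $q_0=q_1=\infty$. Since the parameter hypotheses of Theorem \ref{t:trace_FF_FL_spaces} are exactly the same (namely $s_0-(1+\g_0)/p_0 > k$ and $s_1-(1+\g_1)/p_1 < k$, together with $p_0,p_1 \in (1,\infty)$ and $\g_i \in (-1,p_i-1)\subset(-1,\infty)$), the theorem applies and yields boundedness $\Tr^k:\mF \to (X_0,X_1)_{\theta,p}$ together with the multiplicative estimate \eqref{eq:boundedness_trace_sharp;inhom}. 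Combining the latter with the embedding constants above produces \eqref{eq:boundedness_trace_sharp;hom;H}.

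The one subtlety I would spell out is that the $k$-th order trace operator coming from Lemma \ref{lemma:t:trace_FF_FL_spaces;X_0} applied to $H^{s_0,p_0}(I,w_{\g_0};X_0)$ coincides, on $\H$, with the one used on $\mF$ via Theorem \ref{t:trace_FF_FL_spaces}. This is immediate from the uniqueness clause of Lemma \ref{lemma:t:trace_FF_FL_spaces;X_0}: both maps are extensions by continuity of $u \mapsto \partial_t^k u(0)$ from, say, the common dense subspace $\mathcal{S}(\R,X_0\cap X_1)|_I$ (cf.\ Lemma \ref{l:density} and Proposition \ref{prop:ext}), and they take values in the same space $X_0$.

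There is essentially no hard part, since the corollary is a direct transfer via \eqref{eq:EmbElementary}; the only point that requires a moment's care is the identification of the two a priori distinct trace maps on $\H$ described above, but uniqueness of the continuous extension handles this cleanly.
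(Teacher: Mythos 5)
Your argument is correct and is precisely the one the paper intends: the corollary is deduced from Theorem \ref{t:trace_FF_FL_spaces} via the sandwich embedding \eqref{eq:EmbElementary} with $q_0=q_1=\infty$, noting that the Sobolev indices (and hence $\theta,p$) are unchanged. The remark about identifying the two a priori different trace maps via the uniqueness clause of Lemma \ref{lemma:t:trace_FF_FL_spaces;X_0} is a sensible extra check, though the paper leaves it implicit.
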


Similarly we obtain the following corollary to Theorems \ref{t:embeddings_continuous_function_spaces} and \ref{t:regularization_F}.
\begin{corollary}\label{c:t:trace_FF_FL_spaces;Hcont}
Let $\gamma_i\in [0,p_i-1)$ for $i\in \{0,1\}$. Then both Theorems \ref{t:embeddings_continuous_function_spaces} and \ref{t:regularization_F} hold with $\mF$ and $F^{s_i}_{p_i,q_i}$ replaced by $\H$ and $H^{s_i,p_i}$ respectively.
\end{corollary}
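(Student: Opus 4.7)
The proof will follow almost immediately by sandwiching Bessel potential spaces between Triebel--Lizorkin spaces via the elementary embedding \eqref{eq:EmbElementary}. Specifically, since $\gamma_i\in [0,p_i-1)\subseteq (-1,p_i-1)$, we have $w_{\gamma_i}\in A_{p_i}$, and hence \eqref{eq:EmbElementary} gives the continuous embedding
\begin{equation*}
H^{s_i,p_i}(I,w_{\g_i};X_i) \hookrightarrow F^{s_i}_{p_i,\infty}(I,w_{\g_i};X_i), \qquad i\in\{0,1\}.
\end{equation*}
In particular, $\H \hookrightarrow \mF$ where $\mF$ is taken with $q_0=q_1=\infty$, and the norms satisfy $\|u\|_{F^{s_i}_{p_i,\infty}(I,w_{\g_i};X_i)} \lesssim \|u\|_{H^{s_i,p_i}(I,w_{\g_i};X_i)}$.

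The plan for the analogue of Theorem~\ref{t:embeddings_continuous_function_spaces} is then to apply the theorem itself to $u\in\H\subseteq\mF$ with $q_0=q_1=\infty$, yielding
\begin{align*}
\|\partial^k_t u\|_{C_{\mathrm{b}}(\bar{I};(X_0,X_1)_{\theta,p})}
&\lesssim \|u\|_{F^{s_0}_{p_0,\infty}(I,w_{\g_0};X_0)}^{1-\theta}\|u\|_{F^{s_1}_{p_1,\infty}(I,w_{\g_1};X_1)}^{\theta}\\
&\lesssim \|u\|_{H^{s_0,p_0}(I,w_{\g_0};X_0)}^{1-\theta}\|u\|_{H^{s_1,p_1}(I,w_{\g_1};X_1)}^{\theta},
\end{align*}
where the last step uses the embedding above. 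The analogue of Theorem~\ref{t:regularization_F} follows identically by applying that theorem on $\mF$ with $q_0=q_1=\infty$ and then invoking the embedding. The addendum for the $s_1=0$ case is vacuous here, since then $H^{0,p_1}(I,w_{\g_1};X_1) = L^{p_1}(I,w_{\g_1};X_1)$ by definition, so no additional argument is needed.

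There is essentially no obstacle: all the substantive work has already been carried out at the level of Triebel--Lizorkin spaces, and the two-sided sandwich \eqref{eq:EmbElementary} together with the fact that the target space $(X_0,X_1)_{\theta,p}$ (respectively $(X_0,X_1)_{\eta,r}$) depends only on $s_i$, $p_i$, $\g_i$ (and not on the microscopic parameter $q_i$) makes the transfer automatic. The only point that requires checking is the well-definedness of $\Tr^k$ on $H^{s_0,p_0}(I,w_{\g_0};X_0)$, which is already provided by Lemma~\ref{lemma:t:trace_FF_FL_spaces;X_0}.
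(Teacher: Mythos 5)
Your proof is correct and takes exactly the route the paper intends: since $w_{\gamma_i}\in A_{p_i}$, the elementary embedding \eqref{eq:EmbElementary} gives $H^{s_i,p_i}(I,w_{\gamma_i};X_i)\hookrightarrow F^{s_i}_{p_i,\infty}(I,w_{\gamma_i};X_i)$, and one then invokes Theorems~\ref{t:embeddings_continuous_function_spaces} and~\ref{t:regularization_F} with $q_0=q_1=\infty$, noting the interpolation target does not depend on the microscopic parameter $q$. The observation that the $s_1=0$ case is subsumed because $H^{0,p_1}=L^{p_1}$ is also right, though strictly speaking not needed since Corollary~\ref{c:t:trace_FF_FL_spaces;Hcont} does not restate the $\mF_L$ variant.
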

The latter extends \cite[Corollary 7.6]{AV20}, where the result was proved under geometric restrictions on $X_0$ and $X_1$, and in the case $X_1$ was the domain of a fractional power of a sectorial operator on $X_0$.

In particular, Theorem \ref{thmintro:t:trace_FF_FL_spaces;H} for $\mathcal{A} = H$ follows from Corollaries \ref{c:t:trace_FF_FL_spaces;H} and \ref{c:t:trace_FF_FL_spaces;Hcont}. The case $\mathcal{A} = W$ follows similarly, since \eqref{eq:EmbElementary} also holds in that case. The case $\mathcal{A} = F$ was already proved in Theorems \ref{t:trace_FF_FL_spaces}, \ref{t:embeddings_continuous_function_spaces} and \ref{t:regularization_F}.

\subsection{Consequences in case of homogeneous norms}

In order to have a presentation which follows the Triebel-Lizorkin case, we set
\[[u]_{H^{s,p}(\R_+,w_{\gamma};X)} : =\|\Dert^s u\|_{L^p(\R_+,w_{\gamma};X)},\]
where $\Dert$ is as in \eqref{eq:defB}. In Proposition \ref{prop:Hinftyder} conditions are discussed under which one can replace $\Dert$ by $\Der$.

The next result is immediate from Lemma \ref{lem:sandwichHRplus}, and moreover by Remark \ref{rem:generalweights} the Bessel potential version of  Theorem
\ref{t:trace_FF_FL_spaces;hom}  actually holds for $\gamma_i<0$ as well.
\begin{corollary}\label{c:t:trace_FF_FL_spaces;H-hom}
Let $\gamma_i\in [0,p_i-1)$ for $i\in \{0,1\}$. Theorems
\ref{t:trace_FF_FL_spaces;hom}, \ref{t:embeddings_continuous_function_spaces;hom} and \ref{t:regularization_F;hom} hold with $F^{s_i}_{p_i,q_i}$ and $[u]_{F^{s_i}_{p_i,q_i}}$ replaced by $H^{s_i,p_i}$ and $[u]_{H^{s_i,p_i}}$,
respectively.
\end{corollary}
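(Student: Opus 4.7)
The plan is to deduce each of the three Bessel-potential statements directly from its Triebel--Lizorkin homogeneous counterpart (Theorems~\ref{t:trace_FF_FL_spaces;hom}, \ref{t:embeddings_continuous_function_spaces;hom}, \ref{t:regularization_F;hom}) by applying the Sandwich Lemma~\ref{lem:sandwichHRplus}. The key estimate I will use is the left-hand inequality in \eqref{eq:equivhom}, namely
$$
[u]_{F^{s}_{p,\infty}(\R_+,w_{\gamma};X)} \lesssim \|\Dert^{s} u\|_{L^p(\R_+,w_{\gamma};X)} = [u]_{H^{s,p}(\R_+,w_{\gamma};X)},
$$
which is available under our hypothesis $\gamma \in [0,p-1)$. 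This lets me dominate the Triebel--Lizorkin $F^{s}_{p,\infty}$-seminorm on the right-hand side of any cited estimate by the Bessel-potential seminorm $[u]_{H^{s,p}}$. Importantly, the endpoint $q=\infty$ is an admissible parameter in each of the three Triebel--Lizorkin theorems.

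Concretely, I would fix $u$ in the Bessel-potential intersection $\Do(\Dert^{s_0}) \cap \Do(\Dert^{s_1})$ (i.e., a function for which both seminorms $[u]_{H^{s_i,p_i}}$ are finite in the sense of this section). Two applications of the sandwich inequality above place $u$ in $F^{s_0}_{p_0,\infty}(\R_+,w_{\g_0};X_0)\cap F^{s_1}_{p_1,\infty}(\R_+,w_{\g_1};X_1)$ with seminorms controlled by the $[u]_{H^{s_i,p_i}}$. Invoking Theorem~\ref{t:trace_FF_FL_spaces;hom} with $q_0=q_1=\infty$ and then chaining with those two bounds produces
$$
\|\Tr^k u\|_{(X_0,X_1)_{\theta,p}} \lesssim [u]_{H^{s_0,p_0}(\R_+,w_{\g_0};X_0)}^{1-\theta}\, [u]_{H^{s_1,p_1}(\R_+,w_{\g_1};X_1)}^{\theta}.
$$
The $k$-th order trace operator is well-defined on this Bessel-potential intersection and consistent with the one used in the Triebel--Lizorkin setting, thanks to Lemma~\ref{lemma:t:trace_FF_FL_spaces;X_0} combined with the inclusion $\Do(\Dert^{s_0}) \hookrightarrow F^{s_0}_{p_0,\infty}(\R_+,w_{\g_0};X_0)$ from Lemma~\ref{lem:sandwichHRplus}.

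The arguments for Theorems~\ref{t:embeddings_continuous_function_spaces;hom} and \ref{t:regularization_F;hom} are entirely parallel: the same substitution on the right-hand side, via the same sandwich inequality, converts the $F$-seminorms into the Bessel-potential seminorms while leaving the left-hand side untouched. For the edge case $s_1=0$ appearing in each theorem, no conversion is needed: the $L^{p_1}$ factor is kept as it is, and only the $s_0>0$ factor is treated via the sandwich estimate.

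I do not anticipate any substantive obstacle. The only subtlety worth flagging is the deliberate use of the weaker ($q=\infty$) side of the sandwich, $\Do(\Dert^{s}) \hookrightarrow F^{s}_{p,\infty}$, rather than the stronger ($q=1$) side --- this is precisely what matches an admissible value of $q$ in the Triebel--Lizorkin results and thus makes the bootstrap work. The fact that Lemma~\ref{lem:sandwichHRplus} requires no geometric assumption on $X$ is what allows the conclusion to hold for arbitrary interpolation couples $(X_0,X_1)$, in line with the main improvement of this paper.
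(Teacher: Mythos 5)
Your proposal is correct and is exactly the argument the paper itself relies on: the paper's proof of Corollary~\ref{c:t:trace_FF_FL_spaces;H-hom} consists of the single remark that it is ``immediate from Lemma~\ref{lem:sandwichHRplus},'' and the mechanism you spell out --- use the inclusion $\Do(\Dert^{s_i})\hookrightarrow F^{s_i}_{p_i,\infty}(\R_+,w_{\g_i};X_i)$ to place $u$ in the Triebel--Lizorkin intersection, apply the homogeneous $F$-theorems with $q_0=q_1=\infty$, and then dominate $[u]_{F^{s_i}_{p_i,\infty}}$ by $[u]_{H^{s_i,p_i}}=\|\Dert^{s_i}u\|_{L^{p_i}}$ via the left inequality in \eqref{eq:equivhom} --- is precisely what is meant. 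Your observation that the $q=\infty$ (weaker) side of the sandwich is the one that matches the admissible parameter range and that the $s_1=0$ case needs no conversion are both the right details to have flagged.
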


Similar results hold with $H$ replaced by $W$, but they follow more directly by using the seminorms $[u]_{W^{s,p}}:=[u]_{F^{s}_{p,p}}$ if $s\notin \N_0$, and
$[u]_{W^{s,p}}:=\|\partial_t^s u\|_{L^p}$ if $s\in \N_0$.

\section{Applications to evolution equations}
\label{s:applications}
In this section we present applications of the above theory to fractional and stochastic evolution equations. Here the main novelty is that using our trace estimates we can provide sharp estimates on $\R_+$ whereas, these were only available on $(0,T)$ before. In a follow-up paper we plan to use these estimates to derive new a priori estimates for quasi-linear (stochastic) PDEs considered on homogenous function spaces with critical scaling (see \cite{AV19_QSEE_1, CriticalQuasilinear}).

\subsection{Fractional evolution equations}
\label{ss:fractional_evolution_equations} Equations of fractional type
arise in many physical applications and it is the basic model for anomalous diffusion, and we refer the reader to \cite{CLS04,F90_1,H00,KSVZ16,P93_evolutionary,Zacher05} for more details.
Evolution equations of fractional type fit in the framework of Volterra integral equations.  Introductions into Volterra equations can be found in \cite{Bazhlekova_2001_dissertation,GLS,P93_evolutionary,Zacher05}.

In this section we will focus on a regularity problem for the fractional evolution equation on a Banach space $X$:
\begin{equation}\label{eq:fractionalintro}
\partial_t^{\alpha} u + Au=f \ \ \text{on $\R_+$},
\end{equation}
where $\alpha\in (0,2)$ and $A$ is a sectorial operator on $X$. For simplicity we only consider Dirichlet boundary conditions: $u(0) = 0$, and $u'(0) = 0$ if $\alpha>1$.

Eq.\ \eqref{eq:fractionalintro} is extensively studied in the literature \cite{P93_evolutionary, Pruss19, Zacher05}, where actually  $\partial_t^{\alpha}$ is replaced by a more general operator. We will focus on the latter case and only consider zero initial values for simplicity.

In \cite{Pruss19} weighted $L^p$-regularity was studied for \eqref{eq:fractionalintro} under the assumption that $0\in \rho(A)$. Moreover, trace estimates for the solution were provided under the assumption that $A^{1/\alpha}$ is sectorial. In this section we will use our results to remove the conditions that $0\in \rho(A)$ and that $A^{1/\alpha}$ is sectorial.

Motivated by \eqref{eq:DerTinverseid} we say that $u$ is a {\em strong solution} to \eqref{eq:fractionalintro} with initial value zero if
\begin{equation}
\label{eq:fractional_evolution}
u(t)+ \mathcal{K}_{\alpha}*A u(t) = \mathcal{K}_{\alpha}*f(t), \quad \text{\text{a.e.} \ $t\geq 0$},
\end{equation}
where $*$ denotes the convolution on $\R$, and where we use the zero extensions of $Au$ and $f$ on $(-\infty, 0)$ and $\mathcal{K}_{\alpha}(t):=\frac{|t|^{\alpha-1} \one_{(0,\infty)}(t)}{\Gamma(\alpha)}$ (see \eqref{eq:DerTinverseid}).

Below we use the more suggestive notation $\partial_t^\alpha = \Der^{\alpha}$, where $\Der$ is as in \eqref{eq:defC}. Our main result concerning \eqref{eq:fractionalintro} or \eqref{eq:fractional_evolution} is:
\begin{theorem}
\label{t:fractional}
Let $X$ be a UMD space. Let $\alpha\in (0,2)$, $p\in (1,\infty)$ and $\g\in [0,p-1)$. Suppose that $A$ is $R$-sectorial on $X$ of angle
$\om_{R}(A)<{\pi}(1-\frac{\alpha}{2})$.
Then for each $f\in L^p(\R_+,w_{\g};X)$ there exists a unique strong solution $u\in L^p_{\loc}([0,\infty),w_{\g};{\Do}(A))$ to \eqref{eq:fractional_evolution} which satisfies
\begin{equation}
\label{eq:fractional_space_time}
\|\partial_t^{\alpha} u\|_{L^{p}(\R_+,w_{\g };X)}+\|A u \|_{L^p(\R_+,w_{\g};X)}\lesssim \|f\|_{L^p(\R_+,w_{\g};X)}.
\end{equation}
Moreover, if $j\in \{0,1\}$ and $\alpha>j+\frac{1+\g}{p}$, then
\[\|\partial_t^j u\|_{ C_{\rm{b}}([0,\infty);\Dd_A(1-\frac{1+\g}{\alpha p}-\frac{j}{\alpha},p))}+
\|\partial_t^j u\|_{ C_{{\rm{b}},\frac{\g}{p}}(\R_+;\Dd_A(1-\frac{1}{\alpha p}-\frac{j}{\alpha},p))}\lesssim \|f\|_{L^p(\R_+,w_{\g};X)},\]
where $C_{{\rm{b}},\g/p}$ is the weighted space defined in \eqref{eq:Cbmu}.
\end{theorem}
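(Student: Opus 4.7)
The plan is to first establish the $L^p$-maximal regularity estimate \eqref{eq:fractional_space_time} via an operator-sum argument, and then to derive the continuity statements as a direct application of Corollaries \ref{c:t:trace_FF_FL_spaces;H} and \ref{c:t:trace_FF_FL_spaces;Hcont}.

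For the maximal regularity, I would recast \eqref{eq:fractional_evolution} as the abstract operator equation $(\Der^\alpha + A)u = f$ on $L^p(\R_+, w_\gamma; X)$; the equivalence with the Volterra formulation uses the identity \eqref{eq:inverseKalpha}. By Proposition \ref{prop:Hinftyder}, $\Der$ has a bounded $H^\infty$-calculus of angle $\pi/2$ on $L^p(\R_+, w_\gamma; X)$, so $\Der^\alpha$ has a bounded $H^\infty$-calculus of angle $\alpha\pi/2$ with $\Do(\Der^\alpha) = \Hz^{\alpha,p}(\R_+, w_\gamma; X)$ away from the exceptional exponent. Extending $A$ pointwise to $L^p(\R_+, w_\gamma; X)$ preserves its $R$-sectoriality and angle (for which the $A_p$-property of $w_\gamma$ on $[0, p-1)$ is crucial, cf.\ Remark \ref{rem:generalweights}). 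Since $\alpha\pi/2 + \omega_R(A) < \pi$, the Kalton-Weis operator-sum theorem yields closedness and the two-sided estimate
\[
\|\Der^\alpha u\|_{L^p(\R_+,w_\gamma;X)} + \|Au\|_{L^p(\R_+,w_\gamma;X)} \lesssim \|(\Der^\alpha + A) u\|_{L^p(\R_+,w_\gamma;X)}
\]
on $\Do(\Der^\alpha) \cap L^p(\R_+, w_\gamma; \Do(A))$. Approximating $A$ by the invertible perturbations $A + \varepsilon$ and passing to the limit via the uniform Kalton-Weis bounds produces a solution for arbitrary $f \in L^p(\R_+, w_\gamma; X)$, while the same a priori estimate yields uniqueness.

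For the continuity statements I would take the interpolation couple $(X_0, X_1) = (X, \Dd(A))$, with $\Dd(A)$ the homogeneous fractional domain from \eqref{eq:def_homogeneous_fractional_powers}; the bound on $\|Au\|_{L^p}$ then gives $u \in L^p(\R_+, w_\gamma; \Dd(A))$. Combined with Proposition \ref{prop:Hinftyder} and the bound on $\|\Der^\alpha u\|_{L^p}$, this places $u$ in $H^{\alpha,p}(\R_+, w_\gamma; X_0) \cap L^p(\R_+, w_\gamma; X_1)$. Applying the $j$-th order trace embedding from Corollary \ref{c:t:trace_FF_FL_spaces;H} with $s_0 = \alpha$, $s_1 = 0$, $p_0 = p_1 = p$, $\gamma_0 = \gamma_1 = \gamma$, $k = j$, a direct computation gives $\theta = 1 - \tfrac{1+\gamma}{\alpha p} - \tfrac{j}{\alpha}$, and the reiteration identity \eqref{eq:Dd_A_reiteration} identifies $(X, \Dd(A))_{\theta,p}$ with $\Dd_A(\theta,p)$; Corollary \ref{c:t:trace_FF_FL_spaces;Hcont} then yields the unweighted $C_{\rm{b}}$ bound. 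For the weighted $C_{{\rm{b}}, \gamma/p}$ bound, invoke the instantaneous regularization statement in Corollary \ref{c:t:trace_FF_FL_spaces;Hcont}, whose parameters in our setting specialize to $\mu = \gamma/p$, $r = p$, and $\eta = 1 - \tfrac{1}{\alpha p} - \tfrac{j}{\alpha}$, matching the claimed space $\Dd_A(\eta, p)$.

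The main obstacle is the operator-sum step: verifying that the pointwise extension of $A$ to the weighted Bochner space retains $R$-sectoriality with the same angle, and that Kalton-Weis produces an invertible operator even though $0 \in \rho(A)$ is not assumed. It is precisely the absence of invertibility of $A$ that forces us onto the homogeneous scale $\Dd(A)$, and the reiteration \eqref{eq:Dd_A_reiteration} is what ensures that the interpolation exponents produced by the general trace embeddings translate correctly into the $\Dd_A$-spaces appearing in the statement.
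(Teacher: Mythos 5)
Your operator-sum argument for the maximal regularity estimate \eqref{eq:fractional_space_time} is in the spirit of the paper's proof (Kalton--Weis applied to $\Der^\alpha + A$ with $\omega_{H^\infty}(\Der^\alpha) = \alpha\pi/2$ via Proposition \ref{prop:Hinftyder}), though the paper constructs the solution differently: it solves on $(0,T)$ where $\Der_T$ is invertible, glues over $n$, and lets $T \to \infty$, rather than perturbing $A$ to $A+\varepsilon$ and passing to the limit. Your existence step is plausible but would need more care about in what sense $u_\varepsilon$ converges, since no uniform $L^p$-bound on $u_\varepsilon$ is available when $0 \notin \rho(A)$.

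The continuity step, however, has a genuine gap. You invoke the \emph{inhomogeneous} trace estimates, Corollaries \ref{c:t:trace_FF_FL_spaces;H} and \ref{c:t:trace_FF_FL_spaces;Hcont}. These bound $\|\partial_t^j u\|_{C_{\mathrm{b}}}$ by $\|u\|_{H^{\alpha,p}(\R_+,w_\g;X)}^{1-\theta}\|u\|_{L^p(\R_+,w_\g;\Dd(A))}^{\theta}$, and the first factor contains $\|u\|_{L^p(\R_+,w_\g;X)}$. When $0 \notin \rho(A)$ this quantity is \emph{not} controlled by $\|f\|_{L^p(\R_+,w_\g;X)}$ --- estimate \eqref{eq:fractional_space_time} only controls $\partial_t^\alpha u$ and $Au$, and $\Dd(A)\not\hookrightarrow X$ in general (see the remark after \eqref{eq:def_homogeneous_fractional_powers}) --- so the resulting bound does not have the claimed form. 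This is not a cosmetic issue: the homogeneity of the final estimate is exactly what fails. You correctly identify that the lack of invertibility forces the homogeneous scale $\Dd(A)$ on the range side, but the same phenomenon forces the \emph{homogeneous} trace Corollary \ref{c:t:trace_FF_FL_spaces;H-hom} on the source side, which only involves the seminorms $\|\Dert^\alpha u\|_{L^p}$ and $\|Au\|_{L^p}$ and is what the paper uses.

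There is a second, related gap: you assert "this places $u$ in $H^{\alpha,p}(\R_+,w_\g;X_0) \cap L^p(\R_+,w_\g;X_1)$," but membership in $H^{\alpha,p}(\R_+,w_\g;X)$ requires $u\in L^p(\R_+,w_\g;X)$, which you have not established. The paper addresses this qualitatively via a density argument, restricting first to $f \in C^\infty_c(\R_+;\Do(A)\cap\Ran(A))$ and observing that replacing $(u,f)$ by $(A^{-1}u, A^{-1}f)$ in \eqref{eq:fractional_space_time} yields $u\in L^p(\R_+,w_\g;X)$ for these special data; only then is the (seminorm-only) homogeneous trace estimate applied, so the qualitative hypothesis can be removed in the end.
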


\begin{proof}
{\em Step 1: Existence and uniqueness on $(0,T)$ with $T\in (0,\infty)$.}
Let $f\in C^\infty([0,T];\Do(A))$. By \cite[Proposition 1.2]{P93_evolutionary} there exists a unique strong solution $u\in  C([0,T];\Do(A))$ to \eqref{eq:fractional_evolution} on $[0,T]$. Then by \eqref{eq:DerTinverseid}
$u\in \Do(\Der_T^{\alpha})$ and \eqref{eq:fractionalintro} holds on $(0,T)$.
We will first show that the following estimate holds uniformly in $T>0$:
\begin{equation}\label{eq:Tfracest}
\|A u\|_{L^{p}(0,T,w_{\g};X)}\lesssim
 \|f\|_{L^{p}(0,T,w_{\g};X)}.
\end{equation}

It follows from the text below Proposition \ref{prop:Hinftyder} that $\Der_T^{\alpha}$ has a bounded $H^{\infty}$-calculus of angle $\frac{\pi}{2}\alpha$ with uniform estimates in $T$.
Since $\om_R(A)+\frac{\pi}{2}\alpha<\pi$, the Kalton--Weis theorem (see \cite[Corollary 4.5.9]{pruss2016moving}) gives \eqref{eq:Tfracest}.

From \eqref{eq:fractional_evolution}, Young's inequality and \eqref{eq:Tfracest}, we additionally obtain that
\begin{equation}
\label{eq:fractional_space_timeT}
\|u\|_{L^{p}(0,T,w_{\g };X)} \lesssim_T \|f\|_{L^p(0,T,w_{\g};X)}.
\end{equation}

Via a standard density argument using \eqref{eq:Tfracest} and \eqref{eq:fractional_space_timeT} we obtain existence and uniqueness of a strong solution to \eqref{eq:fractional_evolution} for general $f\in L^p(0,T,w_{\g};X)$. Moreover, the estimates \eqref{eq:Tfracest} and \eqref{eq:fractional_space_timeT} hold as well.

{\em Step 2. Existence and uniqueness on $(0,\infty)$ and the proof of \eqref{eq:fractional_space_time}:}
Let $f\in L^p(\R_+,w_{\gamma};X)$, and for each integer $n\geq 1$ let $u^n$ be the solution to  \eqref{eq:fractional_evolution} on $[0,n]$. Then by uniqueness, for all $n\geq m$, $u^n = u^m$ on $[0,m]$. Therefore, defining $u:[0,\infty)\to X$ by $u(t) =  u^n(t)$ for $t\in [0,n]$, we obtain a strong solution to \eqref{eq:fractional_evolution}. Uniqueness is clear from the uniqueness on finite time intervals. From Step 1 we see that \eqref{eq:Tfracest} holds with $T$-independent constants. Letting $T\to \infty$, we obtain \eqref{eq:Tfracest} for $T=\infty$.
From \eqref{eq:fractional_evolution} and \eqref{eq:inverseKalpha} we deduce that $u\in \Dd(\Der^{\alpha})$ and $\Der^{\alpha} u = f - A u$. This also implies the estimate \eqref{eq:fractional_space_time}.

{\em Step 3. Proof of the trace estimate:}
As before, by density it suffices to consider $f\in C^{\infty}_c(\R_+;\Do(A)\cap \Ran(A))$. Since $A^{-1} f\in C^\infty_c(\R_+;\Do(A))$, the above estimates also hold with $(u,f)$ replaced by $(A^{-1} u, A^{-1} f)$. By \eqref{eq:fractional_space_time} this implies $u\in L^{p}(\R_+,w_{\g};X)$ for these special functions $f$. Together with \eqref{eq:fractional_space_time} and Proposition \ref{prop:Hinftyder} this implies $u\in \Do(\Der^{\alpha})\hookrightarrow  H^{\alpha,p}(\R_+,w_{\g};X)$.
Now if $\alpha>\frac{1+\gamma}{p}+j$ with $j\in \{0,1\}$, then by Corollary \ref{c:t:trace_FF_FL_spaces;H-hom} with $\theta = 1-\frac{1+\g}{\alpha p}-\frac{j}{\alpha}$,  \eqref{eq:fractional_space_time} and Proposition \ref{prop:Hinftyder},
\begin{align*}
\|\partial_t^j u\|_{ C_{\rm{b}}([0,\infty);\Dd_A(\theta,p))} &\lesssim [\Dert^{\alpha} u]_{L^p(\R_+,w_{\g};X)}^{1-\theta} \|u\|_{L^p(\R_+,w_{\g};\Dd(A))}^{\theta}
\\ & = [\Der^{\alpha} u]_{L^p(\R_+,w_{\g};X)}^{1-\theta} \|Au\|_{L^p(\R_+,w_{\g};X)}^{\theta}
\\ & \lesssim \|f\|_{L^p(\R_+,w_{\g};X)}
\end{align*}
The $C_{{\rm{b}},\gamma/p}$-term is estimated similarly.
\end{proof}

Note that if $0\in \rho(A)$ in Theorem \ref{t:fractional}, then one obtains \[\|u\|_{H^{\alpha,p}(\R_+,w_{\g };X)} + \|u\|_{L^{p}(\R_+,w_{\g };\Do(A))}\lesssim \|f\|_{L^{p}(\R_+,w_{\g };X)}\]
and therefore $\dot{\Do}_A$ can be replaced by $\Do_A$ in the final estimate of Theorem \ref{t:fractional}. The same holds if $\R_+$ is replaced by $(0,T)$ with $T\in (0,\infty)$.

We conclude this section by analysing a double non-local diffusion equation on $\R^d$ (see \cite{KSZ17} for the case $\alpha\in (0,1)$). Setting $\beta=1$, one obtains the fractional heat equation.

\begin{example}[Double fractional diffusion equation]
\label{ex:double_fractional}
Let $\alpha,\beta>0$, $q,p\in (1,\infty)$ and $\g\in [0,p-1)$ be such that $\alpha\in(\frac{1+\g}{p},2)$. On $\R^d$ consider:
\begin{equation}
\label{eq:double_fractional_heat_equation}
\partial_t^{\alpha} u(t)+ (-\Delta)^{\beta} u(t)= f(t),\quad t>0,
\end{equation}
with Dirichlet initial condition(s). To recast the above problem in the form \eqref{eq:fractional_evolution} we introduce the fractional Laplacian.
To begin, let us regard the Laplace operator as a map
$$A_L:=-\Delta: W^{2,q}(\R^d)\subseteq L^q(\R^d)\to L^q(\R^d).$$
By \cite[Theorem 10.2.25]{Analysis2}, $A$ has a bounded $H^{\infty}$-calculus of $\angH(A_L)=0$. Therefore, $A^{\beta}_L$ has a bounded $H^{\infty}$-calculus of angle $0$ and $\Do(A_L^{\beta})=H^{2\beta,q}(\R^d)$.
In particular, by \cite[Theorem 10.3.4(2)]{Analysis2}, $A^{\beta}_L$ is $R$-sectorial and $\om_{R}(A_{L}^{\beta})=0$.
As one may expect, the operator just defined satisfies
\begin{equation}
\label{eq:fractional_Laplacian}
A_L^{\beta}f=\Four^{-1}(|\cdot|^{2\beta}\Four(f)),
\end{equation}
where $\Four$ is the Fourier transform and $f \in H^{2\beta,q}(\R^d)$ (see e.g.\ \cite[Subsection 8.3]{Haase:2}).

Combining the description of the scale $\Dd(A^{\beta}_L)$ in \cite[p. 234]{Haase:2} and the interpolation results in \cite[Theorem 6.3.1]{BeLo} we get, for all $\eta\geq 0$ and $p\in (1,\infty)$,
\begin{equation}
\label{eq:description_Dd_fractional_laplacian}
\Dd((A_L^{\beta})^{\eta})=\dot{H}^{2\beta\eta,q}(\R^d),
\quad \text{ and }\quad
\Dd_{A^{\beta}_L}(\eta,p)=\dot{B}^{2\eta\beta}_{q,p}(\R^d)
\end{equation}
where $\dot{H}$ and $\dot{B}$ denote the \textit{homogeneous} Bessel potential and Besov spaces (see e.g.\ \cite[Section 6.3]{BeLo}), respectively. Now we have rewritten \eqref{eq:double_fractional_heat_equation} in the form \eqref{eq:fractional_evolution} with $A = A^{\beta}_L$. Therefore, Theorem \ref{t:fractional} ensures that for each
$
f\in L^q(\R_+,w_{\g};L^q(\R^d))
$
there exists a unique strong solution $u\in L^p_{\loc}([0,\infty),w_{\g};H^{2\beta,q}(\R^d))$ to \eqref{eq:double_fractional_heat_equation}, and
\begin{align*}
\|\partial_t^{\alpha} u\|_{L^q(\R_+,w_{\g};L^q(\R^d))}+\|u \|_{L^q(\R_+,w_{\g};\dot{H}^{2\beta,q}(\R^d))}   \lesssim \|f\|_{L^q(\R_+,w_{\g};L^q(\R^d))}.
\end{align*}
Moreover, if $\alpha>j+\frac{1+\g}{p}$ with $j\in \{0,1\}$,  then
\begin{align*}\|\partial^j_t u\|_{C_{\rm{b}}([0,\infty);\dot{B}^{\delta}_{q,p}(\R^d))} +&
\|u\|_{C_{{\rm{b}},\gamma/p}(\R_+;\dot{B}^{\varepsilon}_{q,p}(\R^d))} \lesssim \|f\|_{L^q(\R_+,w_{\g};L^q(\R^d))}
\end{align*}
where $\delta = 2\beta(1-\frac{1+\g}{\alpha p} -\frac{j}{\alpha})$ and $\varepsilon = 2\beta(1-\frac{1}{\alpha p}-\frac{j}{\alpha})$.
\end{example}

\begin{remark}
Example \ref{ex:double_fractional} can easily be extended to the case where $\Delta$ is replaced by an elliptic operator with $x$-dependent coefficients under continuity conditions on the coefficients. Moreover, the abstract setting of Theorem \ref{t:fractional} is flexible enough to cover boundary value problems as well.
\end{remark}

\begin{remark}
A special class of Volterra type equations in the case of coefficients which are $(t,x)$-dependent is treated in \cite{DongKim20,DongKim21,DongLiu} assuming only measurability in $t$ and very weak conditions in space. They prove maximal regularity results, and their approach is completely different than the one considered here. They did not consider trace estimates, but they can easily be obtained by combining their results with ours.
\end{remark}

\subsection{Stochastic maximal $L^p$-regularity and homogeneous spaces}

In this section we prove some new regularity estimates for stochastic convolutions using the trace estimates of the previous section, which extend some of the stochastic maximal regularity results in \cite{AV20, NVWSMR} to the case where $0\notin \rho(A)$. Similar extensions can be done for the stochastic Volterra equations considered in \cite{DeschLonden13}, but we will not consider those here.

Stochastic maximal regularity has been recently used to obtain well-posedness for quasilinear stochastic PDE in \cite{AV19_QSEE_1, AV19_QSEE_2}. For unexplained notations and in particular stochastic integration theory and the theory of $\gamma$-radonifying operators we refer the reader to the latter two papers and \cite[Chapter 9]{Analysis2}. For a general overview on stochastic evolution equations we refer to \cite{DPZ}.

Let $H$ be a Hilbert space and let $W_H$ an $H$-cylindrical Brownian motion on a filtered probability space $(\O,\Fsigma,(\F_t)_{t\geq 0},\P)$.
Let $X$ be a Banach space with UMD and type $2$ (see e.g.\ \cite[Chapter 7]{Analysis2}), and let $\gamma(H,X)$ denote the space of $\gamma$-radonifying operators (see e.g.\ \cite[Chapter 9]{Analysis2}).  Given a $C_0$-semigroup $(S(t))_{t\geq 0}$ with generator $-A$, we consider the stochastic evolution equation:
\begin{equation}
\label{eq:stochastic_evolution_equation}
du(t) +A u(t) dt =G(t)dW_H(t),\qquad t\in \R_+, \ \ \  u(0) = 0.
\end{equation}
Here $G\in L^0(\Omega;L^2_{\rm loc}(\R_+;\gamma(H,X)))$ is strongly progressively measurable.
The {\em mild solution} to \eqref{eq:stochastic_evolution_equation} is given by the \emph{stochastic convolution} $S\diamond G:[0,\infty)\to X$ defined as
\begin{equation}
\label{eq:stochastic_cauchy_problem_integral}
u(t) = S\diamond G(t):=\int_0^t S(t-s)G(s)dW_H(s), \quad t\geq 0.
\end{equation}

Below we use the more suggestive notation $\partial_t^\alpha = \Der^{\alpha}$, where $\Der$ is as in \eqref{eq:defC}. The main result of this section is as follows.
\begin{theorem}
\label{t:stochastic_maximal}
Let $p\in (2,\infty)$, $\a\in [0,\frac{p}{2}-1)$ and $\theta\in [0,\frac{1}{2})$.
Let $X$ be isomorphic to a closed subspace of an $L^q$-space over a sigma-finite measure space with $q\in [2,\infty)$. Assume that $A$ has a bounded $H^{\infty}$-calculus on $X$ and $\angH(A)<\frac{\pi}{2}$. Let $(S(t))_{t\geq 0}$ be the semigroup generated by $-A$. Let $G\in L^p(\R_+\times \Omega,w_{\a};\g(H,X))$ be strongly progressive measurable. Then for all $\theta\in [0,\frac{1}{2})$
the mild solution $u$ to \eqref{eq:stochastic_evolution_equation} satisfies
\begin{equation}
\label{eq:stochastic_convolution_space_time_estimate}
\E\|\partial_t^{\theta} A^{\frac{1}{2}-\theta} u\|_{L^p(\R_+,w_{\a};X)}^p
\lesssim \E\|G\|_{L^p(\R_+,w_{\a};\g(H,X))}^p.
\end{equation}
Moreover,
\[\E\|u\|_{C_{\rm{b}}([0,\infty);\Dd_A({\frac{1}{2}-\frac{1+\a}{p},p}))}^p + \E\|u\|_{C_{{\rm{b}},\frac{\a}{p}}(\R_+;\Dd_A({\frac{1}{2}-\frac{1}{p},p}))}^p\lesssim \E\|G\|_{L^p(\R_+,w_{\a};\g(H,X))}^p,\]
where $C_{{\rm{b}},\a/p}$ is the weighted space defined in \eqref{eq:Cbmu}.
\end{theorem}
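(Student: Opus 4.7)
The plan is to first establish the space--time estimate \eqref{eq:stochastic_convolution_space_time_estimate} and then derive the continuity statements by applying Corollary~\ref{c:t:trace_FF_FL_spaces;H-hom} in the homogeneous scale of $A$.

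For \eqref{eq:stochastic_convolution_space_time_estimate} I would argue in two steps. For $\theta=0$, the estimate $\E\|A^{1/2}u\|_{L^p(\R_+,w_\a;X)}^p\lesssim \E\|G\|_{L^p}^p$ reduces to the stochastic maximal $L^p$-regularity of \cite{NVWSMR,AV20} under the extra assumption $0\in\rho(A)$. To remove this assumption, perturb by $A_{\lambda}:=A+\lambda$ for $\lambda>0$: then $0\in\rho(A_{\lambda})$ and $\angH(A_{\lambda})\leq\angH(A)<\pi/2$, so the cited results yield the $\lambda$-uniform bound $\E\|A_\lambda^{1/2}u_\lambda\|_{L^p(\R_+,w_\a;X)}^p\lesssim \E\|G\|_{L^p}^p$ for $u_\lambda(t)=\int_0^t e^{-\lambda(t-s)}S(t-s)G(s)\,dW_H(s)$. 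Passing to the limit $\lambda\downarrow 0$ via pointwise convergence $u_\lambda(t)\to u(t)$, Fatou's lemma, and identification of $A^{1/2}u$ in the completion $\Dd(A^{1/2})$, yields the case $\theta=0$. For $\theta\in(0,1/2)$, I would appeal to joint bounded $H^\infty$-calculus of $\Der$ and $A$ on $L^p(\R_+,w_\a;X)$: by Proposition~\ref{prop:Hinftyder}, $\Der$ has a bounded $H^\infty$-calculus of angle $\pi/2$, the pointwise action of $A$ has one of angle $\angH(A)<\pi/2$, and the two resolvent-commute; since the angles sum to less than $\pi$ and the UMD/type-$2$/$L^q$-subspace hypotheses supply the $R$-boundedness input required by the Kalton--Weis theorem, one obtains a joint bounded calculus, from which the combination $\Der^\theta A^{1/2-\theta}u$ is controlled by $A^{1/2}u$ in $L^p$, giving \eqref{eq:stochastic_convolution_space_time_estimate} in full generality.

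Given \eqref{eq:stochastic_convolution_space_time_estimate}, the continuity statements follow from Corollary~\ref{c:t:trace_FF_FL_spaces;H-hom}. Since $\a<\tfrac{p}{2}-1$, the interval $((1+\a)/p,1/2)$ is nonempty; fix $\tau$ in it and set $v:=A^{1/2-\tau}u$. By \eqref{eq:stochastic_convolution_space_time_estimate}, $\Der^{\tau}v\in L^p(\R_+,w_\a;X)$ and $A^{\tau}v=A^{1/2}u\in L^p(\R_+,w_\a;X)$, so $v$ lies in $\dot{H}^{\tau,p}(\R_+,w_\a;X)$ (via Proposition~\ref{prop:Hinftyder}) as well as in $L^p(\R_+,w_\a;\Dd(A^{\tau}))$, with $L^p(\Omega)$-norms controlled by $\E\|G\|_{L^p}^p$. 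Applying Corollary~\ref{c:t:trace_FF_FL_spaces;H-hom} with $X_0=X$, $X_1=\Dd(A^{\tau})$, $s_0=\tau$, $s_1=0$, $p_0=p_1=p$, $\g_0=\g_1=\a$, $k=0$, and identifying the target interpolation spaces via the reiteration formula \eqref{eq:Dd_A_reiteration}, we obtain
\[
v\in C_{\mathrm{b}}\big([0,\infty);\Dd_A(\tau-\tfrac{1+\a}{p},p)\big)\cap C_{\mathrm{b},\a/p}\big(\R_+;\Dd_A(\tau-\tfrac{1}{p},p)\big).
\]
Since $A^{-(1/2-\tau)}$ is an isomorphism from $\Dd_A(s,p)$ onto $\Dd_A(s+\tfrac{1}{2}-\tau,p)$ for every $s\in\R$, the identity $u=A^{-(1/2-\tau)}v$ transfers these bounds to $u$ in the target spaces $\Dd_A(\tfrac{1}{2}-\tfrac{1+\a}{p},p)$ and $\Dd_A(\tfrac{1}{2}-\tfrac{1}{p},p)$, respectively.

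The main obstacle is \eqref{eq:stochastic_convolution_space_time_estimate} itself: extending the stochastic maximal regularity from the $0\in\rho(A)$ setting to the homogeneous scale requires careful handling of the $\lambda\downarrow 0$ limit within the completion defining $\Dd(A^{1/2})$, and the passage from $\theta=0$ to $\theta\in(0,1/2)$ rests on non-trivial joint functional calculus whose $R$-boundedness hypotheses are precisely what the UMD, type-$2$, and $L^q$-subspace assumptions on $X$ are tailored to supply.
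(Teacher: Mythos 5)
Your argument for passing from $0 \in \rho(A)$ to the general case via the perturbation $A_\lambda = A+\lambda$ and a limiting argument is in the spirit of the paper's proof, and your derivation of the continuity estimates from \eqref{eq:stochastic_convolution_space_time_estimate} via Corollary~\ref{c:t:trace_FF_FL_spaces;H-hom} (with $X_0=X$, $X_1=\Dd(A^\tau)$ and the reiteration \eqref{eq:Dd_A_reiteration}) is a valid bookkeeping variant of the paper's (which takes $X_0=\Dd(A^{1/2-\sigma})$, $X_1=\Dd(A^{1/2})$). However, there is a genuine gap in your treatment of \eqref{eq:stochastic_convolution_space_time_estimate} for $\theta\in(0,1/2)$.

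You propose to deduce the case $\theta>0$ from the case $\theta=0$ via a joint $H^\infty$-calculus of $\Der$ and $\mathscr{A}$, asserting that ``the combination $\Der^\theta A^{\frac12-\theta}u$ is controlled by $A^{\frac12}u$ in $L^p$.'' This cannot work: it would amount to the boundedness of an operator of the form $\Der^\theta \mathscr{A}^{-\theta}$, which corresponds to the symbol $f(z_1,z_2)=z_1^\theta z_2^{-\theta}$, and this is \emph{not} a bounded $H^\infty$-function on a product of sectors (it is unbounded as $z_2\to 0$ with $z_1$ fixed). A joint bounded $H^\infty$-calculus only yields boundedness of $f(\Der,\mathscr{A})$ for $f$ bounded holomorphic, so the mixed estimate is not a consequence of the pure spatial one. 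In fact the estimate for $\theta>0$ is a genuinely distinct piece of stochastic maximal regularity: the paper estimates $\E\|A_\varepsilon^{\frac12-\theta}S_{\varepsilon,\theta}\diamond G\|_{L^p(\R_+,w_\a;X)}^p \lesssim \E\|G\|_{L^p(\R_+,w_\a;\g(H,X))}^p$ for the \emph{fractional} stochastic convolution $S_{\varepsilon,\theta}(t)=\frac{t^{-\theta}}{\Gamma(1-\theta)}S_\varepsilon(t)$, appealing to \cite[Theorem 4.3]{NVWSMR} and \cite[Lemma 7.11]{AV20}. It then uses the identity $\Co_\varepsilon^\theta(A_\varepsilon^{\frac12-\theta}u_\varepsilon)=A_\varepsilon^{\frac12-\theta}S_{\varepsilon,\theta}\diamond G$ with $\Co_\varepsilon=\Der+\mathscr{A}_\varepsilon$ and the boundedness of $\Der^\theta\Co_\varepsilon^{-\theta}$ (which corresponds to the bounded symbol $z_1^\theta(z_1+z_2)^{-\theta}$, and so \emph{is} accessible to the Kalton--Weis theorem). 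Your sketch of joint functional calculus would only give you this second step, not the primary estimate; the fractional stochastic convolution input is missing from your argument, and without it the proof of \eqref{eq:stochastic_convolution_space_time_estimate} for $\theta>0$ does not go through.

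A secondary point: your $\lambda\downarrow 0$ limiting argument via pointwise convergence and Fatou needs some care in identifying the limit as $A^{1/2}u$ in the homogeneous scale $\Dd(A^{1/2})$ (since $u(t)$ need not lie in $\Do(A^{1/2})$), but this can be made rigorous along the lines of the paper's weak-convergence-on-finite-intervals argument; this is a matter of detail rather than a conceptual obstacle.
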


\begin{proof}
We first prove \eqref{eq:stochastic_convolution_space_time_estimate}. By an approximation argument it suffices to prove the estimates for a uniformly bounded progressively measurable step process $G: \R_+\times \O\to \g(H,\Do(A))$. The method is similar to \cite[Theorem 1.2]{NVWSMR} (and \cite[Theorem 7.16]{AV20} in the weighted case). However, since we do not assume $0\in \rho(A)$, some modifications are required.

{\em Step 1: Convergence of approximating problem.} In some part of the proof below, we need invertibility of the involved operators. Therefore, we consider $A_{\varepsilon} = A+\varepsilon$ and $S_{\varepsilon}(t) = e^{-\varepsilon t} S(t)$, where $\varepsilon\geq 0$. Let $u_{\varepsilon} = S_{\varepsilon} \diamond G$. Then $u_{\varepsilon}(t)\in \Do(A)$ a.s.\ for all $\varepsilon,t\geq 0$. Next, we prove that
\begin{equation}
\label{eq:convergence_delta_A_u_varepsilon}
 A^{\delta}_{\varepsilon}u_{\varepsilon}\to A^{\delta}u \ \text{in} \ Y_T:= L^p((0,T) \times \Omega;\Do(A)), \ \ \text{for all} \ \delta\in [0,1],\, T\in (0,\infty).
\end{equation}
Using \cite[Corollary 3.10]{NVW1} one can check that \eqref{eq:convergence_delta_A_u_varepsilon} holds for $\delta=1$.
For $\delta\in [0,1)$, note that
\begin{align}\label{eq:estAeps}
\|A^{\delta}_{\varepsilon}u_{\varepsilon} - A^{\delta}u\|_{Y_T} &\leq \|A^{\delta}_{\varepsilon} (u_{\varepsilon} - u)\|_{Y_T}  + \|A^{\delta}_{\varepsilon}u- A^{\delta}u\|_{Y_T}.
\end{align}
For the first term in \eqref{eq:estAeps}, by the moment inequality (see \cite[Proposition 6.6.4]{Haase:2}) and H\"older's inequality we find that
\[\|A^{\delta}_{\varepsilon} (u_{\varepsilon} - u)\|_{Y_T}\leq \|u_{\varepsilon} - u\|_{Y_T}^{1-\delta} \|(A+\varepsilon)(u_{\varepsilon} - u)\|_{Y_T}^{\delta}\to 0.\]
The second term in \eqref{eq:estAeps} tends to zero by \cite[Lemma 4.1.11]{Luninterp} and dominated convergence.

{\em Step 2: A priori estimates uniformly in $\varepsilon\in (0,1]$.}
Since $A$ has a bounded $H^{\infty}$-calculus of angle $\angH(A)<\frac{\pi}{2}$, the same holds for $A+\varepsilon$ with uniform estimates in $\varepsilon$. Since we assumed $X$ has a special structure, by \cite[Theorem 4.3]{NVWSMR} and \cite[Lemma 7.11]{AV20}, for all $\theta\in [0,1/2)$,
\begin{equation}
\label{eq:estimate_SMR_homongeneous}
\E\|A_{\varepsilon}^{\frac{1}{2}-\theta} S_{\varepsilon,\theta}\diamond G\|_{L^p(\R_+,w_{\a};X)}^p\lesssim \E\|G\|_{L^p(\R_+,w_{\a};\g(H,X))}^p,
\end{equation}
where $S_{\varepsilon,\theta}(t)= \frac{t^{-\theta}}{\Gamma(1-\theta)} S_{\varepsilon}(t)$ and the implicit constant does not depend on $\varepsilon$.

Next we prove \eqref{eq:stochastic_convolution_space_time_estimate} for $\theta\in (0,1/2)$ with $(u,A)$ replaced by $(u_{\varepsilon},A_{\varepsilon})$ with uniform estimates for $\varepsilon\in [0,1]$. Let $\mathscr{A}_{\varepsilon}$ be the closed linear densely defined operator on $L^p(\R_+,w_{\a};X)$ with domain $\Do(\mathscr{A}_{\varepsilon})=L^p(\R_+,w_{\a};\Do(A))$ given by $\mathscr{A}_{\varepsilon}v(t):=A_{\varepsilon}v(t)$ for all $t\in \R_+$. One can check that $\mathscr{A}_{\varepsilon}$ has a bounded $H^{\infty}$-calculus of angle $\angH(\mathscr{A}_{\varepsilon})=\angH(A_{\varepsilon})<\pi/2$ with uniform estimates in $\varepsilon\geq 0$. Moreover, by Proposition \ref{prop:Hinftyder}, the derivative $\Der$ defined in \eqref{eq:defC} has a bounded $H^\infty$-calculus of angle $\pi/2$.

From now one let $\varepsilon>0$. By \cite[Corollary 4.5.9]{pruss2016moving}, $\Co_{\varepsilon}:=\Der+\mathscr{A}_{\varepsilon}$ with $\Do(\Co_{\varepsilon}) =\Do(\Der)\cap \Do(\mathscr{A}_{\varepsilon})$ is a sectorial operator on $L^p(\R_+,w_{\a};X)$, and as in \cite{NVWSMR},
$$
\Co^{-\theta}_{\varepsilon}f(s)=\frac{1}{\Gamma(\theta)}\int_0^t (t-s)^{\theta-1}S_{\varepsilon}(t-s)f(s)ds, \ \ \ f\in L^p(\R_+,w_{\a};X).
$$
Moreover, arguing as in the proof of \cite[Theorem 1.2]{NVWSMR}, one obtains that
\begin{equation*}
 \Co^{-\theta}_{\varepsilon}(A^{\frac{1}{2}-\theta}_{\varepsilon}S_{\varepsilon,\theta}\diamond G)(t)=A^{\frac{1}{2}-\theta}_{\varepsilon}u_{\varepsilon}(t),\ \
 \text{ a.s.\ for all }t\in \R_+.
\end{equation*}
Combining this with $A^{\frac{1}{2}-\theta}_{\varepsilon}S_{\varepsilon,\theta}\diamond G, A^{\frac{1}{2}-\theta}_{\varepsilon}u_{\varepsilon}\in L^p(\R_+,w_{\a};X)$ a.s., it follows that $A^{\frac{1}{2}-\theta}_{\varepsilon}u_{\varepsilon}\in \Do(\Co^{\theta})$ a.s.\ and
\begin{equation}
\label{eq:stochastic_proof_equality}
 A^{\frac{1}{2}-\theta}_{\varepsilon}S_{\varepsilon,\theta}\diamond G(t)=\Co^{\theta}_{\varepsilon}(A^{\frac{1}{2}-\theta}_{\varepsilon}u_{\varepsilon})(t),\ \
 \text{ a.s.\ for all }t\in \R_+.
\end{equation}
To proceed further, one can check that, $\mathscr{A}_{\varepsilon}$ is $R$-sectorial of angle $<\pi/2$ with uniform estimates in $\varepsilon$ (see \cite[Theorem 10.3.4(2)]{Analysis2}). Thus, by \cite[Lemma 10]{KuWePert}, the set $\{\lambda^{\theta}(\lambda+\mathscr{A}_{\varepsilon})^{-\theta}:\lambda\in \Sigma_{\phi}\}$ is $R$-bounded for some $\phi\in (\frac{\pi}{2},\pi)$ with uniform estimates in $\varepsilon$. The Kalton-Weis theorem \cite[Theorem 4.5.6]{pruss2016moving} ensures that
\begin{equation}
\label{eq:boundedness_Co_partial}
\Der^{\theta}\Co^{-\theta}_{\varepsilon}\in \calL(L^{p}(\R_+,w_{\a};X))
\end{equation}
with uniform estimates in $\varepsilon$. Therefore,
\begin{align*}
 \E\|\Der^{\theta} A^{\frac{1}{2}-\theta}_{\varepsilon} u_{\varepsilon}\|_{L^{p}(\R_+,w_{\a};X)}^p
&\stackrel{\eqref{eq:boundedness_Co_partial}}{\lesssim} \E\|\Co^{\theta}_{\varepsilon} A^{\frac{1}{2}-\theta}_{\varepsilon} u_{\varepsilon}\|_{L^{p}(\R_+,w_{\a};X)}^p
\\
&\stackrel{\eqref{eq:stochastic_proof_equality}}{=}
\E\|A^{\frac{1}{2}-\theta}_{\varepsilon}S_{\varepsilon,\theta}\diamond G\|_{L^{p}(\R_+,w_{\a};X)}^p \\
&\stackrel{\eqref{eq:estimate_SMR_homongeneous}}{\lesssim} \E\|G\|^p_{L^{p}(\R_+,w_{\a};\g(H,X))}
\end{align*}
again with uniform estimates in $\varepsilon$.

{\em Step 3: Weak convergence argument.}
It follows from the previous step that $A^{\frac{1}{2}-\theta}_{\varepsilon} u_{\varepsilon}$ is bounded in $L^p(\Omega;\dot{\Do}(\Der^{\theta}))$, and hence there exists a sequence such that $A^{\frac{1}{2}-\theta}_{\varepsilon} u_{\varepsilon_n}\to v$ weakly in $\dot{\Do}(\Der^{\theta})$,
and
\begin{align*}
\E\|\partial_t^{\theta} v\|_{L^p(\R_+,w_{\a};X)}^p
\leq \liminf_{n\to \infty}\E\|\Der^{\theta} A^{\frac{1}{2}-\theta}_{\varepsilon_n} u_{\varepsilon_n}\|_{L^{p}(\R_+,w_{\a};X)}^p
\lesssim \E\|G\|_{L^p(\R_+,w_{\a};\g(H,X))}^p,
\end{align*}
where we used our uniform estimate in the last step.
To complete the proof of \eqref{eq:stochastic_convolution_space_time_estimate} it remains to show $v = A^{\frac{1}{2}-\theta} u$. Fix $T\in (0,\infty)$. By taking restrictions (see \eqref{eq:DerTDer}) it follows that
$A^{\frac{1}{2}-\theta}_{\varepsilon_n} u_{\varepsilon_n}\to v$ weakly in $L^p(\Omega;\dot{\Do}(\Der^{\theta}_T))$. Since $\Der^{\theta}_T$ is invertible, this implies
$A^{\frac{1}{2}-\theta}_{\varepsilon_n} u_{\varepsilon_n}\to v$ weakly in $L^p(\Omega;L^p(0,T,w_{\a};X))$. By Step 1, $v = A^{\frac{1}{2}-\theta} u$ on $(0,T)$ as required.

{\em Step 3: The trace estimate.}
By density it suffices to consider $G$ to be a uniformly bounded progressively measurable step process $G: \R_+\times \O\to \g(H,\Do(A)\cap \Ran(A))$. Note that by the previous step $A^{1/2} u\in L^p(\Omega;L^p(\R_+,w_{\a};X))$. Moreover, due to the fact that $G$ also takes value in $\Do(A^{-\sigma})$  for all $\sigma\in [0,1/2)$ (see \cite[Proposition 15.26]{KuWe}), we additionally have
\begin{equation}\label{eq:uLpaswell}
 A^{\frac12-\sigma} u = A^{\frac12}  A^{-\sigma}u\in L^p(\Omega;L^p(\R_+,w_{\a};X)).
\end{equation}
Fix $\sigma\in (\frac{1+\a}{p},\frac{1}{2})$.
By \eqref{eq:stochastic_convolution_space_time_estimate} we have
\begin{align}\label{eq:reguSEE}
\partial_t^{\sigma} u \in L^p(\O; L^p(\R_+,w_{\a};\Dd(A^{\frac{1}{2}-\sigma}))) \ \ \text{and} \ \
 u \in L^p(\O; L^p(\R_+,w_{\a};\Dd(A^{\frac{1}{2}})))
\end{align}
with corresponding estimates in terms of $\|G\|^p_{L^p(\Omega;L^{p}(\R_+,w_{\a};\g(H,X)))}$. Moreover, combining the latter with \eqref{eq:uLpaswell} and Proposition \ref{prop:Hinftyder}, we obtain that
\begin{align*}
u & \in  L^p(\O;\Hz^{\sigma,p}(\R_+,w_{\a};\Dd(A^{\frac{1}{2}-\sigma})))
  \hookrightarrow L^p(\O;H^{\sigma,p}(\R_+,w_{\a};\Dd(A^{\frac{1}{2}-\sigma}))).
\end{align*}
Applying, Corollary \ref{c:t:trace_FF_FL_spaces;H-hom} pointwise in $\O$, and using  \eqref{eq:reguSEE} and \[(\Dd(A^{\frac{1}{2}-\sigma}),\Dd(A^{\frac{1}{2}}))_{1-\frac{1+\a}{p\sigma},p} \stackrel{\eqref{eq:Dd_A_reiteration}}{=} \Dd_{A}(\tfrac{1}{2}-\tfrac{1+\a}{p},p),
\]
one gets the required estimates for the $C_{\rm{b}}$-norm. The estimate for the $C_{{\rm{b}},\a/p}$-norm follows similarly.
\end{proof}

If $0\in \rho(A)$ in Theorem \ref{t:stochastic_maximal}, then \eqref{eq:stochastic_convolution_space_time_estimate} can be improved into
\begin{equation*}
\E\|S\diamond G\|_{H^{\theta,p}(\R_+,w_{\a};\Do(A^{\frac{1}{2}-\theta}))}^p
\lesssim \E\|G\|_{L^p(\R_+,w_{\a};\g(H,X))}^p,
\end{equation*}
and therefore $\dot{\Do}_A$ can be replaced by $\Do_A$ in the final estimate of Theorem \ref{t:stochastic_maximal}. The same holds if one considers $(0,T)$ with $T\in (0,\infty)$ instead of $\R_+$.

As an application we derive a \emph{new} maximal estimate for the stochastic heat equation using homogenous spaces.
In the following, $(w_t^n:t\geq 0)_{n\geq 1}$ denotes a sequence of independent standard Brownian motions. This determines a cylindrical Brownian motion on $\ell^2$. We will use that $\gamma(\ell^2;L^q(\R^d)) = L^q(\R^d;\ell^2)$ (see \cite[Proposition 2.6]{NVW1}).

\begin{example}[Stochastic heat equation on $\R^d$]
\label{ex:stochastic_fractional}
Let $q\in [2,\infty)$, $p\in (2,\infty)$ and $\a\in [0,\frac{p}{2}-1)$. The stochastic heat equation on $\R^d$ can be formally written as
\begin{equation}
\label{eq:fractional_heat_stochastic}
\begin{cases}
du -\Delta u dt=\sum_{n\geq 1} g_n dw^n_t,\qquad t>0,\\
u(0)=0.
\end{cases}
\end{equation}
Using the notation introduced in Example \ref{ex:double_fractional}, we recast \eqref{eq:fractional_heat_stochastic} in the form \eqref{eq:stochastic_evolution_equation} with $A = -\Delta$ on $L^q(\R^d)$. Then $A$ has a bounded $H^{\infty}$-calculus of angle $0$ and $\Dd(A^{\eta})$ and $\Dd_{A}(\eta,q)$ are a special case of \eqref{eq:description_Dd_fractional_laplacian} for $\beta=1$.
Thus, Theorem \ref{t:stochastic_maximal} ensures that for any progressively measurable $(g_n)_{n\geq 1}\in L^p( \O;L^p(\R_+,w_{\a};L^q(\R^d;\ell^2)))$
there exists a unique mild solution $u$ to \eqref{eq:fractional_heat_stochastic}.
Moreover, for all $\theta\in [0,1/2)$,
\begin{align*}
\E\|\partial_t^{\theta} u&\|_{L^p(\R_+,w_{\a};\dot{H}^{1-2\theta,q}(\R^d))}^p  +\E\|u\|_{C_{\rm{b}}([0,\infty);\dot{B}^{\alpha}_{q,p}(\R^d))}^p \\ & +  \E\|u\|_{C_{{\rm{b}},\a/p}(\R_+;\dot{B}^{\beta}_{q,p}(\R^d))}^p \lesssim \E \|(g_n)_{n\geq 1}\|_{L^p(\R_+,w_{\a};L^q(\R^d;\ell^2))}^p,
\end{align*}
where $\alpha= 1-2\frac{1+\a}{p}$ and $\beta = 1-\frac{2}{p}$.
\end{example}

\def\polhk#1{\setbox0=\hbox{#1}{\ooalign{\hidewidth
  \lower1.5ex\hbox{`}\hidewidth\crcr\unhbox0}}} \def\cprime{$'$}

\end{document}